\newtheorem{thm}{Theorem}[section]
\theoremstyle{definition}
\newtheorem{dfn}{Definition}
\theoremstyle{remark}
\newtheorem{rmk}{Remark}
\numberwithin{equation}{section}
\newtheorem{proposition}{Proposition}[section]
\newtheorem{example}{Example}[section]
\newtheorem{corollary}[thm]{Corollary}
\newcommand{\OIn}[1]{\mathrm{O}\mathbb{I}(#1)}
\newcommand{\OIpn}[1]{\mathrm{O}\mathbb{I}_{\mathrm{p}}(#1)}
\newcommand{\rmd}{\mathrm{d}}
\begin{document}
\title[Differential geometry of isotropic invariant surfaces]{Differential geometry of invariant surfaces in simply isotropic and pseudo-isotropic spaces} %Proper Full Title
\author[Luiz C. B. da~Silva] %Author's name in Running Heads
       {Luiz C. B. da Silva}
\address[Luiz C. B. da~Silva]
        {Department of Physics of Complex Systems\\
         Weizmann Institute of Science\\
         Rehovot 7610001, Israel}
\email{luiz.da-silva@weizmann.ac.il}
%\urladdr{http://www.math.okayama-u.ac.jp/mjou/index.html}
%\moreinfo[fax]{xxx-yyy-zzz}
%\curraddr{Other University\\
%          Other address}

%\subjclass{Primary 00-01; Secondary 68-01}
\subjclass{Primary 53A35; Secondary 53A10; 53A40}
\keywords{Simply isotropic space, pseudo-isotropic space, singular metric, invariant surface, prescribed Gaussian curvature, prescribed mean curvature}
%\thanks{This research was partially supported by ...}
%\thanks{The first author was supported by ...}

%\dedicatory{dedicated to ...}
\begin{abstract}
We study invariant surfaces generated by one-parameter subgroups of simply and pseudo isotropic rigid motions. Basically, the simply and pseudo isotropic geometries are the study of a three-dimensional space equipped with a rank 2 metric of index zero and one, respectively. We show that the one-parameter subgroups of isotropic rigid motions lead to seven types of invariant surfaces, which then gene\-ra\-lizes the study of revolution and helicoidal surfaces in Euclidean and Lorentzian spaces to the context of singular metrics. After compu\-ting the two fundamental forms of these surfaces and their Gaussian and mean curvatures, we solve the corresponding problem of prescribed cur\-vature for invariant surfaces whose generating curves lie on a plane containing the degenerated direction. 
\end{abstract}
\maketitle
\section{Introduction}

Needless to say, there is a great interest in manifolds equipped with a metric which is not necessarily positive definite. However, less attention has been paid to cases where the metric is allowed to be degenerated. Fortunately, there has been a recent and growing interest in the geometry of spaces equipped with a degenerated metric from both applied and pure mathematical viewpoints. In this work, we study simply isotropic $\mathbb{I}^3$ and pseudo-isotropic $\mathbb{I}_{\mathrm{p}}^3$ spaces  which   are the affine space $\mathbb{R}^3$ equipped with the degenerated metric $\rmd s^2 = \rmd x^2+ \rmd y^2$ and $\rmd s^2 = \rmd x^2- \rmd y^2$, respectively. The study of $\mathbb{I}^3$ has been initiated by the Austrian geometer Karl Strubecker in the 1930's \cite{StrubeckerSOA1941,StrubeckerMZ1942,StrubeckerMZ1942b,StrubeckerMZ1944,StrubeckerMZ1949} (see also \cite{Sachs1990} and references therein), while that of $\mathbb{I}^3_{\mathrm{p}}$ began only recently \cite{AydinTJM2018,daSilva2018IsoGaussMap}. Besides its mathematical interest  \cite{AydinJG2015,AydinTJM2018,KaracanTJM2017,YoonJG2017,YoonSymmetry2016}, see also the recent contributions by this Author \cite{daSilvaArXivIso2017,daSilva2018IsoGaussMap}, isotropic geometry finds applications in economics \cite{AydinTJM2016,chenKJM2014}, image processing \cite{koenderink2002image}, and shape interrogation \cite{PottmannCAGD1994}. In addition, this theory may prove useful in understanding the geometry of surfaces with zero mean curvature in semi-Riemannian spaces \cite{satoMJOU2019,satoArXiv2018}.

Here we are mainly interested in the geometry of invariant surfaces, which are generated by the action of 1-parameter subgroups of simply and pseudo isotropic rigid motions. Consequently, our investigation generalizes the study of revolution and helicoidal surfaces in Euclidean and Lorentzian geometries to the context of spaces with a singular metric. The enumeration of 1-parameter subgroups of rigid motions in $\mathbb{I}^3$ has been already done by Strubecker \cite{StrubeckerSOA1941}, see also Chap. 2 of \cite{Sachs1990}. However, in $\mathbb{I}^3$ much attention has been paid only to  helicoidal and revolution surfaces, \cite{YoonJG2017} and \cite{AydinAKUJSE2016,KaracanTJM2017,YoonSymmetry2016}, respectively, while revolution surfaces in $\mathbb{I}_{\mathrm{p}}^3$ were studied in \cite{AydinTJM2018}. Here we revisit the 1-parameter subgroups of simply isotropic rigid motions by exploiting their linear representation in the group of invertible matrices $GL(4,\mathbb{R})$, which we believe offers the advantage of being simpler and easier to follow. We also study 1-parameter subgroups of pseudo-isotropic rigid motions and the invariant surfaces generated by them. After listing all invariant surfaces, which are divided into 7 basic types, we compute their mean and Gaussian curvatures and we also solve the problem of prescribed  curvatures for the so-called  invariant surfaces of i-type (see Definition \ref{def::InvSurfniAndiType}; for the ni-type we solve the prescribed Gaussian curvature problem for helicoidal surfaces only). These findings generalize the study of  helicoidal surfaces in $\mathbb{I}^3$ \cite{AydinAKUJSE2016,KaracanTJM2017} and  revolution surfaces in $\mathbb{I}_{\mathrm{p}}^3$
\cite{AydinTJM2018} with constant curvatures.

This work is divided as follows. In Sect. \ref{sec::Preliminaries} we present background material: definition of isotropic spaces in Subsect. \ref{subsec::SimplyAndPseudoIsoSpaces}; isotropic surfaces in Subsect. \ref{subsec::IsotropicSurf}; and the notion of invariant surfaces in Subsect. \ref{subsec::1ParGroupAndInvSurf}. In Sect. \ref{sec::1ParSubgroupIsotropicSymm} we describe 1-parameter subgroups of simply (Subsect. \ref{subsec::1ParSGSimplyIso}) and of pseudo (Subsect. \ref{subsec::1ParSGPseudoIso}) isotropic isometries, while in Sect. \ref{sec::SimplyPseudoIsoInvSurf} we describe their invariant surfaces. 
%$\mathbb{I}^3$ invariant surfaces in Subsect. \ref{subsec::SimplyInvSurf} and $\mathbb{I}^3_{\mathrm{p}}$ invariant surfaces in Subsect. \ref{subsec::PseudoInvSurf}. 
Finally, in Sect. \ref{sec::DGSimplyInvSurf} and \ref{sec::DGPseudoInvSurf} we study the geometry of simply and pseudo isotropic invariant surfaces, respectively, and solve the corresponding prescribed curvature problems. 

We mention that here we shall follow the Einstein convention of summing on repeated indexes, e.g., $A_i^k\mathbf{x}_k:=\sum_{k=1}^2A_i^k\mathbf{x}_k$. Also, most of the deductions in $\mathbb{I}_{\mathrm{p}}^3$ are omitted, the reader is then referred to the corresponding results in $\mathbb{I}^3$ to devise a proof.

\section{Preliminaries}
\label{sec::Preliminaries}

Simply and pseudo isotropic  spaces are examples of \textit{Cayley-Klein geometries} \cite{Giering1982}, i.e., the study of properties in projective space $\mathbb{P}^3$ invariant by the action of the projectivities that fix the so-called \textit{absolute figure}, which for our isotropic geometries are given by a plane at infinity, identified with $\omega:x_0=0$, and a degenerate quadric of index zero or one, identified with $Q:x_0^2+x_1^2+\delta\,x_2^2=0$: $\delta=1$ for the $\mathbb{I}^3$ absolute figure \cite{Sachs1990,StrubeckerSOA1941} and $\delta=-1$ for the $\mathbb{I}_{\mathrm{p}}^3$ one \cite{daSilvaArXivIso2017}. (Euclidean and Lorentzian geometries stand for the choice of $\omega:x_0=0$ and $Q:x_0^2+x_1^2+x_2^2+\delta\,x_3^2=0,\,\delta=\pm1$.)

\subsection{Simply and pseudo isotropic three-dimensional spaces}
\label{subsec::SimplyAndPseudoIsoSpaces}

In the Cayley-Klein framework, the \textit{simply isotropic} $\mathbb{I}^3$ and \textit{pseudo-isotropic} $\mathbb{I}_{\mathrm{p}}^3$ geometries are the study of those properties in $\mathbb{R}^3$ invariant by the action of the 6-parameter groups $\mathcal{B}_6$ \cite{Sachs1990,StrubeckerSOA1941} and $\mathcal{B}_6^{\mathrm{p}}$ \cite{daSilvaArXivIso2017} given by
\begin{equation}
\left\{
\begin{array}{cc}
\bar{x}  = & a + x\cos\phi-y\sin\phi\\ 
\bar{y}  = & b + x\sin\phi+y\cos\phi\\
\bar{z}  = & c + c_1x+c_2y+z\\
\end{array}
\right.\mbox{and}\left\{
\begin{array}{cc}
\bar{x}  = & a + x\cosh\phi+y\sinh\phi\\ 
\bar{y}  = & b + x\sinh\phi+y\cosh\phi\\
\bar{z}  = & c + c_1x+c_2y+z\\
\end{array}
\right.,\label{eq::SemiIsoGroupSB6}
\end{equation}
respectively, where $a,b,c,c_1,c_2,\phi\in\mathbb{R}$. 

On the $xy$ plane $\mathbb{I}^3$ and $\mathbb{I}_{\mathrm{p}}^2$ look just like the Euclidean $\mathbb{E}^2$ and Lorentzian $\mathbb{E}_1^2$ plane geometries. The projection of a vector $\mathbf{u}=(u^1,u^2,u^3)$ on the $xy$ plane  is called the \emph{top view} of $\mathbf{u}$ and it is denoted by $\widetilde{\mathbf{u}}=(u^1,u^2,0)$. Notice that the $z$-direction is preserved by the action of $\mathcal{B}_6$ and $\mathcal{B}_6^{\mathrm{p}}$. A line $P+t(0,0,a)$ is an \emph{isotropic line} and a plane containing an isotropic line is  an \emph{isotropic plane}.

One may respectively introduce a \emph{simply isotropic} and a \emph{pseudo-isotropic inner products} between two vectors $\mathbf{u}=(u^1,u^2,u^3)$ and $\mathbf{v}=(v^1,v^2,v^3)$ as
\begin{equation}
\langle\mathbf{u},\mathbf{v}\rangle_{z} = u^1v^1+u^2v^2
\mbox{ and }
\langle\mathbf{u},\mathbf{v}\rangle_{pz} = u^1v^1-u^2v^2.
\end{equation}
These inner products induce in $\mathbb{I}^3$ and $\mathbb{I}_{\mathrm{p}}^3$ a (semi) norm in a natural way: 
\begin{equation}
\Vert \mathbf{u}\Vert_z = \sqrt{\langle\mathbf{u},\mathbf{u}\rangle_z} = \Vert\widetilde{\mathbf{u}}\Vert
\mbox{ and }
\Vert \mathbf{u}\Vert_{pz} = \sqrt{\langle\mathbf{u},\mathbf{u}\rangle_{pz}} = \Vert\widetilde{\mathbf{u}}\Vert_1,
\end{equation}
respectively: $\Vert\cdot\Vert$ and $\Vert\cdot\Vert_1$ are the norms in $\mathbb{E}^3$ and $\mathbb{E}_1^3$ induced by
\begin{equation}\label{eq::EuclLorInnerProd}
\langle \mathbf{u},\mathbf{v}\rangle=u^1v^1+u^2v^2+u^3v^3
\mbox{ and }
\langle \mathbf{u},\mathbf{v}\rangle_1=u^1v^1-u^2v^2+u^3v^3.
\end{equation}
In addition, the corresponding vector products in $\mathbb{E}^3$ and $\mathbb{E}_1^3$ shall be denoted here by $\times$ and $\times_1$, respectively: notice that $\times_1$ should be computed as $$\mathbf{u}\times_1\mathbf{v}=(u^2v^3-u^3v^2,u^1v^3-u^3v^1,u^1v^2-u^2v^1).$$

The isotropic inner products are both degenerated: for any $\mathbf{u}=(0,0,a)$ we have $\mbox{span}\{\mathbf{u}\}^{\perp}=\mathbb{I}^3$ (or $\mathbb{I}_{\mathrm{p}}^3$). To exert some control on isotropic vectors, we  introduce the \textit{co-metric} $\langle\langle\mathbf{u},\mathbf{v}\rangle\rangle=u^3v^3$ coming from the \textit{codistance} 
$\mathrm{cd}(\mathbf{u},\mathbf{v})=\vert b^3-a^3\vert$, which is preserved by $\mathcal{B}_6$ and $\mathcal{B}_6^{\mathrm{p}}$ only when applied to isotropic vectors. In short, we think of $\mathbb{I}^3$ and $\mathbb{I}_{\mathrm{p}}^3$ as $\mathbb{R}^3$ equipped with an ``hierarchy" of metrics: when a metric fails to ``control" tangent vectors, we descend one level and start to use the next one.

\begin{rmk}
The hierarchy of metrics $(\langle\cdot,\cdot\rangle_z,\,\langle\langle\cdot,\cdot\rangle\rangle)$ in $\mathbb{I}^3$ and $\mathbb{I}_{\mathrm{p}}^3$ should not be confused with a single metric defined by parts, since a function like $G(\mathbf{u},\mathbf{v})=
\langle\mathbf{u},\mathbf{v}\rangle_z$, if $\tilde{\mathbf{u}}\not=0$ or $\tilde{\mathbf{u}}\not=0$, and $G(\mathbf{u},\mathbf{v})=
\langle\langle\mathbf{u},\mathbf{v}\rangle\rangle$, if $\tilde{\mathbf{u}}=\tilde{\mathbf{v}}=0$, is not bilinear and \emph{can not} be a metric. For example, we have $G((1,0,1)+(0,0,1),(0,0,1))=0$, but $G((1,0,1),(0,0,1))+G(0,0,1),(0,0,1))=1\not=0$.
\end{rmk}

\subsection{Simply and pseudo isotropic surfaces}
\label{subsec::IsotropicSurf}

When dealing with surfaces $M^2$ in isotropic geometry we must distinguish between two cases. We say that $M^2$ is an \textit{admissible surface} when the metric in $M^2$ induced by $\langle\cdot,\cdot\rangle_z$, or $\langle\cdot,\cdot\rangle_{pz}$, has rank 2. Otherwise, it is not admissible. If $M^2$ is parameterized by a $C^2$ map $\mathbf{x}(u^1,u^2)=(x^1(u^1,u^2),x^2(u^1,u^2),x^3(u^1,u^2))$, then it is admissible if, and only if, 
$X_{12}=x_1^1x_2^2-x_2^1x_1^2\not=0,$ 
where $x_k^i=\partial x^i/\partial u^k$ and $X_{12}$ comes from the notation
$$
X_{ij}=\det\left(
\begin{array}{cc}
x_1^i & x_1^j \\[3pt]
x_2^i & x_2^j \\
\end{array}
\right),\,X=\left(
\begin{array}{ccc}
x_1^1 & x_1^2 & x_1^3\\[3pt]
x_2^1 & x_2^2 & x_2^3\\
\end{array}
\right).\label{def::DefXandXij}
$$
As a consequence, every admissible $C^2$ surface $M^2$ can be locally parameterized as $\mathbf{x}(u^1,u^2)=(u^1,u^2,Z(u^1,u^2))$: we say that $M$ is in its \textit{normal form}. 

\emph{Simply isotropic} and \emph{pseudo-isotropic spheres} are connected and irreducible surfaces of degree 2 given by the 4-parameter family 
\begin{equation}
x^2+\sigma y^2+2c_1x+2c_2y+2c_3z+c_4=0,\,c_i\in\mathbb{R},\label{eq::GenericIsotropicSpheres}
\end{equation}
where $\sigma\in\{-1,+1\}$: $\sigma=+1$ in $\mathbb{I}^3$ \cite{Sachs1990} and $\sigma=-1$ in $\mathbb{I}_{\mathrm{p}}^3$ \cite{daSilvaArXivIso2017}.  Up to a rigid motion, we can express an isotropic sphere in one of the two normal forms below:
\newline
(1) \emph{Spheres of parabolic type}:
$$
\Sigma^2(p):z = -\frac{1}{2p}(x^2+y^2)+\frac{p}{2}\,\mbox{ in }\mathbb{I}^3,\,\Sigma_1^2(p):
z = -\frac{1}{2p}(x^2-y^2)+\frac{p}{2}\,\mbox{ in }\mathbb{I}_{\mathrm{p}}^3, \label{eq::DefParabolicSphere}
$$
where $p\not=0$ is the radius of the sphere and it is invariant under rigid motions (the translation by $p/2$ above, if we compare our normal form with that of Sachs \cite{Sachs1990} in $\mathbb{I}^3$, is to guarantee that $\Sigma^2(p)$ and $\Sigma_1^2(p)$ have their foci located at the origin of the coordinate system); and
\newline
(2) \emph{Spheres of cylindrical type}: 
 $
x^2+y^2=r^2\,\mbox{ in }\mathbb{I}^3\mbox{ and }
x^2-y^2=\pm\, r^2\,\mbox{ in }\mathbb{I}_{\mathrm{p}}^3,
$ where the radius $r>0$ is a constant and invariant under rigid motions.

Only spheres of parabolic type are admissible and we use them to introduce Gauss maps for surfaces $M^2$ in $\mathbb{I}^3$ and $\mathbb{I}_{\mathrm{p}}^3$ \cite{daSilva2018IsoGaussMap}: the normal with respect to $\langle\cdot,\cdot\rangle_z$, or  $\langle\cdot,\cdot\rangle_{pz}$, is necessarily the co-unit vector field $\mathcal{N}=(0,0,\pm1)$ and, needless to say, a constant Gauss map adds nothing interesting to the theory.

The \emph{first fundamental form} of $\mathbf{x}:\mathcal{U}\to M^2\subset \mathbb{I}^3$ (or $\mathbb{I}_{\mathrm{p}}^3$) is defined as usual:
\begin{equation}
\mathrm{I}=g_{ij}\rmd u^i\rmd u^j,\,g_{ij} = \langle\mathbf{x}_i,\mathbf{x}_j\rangle_z\mbox{ or }g_{ij} = \langle\mathbf{x}_i,\mathbf{x}_j\rangle_{pz}.
\end{equation}
When $M^2$ is parameterized in its normal form, $\mathrm{I}=g_{ij}\rmd u^i\rmd u^j$ takes a simpler form
\begin{equation}\label{eq::1stFFinNormalForm}
\mathrm{I} = (\rmd u^1)^2+(\rmd u^2)^2,\mbox{ in }\mathbb{I}^3,\mbox{ and }\mathrm{I} = (\rmd u^1)^2-(\rmd u^2)^2,\mbox{ in }\mathbb{I}_{\mathrm{p}}^3,
\end{equation}
which, in particular, shows that every admissible surface is flat, i.e., its intrinsic curvature vanishes identically, see Eq. (8.50) of \cite{Sachs1990} in $\mathbb{I}^3$ and Eq. (59) of \cite{daSilva2018IsoGaussMap} in $\mathbb{I}_{\mathrm{p}}^3$. In addition, every admissible surface in $\mathbb{I}_{\mathrm{p}}^3$ is timelike, i.e., $g_{ij}$ is non-degenerated with index 1 \cite{AydinTJM2018,daSilva2018IsoGaussMap}. Regular admissible curves may be either \emph{spacelike}, $\langle\alpha',\alpha'\rangle_{pz}>0$, or \emph{timelike}, $\langle\alpha',\alpha'\rangle_{pz}<0$, while \emph{lightlike} curves, $\langle\alpha',\alpha'\rangle_{pz}=0$, are non-admissible \cite{daSilvaArXivIso2017}.

The unit spheres of parabolic type $\Sigma^2$ and $\Sigma_1^2$ below will play a role in isotropic geometries similar to that of  $\mathbb{S}^2$ in Euclidean geometry $\mathbb{E}^3$ and of $\mathbb{S}_1^2$ in Lorentz-Minkowski geometry $\mathbb{E}^3_1$. Indeed, fixing the unit spheres
\begin{equation}\label{eq::UnitSphPar}
\Sigma^2:z = -\frac{1}{2}(x^2+y^2)+\frac{1}{2}\mbox{ and }
\Sigma_1^2:z= -\frac{1}{2}(x^2-y^2)+\frac{1}{2},
\end{equation} 
the \emph{Gauss maps} $\xi:M^2\subset\mathbb{I}^3\to\Sigma^2$ and $\xi^{\mathrm{p}}:M^2\subset\mathbb{I}_{\mathrm{p}}^3\to\Sigma_1^2$ are defined as
\begin{equation}
\xi(u^1,u^2) = \frac{X_{23}}{X_{12}}\mathbf{e}_1+\frac{X_{31}}{X_{12}}\mathbf{e}_2+\frac{1}{2}\left\{1-\left[\left(\frac{X_{23}}{X_{12}}\right)^2+\left(\frac{X_{31}}{X_{12}}\right)^2\right]\right\}\mathbf{e}_3,\label{def::IsoGaussMap}
\end{equation}
 and  
\begin{equation}
\xi^{\mathrm{p}}(u^1,u^2) = \frac{X_{23}}{X_{12}}\mathbf{e}_1+\frac{X_{13}}{X_{12}}\mathbf{e}_2+\frac{1}{2}\left\{1-\left[\left(\frac{X_{23}}{X_{12}}\right)^2-\left(\frac{X_{13}}{X_{12}}\right)^2\right]\right\}\mathbf{e}_3,\label{def::PseudoIsoGaussMap}
\end{equation}
where $\{\mathbf{e}_i\}_{i=1}^3$ is the canonical basis of the vector space $\mathbb{R}^3$.

In $\mathbb{E}^3$ and $\mathbb{E}_1^3$ the Gauss maps are $\xi_{eucl}=\mathbf{x}_1\times\mathbf{x}_2\,\Vert\mathbf{x}_1\times\mathbf{x}_2\Vert^{-1}$ and $\xi_{lor}=\mathbf{x}_1\times_1\mathbf{x}_2\,\Vert\mathbf{x}_1\times_1\mathbf{x}_2\Vert_1^{-1}$, respectively. Note that the top view of $\xi$ above coincides with that of the \emph{relative normal}
\begin{equation}
N_h=\frac{\mathbf{x}_1\times\mathbf{x}_2}{\Vert\widetilde{\mathbf{x}}_1\times\widetilde{\mathbf{x}}_2\Vert}=\frac{X_{23}}{X_{12}}\mathbf{e}_1+\frac{X_{31}}{X_{12}}\mathbf{e}_2+\mathbf{e}_3.\label{eq::DefIsoNh}
\end{equation}
(The $z$-coordinate of $\xi$ was then adjusted to give $\xi\circ\mathbf{x}\in\Sigma$.) In $\mathbb{I}_{\mathrm{p}}^3$, we have
\begin{equation}
N_h=\frac{\mathbf{x}_1\times_1\mathbf{x}_2}{\Vert\widetilde{\mathbf{x}}_1\times_1\widetilde{\mathbf{x}}_2\Vert_1}=\frac{X_{23}}{X_{12}}\mathbf{e}_1+\frac{X_{13}}{X_{12}}\mathbf{e}_2+\mathbf{e}_3\Rightarrow \widetilde{N}_h=\widetilde{\xi}^{\mathrm{p}}.\label{eq::DefPseudoNh}
\end{equation}

The \emph{isotropic shape operator} $L_q:T_qM^2\to T_qM^2$ is defined as
\begin{equation}
L_q(w_q)=-D_{w_q}\xi \mbox{ or }
L_q(w_q)=-D_{w_q}\,\xi^{\mathrm{p}},
\end{equation}
where $D$ denotes the usual (flat connection) directional derivative in $\mathbb{R}^3$. %, i.e., if $\gamma(0)=q$ and $\gamma'(0)=w_q$, then $D_{w_q}\xi=(\xi\circ\gamma)'(0)$.
The shape operator $L_q$ maps $T_qM^2$ in $T_{\xi(q)}\Sigma^2$, but as in Euclidean space, $T_qM^2$ and $T_{\xi(q)}\Sigma^2$ are parallel and, therefore, they can be identified. The same reasoning applies to $\mathbb{I}_{\mathrm{p}}^3$ \cite{daSilva2018IsoGaussMap}. (In other words, $\xi$ is an equiaffine transversally vector field on $M^2$ \cite{Nomizu}.)

The \emph{second fundamental form} $\mathrm{II}$ in simply and pseudo-isotropic spaces is
\begin{equation}
\mathrm{II}(u_q,v_q) = \mathrm{I}(L_q(u_q),v_q)\Rightarrow \mathrm{II}=h_{ij}\rmd u^i\rmd u^j,\,h_{ij}=\mathrm{II}(\mathbf{x}_i,\mathbf{x}_j).
\end{equation}
The coefficients of $\mathrm{II}$ can be easily computed via $N_h$ in Eqs. (\ref{eq::DefIsoNh}) and (\ref{eq::DefPseudoNh}) \cite{daSilva2018IsoGaussMap}:
\begin{equation}
h_{ij} = \langle N_h,\mathbf{x}_{ij}\rangle\mbox{ in }\mathbb{I}^3\mbox{ and }h_{ij} = \langle N_h,\mathbf{x}_{ij}\rangle_1\mbox{ in }\mathbb{I}^3_{\mathrm{p}},
\end{equation}
where $\langle\cdot,\cdot\rangle$ and $\langle\cdot,\cdot\rangle_1$ are the inner products from $\mathbb{E}^3$ and $\mathbb{E}_1^3$ in Eq. (\ref{eq::EuclLorInnerProd}).

Finally, the \emph{isotropic Gaussian} and \emph{mean curvatures}  are defined as
\begin{equation}
K(q) = \det(L_q)\mbox{ and }H(q)=\frac{1}{2}\mathrm{tr}(L_q)\,\label{def::IsoGaussAndMeanCurv},
\end{equation}
respectively. In addition, if in local coordinates $L_q(\mathbf{x}_i)=-A_i^k\,\mathbf{x}_k$, then
\begin{equation}
h_{ij} = \mathrm{I}(L_q(\mathbf{x}_i),\mathbf{x}_j)=-A_i^k\,\mathrm{I}(\mathbf{x}_k,\mathbf{x}_j) = -A_i^k\,g_{kj}.\label{eq::CoefShapeOpe}
\end{equation}
From this relation it follows that $-A_i^k=g^{kj}h_{ji}$ and, therefore,
\begin{equation}
K = \frac{h_{11}h_{22}-h_{12}^2}{g_{11}g_{22}-g_{12}^2}\,\mbox{ and }\,H = \frac{1}{2}\frac{g_{11}h_{22}-2g_{12}h_{12}+g_{22}h_{11}}{g_{11}g_{22}-g_{12}^2}\,.
\end{equation}

The reader is referred to \cite{daSilva2018IsoGaussMap} for examples and more information about surfaces in isotropic geometries. (The geometry of curves can be found, e.g., in \cite{daSilvaArXivIso2017}.)

\subsection{One-parameter subgroups and invariant surfaces}
\label{subsec::1ParGroupAndInvSurf}

Let $(G,\circ)$ be a (Lie) group. A \textit{1-parameter subgroup} $H$ of $G$ is a subgroup of $G$ such that there exists a surjective continuous group homomorphism $\psi:(\mathbb{R},+)\to (H,\circ)$: $\psi_{r+s}=\psi_r\circ\psi_s, \,\psi_t=\psi(t).
$ Despite the non-uniqueness of $\psi$, it is common to identify $H$ with $\psi$, since $\psi(\mathbb{R})=H$. Sometimes it is useful to work with a linear representation of the  group of interest, i.e., see the elements of $G$ in a subgroup of invertible matrices of $GL(n,\mathbb{R})$. In this case, we have the following useful property

\begin{proposition}\label{prop::DetElemIn1ParSubG}
Let $H$ be a 1-parameter subgroup of $G\subseteq GL(n,\mathbb{R})$ and $h\in H$, then $\det h>0$.
\end{proposition}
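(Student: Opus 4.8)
The plan is to exploit the fact that the determinant is itself a continuous group homomorphism and to pull it back along $\psi$. Since $\psi:(\mathbb{R},+)\to(H,\circ)$ is surjective, I would first write $h=\psi_t$ for some $t\in\mathbb{R}$. The determinant $\det:GL(n,\mathbb{R})\to(\mathbb{R}\setminus\{0\},\cdot)$ is a continuous multiplicative homomorphism, so the composite $f:=\det\circ\,\psi:(\mathbb{R},+)\to(\mathbb{R}\setminus\{0\},\cdot)$ is again a continuous homomorphism, and the whole question reduces to understanding $f$.

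The first, easy step is positivity. For every $n\in\mathbb{N}$ one has $h=\psi_t=(\psi_{t/n})^n$, because $\psi$ turns addition into composition; taking $n=2$ gives $\det h=(\det\psi_{t/2})^2>0$. Equivalently, $f(\mathbb{R})$ is a connected subset of $\mathbb{R}\setminus\{0\}$ containing $f(0)=1$, hence $f(\mathbb{R})\subseteq(0,\infty)$. So $\det h$ is automatically positive, and it remains only to upgrade this to $\det h=1$.

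For the final step I would invoke the classification of continuous characters of $(\mathbb{R},+)$: any continuous homomorphism $f:(\mathbb{R},+)\to((0,\infty),\cdot)$ has the form $f(t)=e^{\lambda t}$ for a single constant $\lambda\in\mathbb{R}$ (pass to $\log f$, a continuous additive endomorphism of $\mathbb{R}$, hence linear). The entire content of the statement is therefore $\lambda=0$, and this is precisely where I expect the difficulty to sit: the abstract hypotheses alone do not force it, so one must feed in extra information about $H$. The cleanest general way is boundedness — if $f(\mathbb{R})$ is bounded (in particular if $H$ is compact, e.g.\ a subgroup of isotropic rotations, where $\psi$ factors through a circle) then $e^{\lambda t}$ is bounded, forcing $\lambda=0$ and $\det h=1$.

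In the situation actually needed here, however, I would bypass the character analysis and read the determinant off the representation directly, since this covers the non-compact generators (translations and boosts) as well. Every element of $\mathcal{B}_6$ and $\mathcal{B}_6^{\mathrm{p}}$, written as a $4\times4$ matrix acting on $(x,y,z,1)$, is block triangular with bottom row $(0,0,0,1)$ and an upper-left $3\times3$ block of determinant $\cos^2\phi+\sin^2\phi=1$ in the simply isotropic case and $\cosh^2\phi-\sinh^2\phi=1$ in the pseudo-isotropic case. Hence $\det g=1$ for \emph{every} $g$ in the ambient group, and a fortiori for every $h\in H$; the positivity computation of the second paragraph then serves as the general, representation-free reason why no orientation-reversing element can ever lie on a one-parameter subgroup.
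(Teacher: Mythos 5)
Your positivity step is precisely the paper's proof: the author normalizes $\psi$ so that $\psi(1)=h$ and writes $h=\psi(\frac{1}{2})^2$, hence $\det h=[\det\psi(\frac{1}{2})]^2>0$ --- the same squaring trick you use with $h=(\psi_{t/2})^2$. The divergence is in what comes next, and there your diagnosis is sharper than the paper itself. The paper's proof stops at $\det h>0$ and never establishes the stated equality $\det h=1$; as you correctly observe, that equality cannot follow from the abstract hypotheses, since $\psi(t)=e^{t}$ in $GL(1,\mathbb{R})$ (or $\mathrm{diag}(e^{t},1,\dots,1)$ in $GL(n,\mathbb{R})$) is a 1-parameter subgroup whose non-identity elements all have determinant different from $1$. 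So the proposition read literally is false, and what the paper actually invokes later is only positivity: it argues that $\det h>0$ forces $\sigma\tau=+1$ in Eq. \eqref{eq::ElementsOfOI3}, i.e., the proposition's sole role is to exclude orientation-reversing isometries from 1-parameter subgroups. Your completion --- checking that every element of $\mathcal{B}_6$ and $\mathcal{B}_6^{\mathrm{p}}$ is block triangular with determinant $\cos^2\phi+\sin^2\phi=1$, resp.\ $\cosh^2\phi-\sinh^2\phi=1$ --- is correct, but note it presupposes $H$ lies in the motion group $\mathcal{B}_6$, whereas the paper applies the proposition to subgroups of the full groups $\mathrm{G}\mathbb{I}(4)$ and $\mathrm{G}\mathbb{I}_{\mathrm{p}}(4)$, which do contain determinant $-1$ elements. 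The formulation best adapted to that application is the one implicit in your last sentence: on these matrix groups $\det$ takes only the values $\sigma\tau=\pm1$, so your positivity step alone already yields $\det h=1$ for any 1-parameter subgroup of the full isometry group. In summary, your proof is correct, its core coincides with the paper's argument, and the extra analysis you supply closes a genuine gap between the proposition's statement and the proof the paper gives for it.
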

\begin{proof}
If $h\in H$, then there exists a  map $\psi$ such that $\psi(1)=h$ (if $\psi(r)=h$, then  $\Psi(s)=\psi(rs)$ defines a new group homomorphism which sends $1$ to $h$). Since
$h=\psi(1)=\psi(\frac{1}{2}+\frac{1}{2})=\psi(\frac{1}{2})^2,
$ we have $\det h=[\det\psi(\frac{1}{2})]^2>0$.
\end{proof}

When $G$ happens to be a group of isometries of a certain geometry, its 1-parameter subgroups $\psi_t$ give rise to invariant surfaces:  a $C^2$ surface $M^2$ is said to be an \textit{invariant surface} when there exists a 1-parameter subgroup $\psi_t$ such that
\begin{equation}\label{eq::DefInvariantSurf}
\forall\,t\in\mathbb{R},\,M = \psi_t(M).
\end{equation}

Intuitively, we can approximate an invariant surface by successive applications, to a given curve $\alpha(s)$, of a certain kind of rigid motion (isometry): $$M\cong\{\dots,\psi_{t_0}(\alpha(s)),\psi_{t_0+\Delta t}(\alpha(s)),\dots,\psi_{t_0+n\Delta t}(\alpha(s)),\dots\}.$$ In the limit $\Delta t\to 0$, we generate $M$ by continuously moving the curve $\alpha$ under the action of  $\psi_t$. We call such $\alpha$ the \textit{generating curve} of $M$ and, for the geometries modeled in $\mathbb{R}^3$, it may be assumed to be planar. Usually, $\alpha$ is obtained by intersecting $M$ with the $xz$- or the $xy$-plane. Notice, if $\alpha(u)$ is the generating curve of  $M$, then one may parameterize $M$ by $\mathbf{x}(u,t)=\psi_t(\alpha(u))$. 

An important feature of invariant surfaces is that the values of geometric quantities, such as the Gaussian and mean curvatures, only depend on their values assumed along the generating curve.

\section{One-parameter subgroups of simply and pseudo isotropic isometries}
\label{sec::1ParSubgroupIsotropicSymm}

Let us introduce the following group of $4\times4$ invertible real matrices
\begin{equation}
\mathrm{G}\mathbb{I}(4)=\{M\in GL(4,\mathbb{R})\,:\,M=
\left(\begin{array}{cc}
A & \mathbf{a} \\
0 & 1 \\
\end{array}\right),\,\mathbf{a}\in\mathbb{I}^3\,,A\in \OIn{3}\},
\end{equation}
where $\OIn{3}$ denotes the set of simply isotropic orthogonal matrices:
\begin{equation}
A\in\OIn{3}\Leftrightarrow \forall\,u,v\in\mathbb{I}^3,\,\left\{
\begin{array}{c}
\langle Au,Av\rangle_z=\langle u,v\rangle_z\\
\textrm{cd}_z(Au,Av)=\textrm{cd}_z(u,v),\,\mbox{if }\Vert u\Vert_z=\Vert v\Vert_z=0\,
\end{array}
\right..
\end{equation}
Note that every element in $\OIn{3}$ can be written as
\begin{equation}\label{eq::ElementsOfOI3}
\left(\begin{array}{ccc}
\cos\phi & -\sigma\sin\phi & 0\\
\sin\phi & \sigma\cos\phi & 0\\
a & b & \tau\\
\end{array}\right),\,\phi,\,a,\,b\in\mathbb{R}\mbox{ and }\sigma,\tau\in\{-1,+1\}.
\end{equation}

Together with translations, $\OIn{3}$ gives us the group of simply isotropic isometries $\mathrm{ISO}(\mathbb{I}^3)$. Now we construct a linear representation $\Psi:\mathrm{ISO}(\mathbb{I}^3)\to \mathrm{G}\mathbb{I}(4)$ for the group $\mathrm{ISO}(\mathbb{I}^3)$. Thus, we can see isometries as  linear transformations by identifying $\mathbb{R}^3$ with a hyperplane in $\mathbb{R}^4$ via the inclusion map $(x,y,z)\in\mathbb{R}^3\mapsto(x,y,z,1)\in\mathbb{R}^4$.

\begin{proposition}
Let $\mathrm{ISO}(\mathbb{I}^3)$ be the group of simply isotropic isometries. Then,
\begin{enumerate}
\item $T\in \mathrm{ISO}(\mathbb{I}^3)\Leftrightarrow T(x)=Ax+\mathbf{a}$, where $A\in\OIn{3}$ and $\mathbf{a}\in\mathbb{I}^3$;
\item The map $\Psi$ below is a group isomorphism:
\begin{equation}
\begin{array}{cccc}
\Psi: & \mathrm{ISO}(\mathbb{I}^3) & \to & \mathrm{G}\mathbb{I}(4)\\
 & T & \mapsto & \left(\begin{array}{cc}
A & \mathbf{a} \\
0 & 1 \\
\end{array}\right)
\end{array}.
\end{equation}
\end{enumerate}
\label{Prop::StructureOfIsoIsometries}
\end{proposition}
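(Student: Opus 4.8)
The plan is to prove both statements by directly unwinding the definitions of simply isotropic isometry and of the matrix product in $\mathrm{G}\mathbb{I}(4)$, verifying that the affine structure forced by item (1) matches exactly the block-triangular form of the representation, so that $\Psi$ is manifestly a well-defined bijective homomorphism.

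For item (1), the forward implication requires showing that any isometry $T$ of $\mathbb{I}^3$ must be affine of the stated form. I would argue that $T$ preserves the (semi)metric $\langle\cdot,\cdot\rangle_z$ together with the codistance on isotropic vectors, and that these together rigidify $T$ up to an affine map: writing $T(x)=A(x)+T(0)$ after subtracting the translation part $a:=T(0)$, the map $A$ fixes the origin and preserves both the degenerate inner product and the codistance, which is precisely the defining property of $\OIn{3}$. The only subtlety here is that the degeneracy of $\langle\cdot,\cdot\rangle_z$ means metric-preservation alone does not pin down the $z$-row of $A$; this is exactly why the codistance condition is built into the definition of $\OIn{3}$ (and why its general element \eqref{eq::ElementsOfOI3} carries the free parameters $a,b,\tau$). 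One should also confirm linearity of $A$ rather than mere affineness — in the non-degenerate Euclidean case this follows from the classical Mazur--Ulam-type argument, and here the same reasoning applies on the top view, with the $z$-component handled by the codistance. The converse implication, that every such $Ax+a$ is an isometry, is the routine direction and follows immediately from $A\in\OIn{3}$.

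For item (2), I would verify the three things that make $\Psi$ an isomorphism. \emph{Well-defined and into} $\mathrm{G}\mathbb{I}(4)$: given $T$ as in item (1), the associated block matrix has $A\in\OIn{3}$ and $a\in\mathbb{I}^3$ by construction, so it lies in $\mathrm{G}\mathbb{I}(4)$. \emph{Homomorphism}: writing $(T_2\circ T_1)(x)=A_2(A_1 x+a_1)+a_2=(A_2A_1)x+(A_2a_1+a_2)$ and comparing with the product
\begin{equation}
\begin{pmatrix} A_2 & a_2 \\ 0 & 1 \end{pmatrix}
\begin{pmatrix} A_1 & a_1 \\ 0 & 1 \end{pmatrix}
=\begin{pmatrix} A_2A_1 & A_2a_1+a_2 \\ 0 & 1 \end{pmatrix},
\end{equation}
shows $\Psi(T_2\circ T_1)=\Psi(T_2)\Psi(T_1)$; here I use that $\OIn{3}$ is closed under multiplication so the upper-left block is again in $\OIn{3}$. \emph{Bijective}: injectivity is clear since $A$ and $a$ are read off uniquely from the matrix, and surjectivity is immediate from the definition of $\mathrm{G}\mathbb{I}(4)$, whose elements are precisely the block matrices with $A\in\OIn{3}$, $a\in\mathbb{I}^3$, each of which is $\Psi$ of the isometry $x\mapsto Ax+a$.

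**The main obstacle** is the forward direction of item (1): establishing that an abstract isometry is necessarily \emph{affine}, since the degeneracy of the metric removes the usual rigidity one exploits in $\mathbb{E}^3$. Everything after that — closure of $\OIn{3}$ under products and the block-matrix bookkeeping — is formal and can be dispatched quickly. I expect the cleanest route is to reduce to the top view, where $\langle\cdot,\cdot\rangle_z$ restricts to the genuine Euclidean metric of $\mathbb{E}^2$ and the classical structure theorem applies, and then to recover the $z$-component from the codistance constraint, which forces the affine dependence $\bar z = c + c_1 x + c_2 y \pm z$ seen in \eqref{eq::SemiIsoGroupSB6} and \eqref{eq::ElementsOfOI3}.
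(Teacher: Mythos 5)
Your treatment of item (2), and of the converse direction of item (1), is correct and essentially identical to the paper's (the paper composes $T_1\circ T_2$ rather than $T_2\circ T_1$; nothing changes). The problem is the forward direction of item (1), exactly the step you flag as the main obstacle: your proposed resolution does not close it. You assert that, once the top view is handled by a Mazur--Ulam-type argument, the codistance ``forces the affine dependence $\bar z=c+c_1x+c_2y\pm z$''. It cannot: the codistance is preserved only for pairs of points whose difference is isotropic, i.e.\ points on the same vertical line, so it constrains the third coordinate of $T$ only along such lines. Writing $T(x,y,z)=(\widetilde{T}(x,y),F(x,y,z))$, preservation of $\mathrm{cd}_z$ gives only $|F(x,y,z)-F(x,y,z')|=|z-z'|$, hence $F(x,y,z)=\pm z+g(x,y)$ with $g$ an a priori arbitrary function of the top-view coordinates. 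Indeed, for any non-affine $g$ the bijection $(x,y,z)\mapsto(x,y,z+g(x,y))$ preserves both isotropic distances and codistances of isotropic pairs, so the two invariants you invoke cannot, by themselves, force the $z$-part to be affine; this is precisely the degeneracy phenomenon that distinguishes $\mathbb{I}^3$ from $\mathbb{E}^3$, and it is where all the work lies.

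By contrast, the paper's proof works with the defect vector $D=A(x+\lambda y)-A(x)-\lambda A(y)$ as a whole: expanding $\langle D,D\rangle_z$ with the polarized inner product gives $\langle D,D\rangle_z=0$, then the analogous vanishing is claimed for the codistance of $D$, and $D=0$ follows because $\langle x,x\rangle_z=0=\mathrm{cd}_z(x,0)$ forces $x=0$. (Even there, the codistance step is the delicate one, since the co-metric is preserved only on isotropic vectors; the robust reading of the proposition is through the paper's stated definition of $\mathrm{ISO}(\mathbb{I}^3)$ as the group generated by translations and $\OIn{3}$, under which item (1) reduces to a closure computation, $(A_1x+a_1)\circ(A_2x+a_2)=A_1A_2x+(A_1a_2+a_1)$ with $A_1A_2\in\OIn{3}$.) In any case, as written your proof is incomplete: the Mazur--Ulam reduction settles only the top view, and you still owe an argument --- not obtainable from the codistance alone --- that the $z$-row of the map is affine.
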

\begin{proof}
(1) Given $T\in \mathrm{ISO}(\mathbb{I}^3)$, define $A(x)=T(x)-T(0)$. Using that $T$ preserves $\langle\cdot,\cdot\rangle_z$, and also $\mathrm{cd}_z(\cdot,\cdot)$ for isotropic vectors, one can deduce that $A\in\OIn{3}$. Indeed, expanding $L=\langle A(x+\lambda y)-A(x)-\lambda A(y),A(x+\lambda y)-A(x)-\lambda A(y)\rangle_z$, and using that $T$ preserves $\langle\cdot,\cdot\rangle_z$, we conclude that $L=0$. Applying the same reasoning to the codistance, we deduce that $A$ is a linear isometry, since $x=0\Leftrightarrow \langle x,x\rangle_z=0=\mathrm{cd}_z(x,0)$. Finally, writing $\mathbf{a}=T(0)$ we get the desired result. Conversely, since by definition any translation and any $A\in\OIn{3}$ give rise to isotropic isometries, every map $Ax+\mathbf{a}$ defines an isotropic rigid motion.

(2) $\Psi$ is clearly injective. Using item 1 we also see that $\Psi$ is surjective. Finally, if $T_i(x)=A_ix+\mathbf{a}_i$, then $(T_1\circ T_2)(x)=(A_1A_2)x+(A_1\mathbf{a}_2+\mathbf{a}_1)$ and 
\begin{eqnarray*}
\Psi(T_1)\Psi(T_2) &=& \left(
\begin{array}{cc}
A_1 & \mathbf{a}_1\\
0 & 1\\
\end{array}
\right)
\left(
\begin{array}{cc}
A_2 & \mathbf{a}_2\\
0 & 1\\
\end{array}
\right)\\
&=&\left(
\begin{array}{cc}
A_1A_2 & (A_1\mathbf{a}_2+\mathbf{a}_1)\\
0 & 1\\
\end{array}
\right)=
\Psi(T_1\circ T_2),
\end{eqnarray*}
 which shows that $\Psi$ is a group isomorphism.
\end{proof}

Notice that the same concepts above apply to rigid motions in  $\mathbb{I}_{\mathrm{p}}^3$. Indeed, define
\begin{equation}
\mathrm{G}\mathbb{I}_p(4)=\{M\in GL(4,\mathbb{R})\,:\,M=
\left(\begin{array}{cc}
A & \mathbf{a} \\
0 & 1 \\
\end{array}\right),\,a\in\mathbb{I}_{\mathrm{p}}^3\,,A\in \OIpn{3}\},
\end{equation}
where $\OIpn{3}$ denotes the set of pseudo-isotropic orthogonal matrices:
\begin{equation}
A\in\OIpn{3}\Leftrightarrow \,\left\{\begin{array}{c}
\langle Au,Av\rangle_{z,p}=\langle u,v\rangle_{z,p}\\
\textrm{cd}_{z,p}(Au,Av)=\textrm{cd}_{pz}(u,v),\,\mbox{if }\Vert u\Vert_{pz}=\Vert v\Vert_{pz}=0
\end{array}
\right..
\end{equation}
Therefore, every element in $\OIpn{3}$ can be written either as
\begin{equation}
\left(\begin{array}{ccc}
\sigma\cosh\phi & \sinh\phi & 0\\
\sinh\phi & \sigma\cosh\phi & 0\\
a & b & \tau\\
\end{array}\right)\,\mbox{or}\,\left(\begin{array}{ccc}
\sigma\cosh\phi & -\sinh\phi & 0\\
\sinh\phi & -\sigma\cosh\phi & 0\\
a & b & \tau\\
\end{array}\right),
\end{equation}
where $\phi,\,a,\,b\in\mathbb{R}$ and $\sigma,\tau\in\{-1,+1\}$. As in $\mathbb{I}^3$, we have in $\mathbb{I}_{\mathrm{p}}^3$ the 

\begin{proposition}
Let $\mathrm{ISO}(\mathbb{I}_{\mathrm{p}}^3)$ be the group of pseudo-isotropic isometries. Then,
\begin{enumerate}
\item $T\in \mathrm{ISO}(\mathbb{I}_{\mathrm{p}}^3)\Leftrightarrow T(x)=Ax+\mathbf{a}$, where $A\in\OIpn{3}$ and $\mathbf{a}\in\mathbb{I}_{\mathrm{p}}^3$;
\item The map $\Psi$ below is a group isomorphism:
\begin{equation}
\begin{array}{cccc}
\Psi: & \mathrm{ISO}(\mathbb{I}_{\mathrm{p}}^3) & \to & \mathrm{G}\mathbb{I}_{\mathrm{p}}(4)\\
 & T & \mapsto & \left(\begin{array}{cc}
A & \mathbf{a} \\
0 & 1 \\
\end{array}\right)
\end{array}.
\end{equation}
\end{enumerate}
\label{Prop::StructureOfPseudoIsoIsometries}
\end{proposition}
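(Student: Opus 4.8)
The plan is to mirror the proof of Proposition~\ref{Prop::StructureOfIsoIsometries} line by line, replacing $\langle\cdot,\cdot\rangle_z$ by $\langle\cdot,\cdot\rangle_{pz}$ throughout, and to isolate the single place where the Lorentzian (index one) nature of the top-view form forces a genuine change. For item (1), given $T\in\mathrm{ISO}(\mathbb{I}_{\mathrm{p}}^3)$ I set $A(x)=T(x)-T(0)$, so that $A(0)=0$ and $A(p)-A(q)=T(p)-T(q)$ for all $p,q$; since $T$ preserves $\langle\cdot,\cdot\rangle_{pz}$ on difference vectors, the special case $q=s=0$ reads $\langle A(p),A(r)\rangle_{pz}=\langle p,r\rangle_{pz}$. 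The whole content is then to prove that $A$ is \emph{linear}; once this is known, $A\in\OIpn{3}$ follows because $A$ preserves both $\langle\cdot,\cdot\rangle_{pz}$ and the codistance on lightlike vectors, and writing $a=T(0)$ gives $T(x)=Ax+a$. The converse is immediate, since by definition every translation and every $A\in\OIpn{3}$ is a pseudo-isotropic isometry.

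For linearity, fix $x,y$ and $\lambda\in\mathbb{R}$ and put $w=A(x+\lambda y)-A(x)-\lambda A(y)$. Exactly as in the simply isotropic case, expanding $L=\langle w,w\rangle_{pz}$ and repeatedly using $\langle A(p),A(r)\rangle_{pz}=\langle p,r\rangle_{pz}$ collapses the expression into $\langle (x+\lambda y)-x-\lambda y,\,(x+\lambda y)-x-\lambda y\rangle_{pz}$, so $L=0$.

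Here lies the main obstacle. In $\mathbb{I}^3$ one finishes by invoking the equivalence $x=0\Leftrightarrow\langle x,x\rangle_z=0=\mathrm{cd}_z(x,0)$, but in $\mathbb{I}_{\mathrm{p}}^3$ this equivalence \emph{fails}: $\langle w,w\rangle_{pz}=0$ only says that the top view $\widetilde{w}$ is lightlike in $\mathbb{E}_1^2$, not that it vanishes. To repair this I would not use the quadratic identity $L=0$ but rather upgrade it to a bilinear one. Since $\langle\cdot,\cdot\rangle_{pz}$ is a genuine bilinear form, the relation $\langle A(p),A(r)\rangle_{pz}=\langle p,r\rangle_{pz}$ gives, for every $r\in\mathbb{I}_{\mathrm{p}}^3$, the identity $\langle w,A(r)\rangle_{pz}=\langle (x+\lambda y)-x-\lambda y,\,r\rangle_{pz}=0$. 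Taking $r=\mathbf{e}_1,\mathbf{e}_2$ and noting that the Lorentzian Gram matrix of $\{\widetilde{A(\mathbf{e}_1)},\widetilde{A(\mathbf{e}_2)}\}$ equals $\mathrm{diag}(1,-1)$, these two top views are linearly independent and hence span the top-view plane; the nondegeneracy of $\langle\cdot,\cdot\rangle_{pz}$ on that plane then forces $\widetilde{w}=0$.

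It remains to kill the $z$-component of $w$, and this step transfers with no change from the proof of Proposition~\ref{Prop::StructureOfIsoIsometries}: the codistance (equivalently the co-metric $\langle\langle u,v\rangle\rangle=u^3v^3$) involves only $z$-coordinates and is therefore blind to the Lorentzian sign, so the very same reasoning applied to the codistance yields $w^3=0$; combined with $\widetilde{w}=0$ this gives $w=0$, i.e.\ $A$ is linear. Finally, item (2) is proved by the identical argument as in $\mathbb{I}^3$: $\Psi$ is injective by inspection, surjective by item (1), and the homomorphism property $\Psi(T_1)\Psi(T_2)=\Psi(T_1\circ T_2)$ is pure $2\times2$-block matrix algebra, insensitive to the isotropic-versus-pseudo-isotropic distinction.
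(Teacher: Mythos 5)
Your repair of the top-view step is correct, and it is genuinely needed. The paper gives no proof for the pseudo-isotropic case (it delegates to Proposition \ref{Prop::StructureOfIsoIsometries}), and the argument there really does break in $\mathbb{I}_{\mathrm{p}}^3$ exactly where you say: $\langle w,w\rangle_{pz}=0$ only makes $\widetilde{w}$ lightlike, so the paper's closing equivalence $x=0\Leftrightarrow\langle x,x\rangle_z=0=\mathrm{cd}(x,0)$ has no pseudo-isotropic analogue (consider $(1,1,0)$). Your replacement --- upgrading the quadratic identity to the bilinear one $\langle w,A(r)\rangle_{pz}=0$ for all $r$, then using that $\widetilde{A(\mathbf{e}_1)},\widetilde{A(\mathbf{e}_2)}$ have Lorentzian Gram matrix $\mathrm{diag}(1,-1)$, hence span the top-view plane on which the form is nondegenerate --- is the standard nondegenerate-form argument, and it would in fact also streamline the simply isotropic proof, which currently leans on positive definiteness. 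Item (2) and the converse of item (1) do transfer verbatim, as you say.

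The genuine gap is in the step you declare to ``transfer with no change'': killing $w^3$ via the codistance. The codistance (equivalently the co-metric $\langle\langle u,v\rangle\rangle=u^3v^3$) is preserved \emph{only when both arguments are isotropic vectors}; that is how the paper states the preservation property, and it is all that is true of the motions in Eq. (\ref{eq::SemiIsoGroupSB6}). But $x$, $y$, $x+\lambda y$ and $\mathbf{e}_1,\mathbf{e}_2$ are not isotropic, so neither the quadratic expansion of $\langle\langle w,w\rangle\rangle$ nor your bilinear upgrade $\langle\langle w,A(r)\rangle\rangle$ can be collapsed: the identities $\langle\langle A(p),A(r)\rangle\rangle=\langle\langle p,r\rangle\rangle$ they would require are simply false for non-isotropic arguments (any shear in $\OIpn{3}$, third row $(c_1,c_2,1)$, already violates them). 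The obstruction is not cosmetic: the map $T(x,y,z)=(x,y,z+f(x,y))$ preserves $\langle\cdot,\cdot\rangle_{pz}$ on all difference vectors and preserves the codistance on all isotropic pairs, for an \emph{arbitrary} function $f$, yet is affine only when $f$ is. So $w^3=0$ is not a consequence of the two stated preservation properties; some further input is indispensable --- most naturally, taking the Cayley--Klein definition literally, so that elements of $\mathrm{ISO}(\mathbb{I}_{\mathrm{p}}^3)$ are affine by fiat (this is what the paper's phrase ``together with translations, $\OIpn{3}$ gives us the group of pseudo-isotropic isometries'' amounts to), after which linearity of $A$ is automatic and the only remaining content is reading off the matrix form of $A$. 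In fairness, this defect is inherited: the codistance step in the paper's own proof of Proposition \ref{Prop::StructureOfIsoIsometries} is equally unjustified. But as written, your $z$-component argument proves nothing, and it is precisely the place where the proposition can fail.
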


\subsection{One-parameter subgroups of simply isotropic isometries}
\label{subsec::1ParSGSimplyIso}

First, notice one may write an arbitrary element $h\in H\subseteq\mathrm{G}\mathbb{I}(4)$ as
\begin{equation}
h=\left(\arraycolsep=4pt\begin{array}{cccc}
\cos\phi & -\sin\phi & 0 & a\\
\sin\phi & \cos\phi & 0 & b \\
c_1 & c_2 & 1 & c\\
0 & 0 & 0 & 1\\
\end{array}\right).\label{eq::ElementsOfGI4}
\end{equation}
Indeed, since $\det h>0$ (Prop. \ref{prop::DetElemIn1ParSubG}), it follows that $\sigma\tau=+1$ in Eq. \eqref{eq::ElementsOfOI3}. In addition, given $\psi:\mathbb{R}\to \mathrm{G}\mathbb{I}(4)$, we may generate a 1-parameter subgroup of plane Euclidean isometries by composing $\psi_t$ with the map
$F$ that associates with any $T\in\mathrm{G}\mathbb{I}(4)$ the $3\times3$ matrix $F(T)=\left(\begin{array}{cc}
B & 0\\
0 & 1\\
\end{array}\right)$, where $B_{ij}=T_{ij}$, $i,j=1,2$. Then, $\sigma=+1$ and, consequently, $\tau=+1$. 

For a simply isotropic rigid motion what happens in the top view plane is  independent from what happens in the isotropic $z$-direction. Then, we may investigate the effect of a 1-parameter subgroup on the top view plane and on the isotropic direction separately. It will follow that the 1-parameter subgroups of simply isotropic isometries are distributed along 7 types, divided into two main categories \cite{Sachs1990,StrubeckerSOA1941}: (a) \emph{helicoidal motions}, which in the isotropic direction act either as a pure translation or as the identity map; and (b) \emph{limit motions} (\textit{Grenzbewegungen} \cite{StrubeckerSOA1941}), which in the top view plane act either as a pure translation or as the identity map.

By noticing that simply isotropic helicoidal motions are in one-to-one correspondence with Euclidean helicoidal motions with $\mathcal{O}z$ as the screw axis, the category of simply isotropic helicoidal motions is given by 
\begin{equation}
t\in\mathbb{R}\mapsto \psi_t=\left(\arraycolsep=4pt\begin{array}{cccc}
\cos(t\phi) & -\sin(t\phi) & 0 & 0\\
\sin(t\phi) & \cos(t\phi) & 0 & 0 \\
0 & 0 & 1 & c\,t\\
0 & 0 & 0 & 1\\
\end{array}\right)\in\mathrm{G}\mathbb{I}(4),
\end{equation}
which can be divided in two classes \cite{Sachs1990,StrubeckerSOA1941}: (I) \emph{Euclidean rotations} in the top view plane when $c=0$; and (II) \emph{helicoidal motions} around an isotropic axis.  

On the other hand, for the category of limit motions we have
\begin{thm}\label{thr::SimplyIsoG51parsubgroup}
The group $G_5$ of simply isotropic limit motions, i.e., motions that in the top view plane act either as a pure translation or as the identity map, leads to the 1-parameter subgroup
\begin{equation}
%t\in\mathbb{R}\mapsto
\psi_t=\left(\arraycolsep=4pt\begin{array}{cccc}
1 & 0 & 0 & a\,t\\[4pt]
0 & 1 & 0 & b\,t \\[4pt]
c_1t & c_2t & 1 & \displaystyle\left[c\,t+(ac_1+bc_2)\frac{t^2}{2}\right]\\[4pt]
0 & 0 & 0 & 1\\[4pt]
\end{array}\right)\in\mathrm{G}\mathbb{I}(4),\,c_i,c,a,b\in\mathbb{R}.\label{eq::Grenzbewegungen}
\end{equation}
\end{thm}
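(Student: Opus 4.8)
The plan is to exploit the linear representation in $\mathrm{G}\mathbb{I}(4)$ and reduce the statement to a small system of functional equations for the entries of $\psi_t$. First I would record what the defining property of $G_5$ imposes at the matrix level. Every element of a $1$-parameter subgroup of $\mathrm{G}\mathbb{I}(4)$ already has the form displayed in Eq.~\eqref{eq::ElementsOfGI4}, where $\sigma=\tau=+1$ was forced by $\det h>0$ (Prop.~\ref{prop::DetElemIn1ParSubG}). Since a limit motion acts in the top view as a pure translation or the identity, its rotational part must be trivial, so $\phi=0$ and the upper-left $2\times 2$ block is the identity. Hence any continuous homomorphism $\psi\colon(\mathbb{R},+)\to G_5$ may be written in that form with $\phi=0$ and with continuous entries $a(t),b(t),c_1(t),c_2(t),c(t)$ vanishing at $t=0$.

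Next I would impose the homomorphism property $\psi_{s+t}=\psi_s\,\psi_t$. Writing each element in block form with lower-triangular unipotent linear part $A$ (off-diagonal entries $c_1,c_2$) and translation $v=(a,b,c)^{\top}$, the product is $\psi_s\psi_t=\bigl(A_sA_t,\,A_sv_t+v_s\bigr)$, and comparing entries yields
\begin{equation*}
a(s+t)=a(s)+a(t),\quad b(s+t)=b(s)+b(t),\quad c_i(s+t)=c_i(s)+c_i(t),
\end{equation*}
together with the coupled relation
\begin{equation*}
c(s+t)=c(s)+c(t)+c_1(s)\,a(t)+c_2(s)\,b(t).
\end{equation*}
Each of the first four is Cauchy's functional equation, so by continuity $a(t)=at$, $b(t)=bt$, $c_1(t)=c_1t$, $c_2(t)=c_2t$ for constants $a,b,c_1,c_2\in\mathbb{R}$; this already reproduces the four linear entries in Eq.~\eqref{eq::Grenzbewegungen}.

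The main obstacle is the last, inhomogeneous equation for $c$. Substituting the linear forms just found turns it into $c(s+t)=c(s)+c(t)+(ac_1+bc_2)\,st$, which I would handle by absorbing the quadratic inhomogeneity: setting $g(t)=c(t)-\tfrac12(ac_1+bc_2)t^2$ and expanding $g(s+t)$ shows the $st$ cross-terms cancel exactly, so $g$ again satisfies Cauchy's equation; continuity then gives $g(t)=ct$ and hence $c(t)=ct+\tfrac12(ac_1+bc_2)t^2$, matching the $(3,4)$ entry of Eq.~\eqref{eq::Grenzbewegungen}. Finally I would note the converse, that the displayed matrix genuinely is a $1$-parameter subgroup; the quickest independent check is that $\psi_t=\exp(tX)$ for the generator $X=\psi'(0)$, whose only nonzero entries are $a,b$ in the last column of the first two rows and $c_1,c_2,c$ in the third row. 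Since $X^3=0$, the exponential truncates to $I+tX+\tfrac{t^2}{2}X^2$, and reading off $(X^2)_{34}=ac_1+bc_2$ recovers the same formula and confirms the homomorphism property.
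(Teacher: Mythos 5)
Your proof is correct, but it takes a genuinely different route from the paper's. The paper fixes $h=\psi(1)\in G_5$ (with $\phi=0$ in Eq.~\eqref{eq::ElementsOfGI4}), computes $\psi(n)=h^n$ explicitly by induction, then $\psi(1/m)$ and $\psi(n/m)$, and finally extends to all real $t$ by continuity and density of $\mathbb{Q}$, absorbing the leftover linear term $-(ac_1+bc_2)t/2$ through an ex post redefinition of the constant $c$. You instead impose the homomorphism identity $\psi_{s+t}=\psi_s\psi_t$ directly on the block form, which decouples into four Cauchy functional equations for $a(t),b(t),c_1(t),c_2(t)$ (solved by continuity) plus one inhomogeneous equation $c(s+t)=c(s)+c(t)+(ac_1+bc_2)st$ for the $(3,4)$-entry, whose quadratic particular solution you isolate by the substitution $g(t)=c(t)-\frac{1}{2}(ac_1+bc_2)t^2$; this makes the $t^2/2$ term emerge naturally rather than by redefinition, and your computation of the product $\psi_s\psi_t=\bigl(A_sA_t,\,A_sv_t+v_s\bigr)$ checks out entry by entry. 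The two arguments rest on the same underlying fact: the paper's integer-to-rational-to-real density argument is exactly the standard proof of Cauchy's equation under continuity, which you invoke as a known lemma, so neither is logically stronger, but yours organizes the bookkeeping more cleanly. You also supply something the paper leaves implicit, namely the converse verification that the displayed matrix genuinely defines a 1-parameter subgroup, via $\psi_t=\exp(tX)$ with $X=\psi'(0)$ nilpotent ($X^3=0$) and $(X^2)_{34}=ac_1+bc_2$; since $sX$ and $tX$ commute, the homomorphism property is immediate, which closes the loop on the ``leads to'' claim in the statement.
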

\begin{proof}
First, notice that the elements of $G_5$ are characterized by setting $\phi=0$ in Eq. (\ref{eq::ElementsOfGI4}). Now, if $h\in G_5$ and $\psi:\mathbb{R}\to H$ is a continuous surjective map such that $\psi(1)=h$, then one can prove by induction that
 $$\psi(n)=\psi(1)^n=\left(\arraycolsep=4pt\begin{array}{cccc}
1 & 0 & 0 & a\,n\\[4pt]
0 & 1 & 0 & b\,n \\[4pt]
c_1\,n & c_2\,n & 1 & \displaystyle\left[c\,n+\frac{n(n-1)}{2}\,(ac_1+bc_2)\right]\\[4pt]
0 & 0 & 0 & 1\\[4pt]
\end{array}\right).
$$
 In addition, since $h=\psi(1)=\psi(m/m)=\psi(1/m)^{m}$, we also have
 $$\psi\left(\frac{1}{m}\right)=\left(\arraycolsep=4pt\begin{array}{cccc}
1 & 0 & 0 & \displaystyle\frac{a}{m}\\[5pt]
0 & 1 & 0 & \displaystyle\frac{b}{m} \\[5pt]
\displaystyle\frac{c_1}{m} & \displaystyle\frac{c_2}{m} & 1 & \displaystyle\left[\frac{c}{m}+\frac{1}{2m}\left(\frac{1}{m}-1\right)(ac_1+bc_2)\right]\\[5pt]
0 & 0 & 0 & 1\\[4pt]
\end{array}\right).
$$
 Finally, for any rational $q=n/m$ we conclude that
$$
\psi\left(\frac{n}{m}\right)=\left(\arraycolsep=4pt\begin{array}{cccc}
1 & 0 & 0 & a\displaystyle\frac{n}{m}\\[5pt]
0 & 1 & 0 & b\displaystyle\frac{n}{m} \\[5pt]
c_1\displaystyle\frac{n}{m} & c_2\displaystyle\frac{n}{m} & 1 & \displaystyle\left[c\frac{n}{m}+\frac{n}{2m}\left(\frac{n}{m}-1\right)(ac_1+bc_2)\right]\\[5pt]
0 & 0 & 0 & 1\\[4pt]
\end{array}\right).
$$
Now, using the continuity of $\psi$ and, in addition, redefining $c$ to be $c+(ac_1+bc_2)/2$, we find Eq. (\ref{eq::Grenzbewegungen}). (The $(3,4)$-entry converges to $ct+(ac_1+bc_2)t(t-1)/2$, which under our redefinition of $c$ is equal to the $(3,4)$-entry of Eq. (\ref{eq::Grenzbewegungen}).)
\end{proof}

The 1-parameter group of limit motions may be divided in five types \cite{Sachs1990,StrubeckerSOA1941}: (III) \textit{parabolic rotations}, when $(ac_1+bc_2)\not=0$; (IV) \textit{warped translations}, when $c=(ac_1+bc_2)=0$, but $(a,b)\not=(0,0)$ and $(c_1,c_2)\not=(0,0)$; (V) \textit{isotropic shears}, when $(a,b)=(0,0)$ and $c=0$, but $(c_1,c_2)\not=(0,0)$; (VI) \textit{translations along a non-isotropic direction}, when $c,c_1,c_2=0$, but $(a,b)\not=(0,0)$; and (VII) \textit{translations along an isotropic direction}, when $(a,b),(c_1,c_2)=(0,0)$, but $c\not=0$. As we shall see in Sect. \ref{sec::SimplyPseudoIsoInvSurf}, only types (I), (II), (III), (IV), and (VI) lead to admissible surfaces.   

Finally, let us mention that the division into two classes according to the action of a 1-parameter subgroup in the top view and isotropic direction allowed us to classify the simpler contributions to each motion. However, if one does not impose any restriction on the action of $\psi_t$, the most general 1-parameter subgroup of isotropic isometries is given by 
\begin{equation}
\psi_t=\left(\begin{array}{cccc}
\cos(t\phi) & -\sin(t\phi) & 0 & aC_t-bS_t\\[4pt]
\sin(t\phi) & \cos(t\phi) & 0 & bC_t+aS_t \\[4pt]
c_1C_t+c_2S_t & c_2C_t-c_1S_t & 1 & ct+D_1\tilde{C}_t+D_2\tilde{S}_t\\[4pt]
0 & 0 & 0 & 1\\[4pt]
\end{array}\right),\label{eq::MostGeneral1parSimplyIsoMotion}
\end{equation}
where $D_1=(ac_1+bc_2)$, $D_2=(ac_2-bc_1)$, and we have defined the functions
$$
C_t(\phi) = \frac{1}{2}+\frac{\cos(t\phi-\phi)-\cos(t\phi)}{2[1-\cos(\phi)]},
\,S_t(\phi) = \frac{\sin(\phi)+\sin(t\phi-\phi)-\sin(t\phi)}{2[1-\cos(\phi)]},
$$
and
$$
\tilde{C}_t(\phi) = \frac{t}{2}-\frac{\cos(t\phi-\phi)-\cos(\phi)}{2[1-\cos(\phi)]},
\,
\tilde{S}_t(\phi) = \frac{t\sin(\phi)-\sin(t\phi-\phi)-\sin(\phi)}{2[1-\cos(\phi)]}.
$$
Notice that $\lim_{\phi\to0}C_t(\phi)=t$, $\lim_{\phi\to0}S_t(\phi)=0$, $\lim_{\phi\to0}\tilde{C}_t(\phi)=t(t-1)/2$, and $\lim_{\phi\to0}\tilde{S}_t(\phi)=0$, which then allow us to recover the known expression for a limit motion corresponding to $\phi=0$. On the other hand, it is easily seen that setting $a,b,c_1,c_2=0$ leads to helicoidal motions.

To prove Eq. \eqref{eq::MostGeneral1parSimplyIsoMotion}, let $\psi:\mathbb{R}\to H\subseteq \mathrm{G}\mathbb{I}(4)$ be a 1-parameter subgroup such that $\psi(1)=h$, with $h$ as in Eq. (\ref{eq::ElementsOfGI4}). Using that $\psi(n)=\psi(1)^n=h^n$, we find by induction
\begin{equation}
\psi(n)=\left(\arraycolsep=4pt\begin{array}{cccc}
\cos(n\phi) & -\sin(n\phi) & 0 & A_n(\phi)\\
\sin(n\phi) & \cos(n\phi) & 0 & B_n(\phi) \\
C_{1,n}(\phi) & C_{2,n}(\phi) & 1 & C_{0,n}(\phi)\\
0 & 0 & 0 & 1\\
\end{array}\right),
\end{equation}
where
$$
A_n(\phi)=a\sum_{k=0}^{n-1}\cos(k\phi)-b\sum_{k=0}^{n-1}\sin(k\phi)=aC_n(\phi)-bS_n(\phi),
$$
$$
B_n(\phi)=b\sum_{k=0}^{n-1}\cos(k\phi)+a\sum_{k=0}^{n-1}\sin(k\phi)=bC_n(\phi)+aS_n(\phi),
$$
\begin{eqnarray*}
C_{0,n}(\phi) & = & nc+D_1\sum_{k=0}^{n-2}(n-1-k)\cos(k\phi)+D_2\sum_{k=0}^{n-2}(n-1-k)\sin(k\phi)\nonumber\\
& = & nc+D_1\tilde{C}_n(\phi)+D_2\tilde{S}_n(\phi),
\end{eqnarray*}
$$
C_{1,n}(\phi)=c_1\sum_{k=0}^{n-1}\cos(k\phi)+c_2\sum_{k=0}^{n-1}\sin(k\phi)=c_1C_n(\phi)+c_2S_n(\phi),
$$
and
$$
C_{2,n}(\phi)=c_2\sum_{k=0}^{n-1}\cos(k\phi)-c_1\sum_{k=0}^{n-1}\sin(k\phi)=c_2C_n(\phi)-c_1S_n(\phi).
$$

Applying the same reasoning, we can find an expression for $\phi(1/m)=h^{1/m}$ (which allows us to find $\phi(r)$ for any rational $r$), extend it to the real numbers by continuity, and finally deduce Eq. (\ref{eq::MostGeneral1parSimplyIsoMotion}).

To deduce closed expressions for the sums $\sum \cos(k\phi)$, $\sum \sin(k\phi)$, $\sum (n-1-k)\cos(k\phi)$, and $\sum (n-1-k)\sin(k\phi)$ above, we may use complex numbers and then apply the known techniques of summing power series:
$$\left\{
\begin{array}{c}
\sum \cos(k\phi)+\mathrm{i}\sum \sin(k\phi)=\sum z^k\\[4pt]
\sum (n-1-k)\cos(k\phi)+\mathrm{i}\sum (n-1-j)\sin(k\phi)=\sum (n-1-k)z^k\\
\end{array}
\right.,
$$
where $z=\cos(\phi)+\mathrm{i}\sin(\phi)$.

\subsection{One-parameter subgroups of pseudo-isotropic isometries}
\label{subsec::1ParSGPseudoIso}

With a similar reasoning as in the  previous subsection, we may write any $h\in\mathrm{G}\mathbb{I}_{\mathrm{p}}(4)$ as 
\begin{equation}
h=\left(\arraycolsep=4pt\begin{array}{cccc}
\cosh\phi & \sinh\phi & 0 & a\\
\sinh\phi & \cosh\phi & 0 & b \\
c_1 & c_2 & 1 & c\\
0 & 0 & 0 & 1\\
\end{array}\right).\label{eq::ElementsOfGIp4}
\end{equation}

The 1-parameter subgroups of pseudo-isotropic isometries are also distributed along 7 types, divided into two categories: (a) \emph{causal helicoidal motions}, which in the isotropic direction act either as a pure translations or as the identity map; and (b) \emph{limit motions} (\emph{Grenzbewegungen}), which in the top view plane act either as a pure translation or as the identity map.

The category of causal helicoidal motions
\begin{equation}
t\in\mathbb{R}\mapsto \psi(t)=\left(\arraycolsep=4pt\begin{array}{cccc}
\cosh(t\phi) & \sinh(t\phi) & 0 & 0\\
\sinh(t\phi) & \cosh(t\phi) & 0 & 0 \\
0 & 0 & 1 & c\,t\\
0 & 0 & 0 & 1\\
\end{array}\right)\in\mathrm{G}\mathbb{I}_p(4)
\end{equation}
can be divided in two types: (I) \emph{Lorentzian rotations} in the top view plane, when $c=0$ (here the orbits can be either time- or space-like Lorentzian circles, i.e., hyperbolas); and (II) \emph{helicoidal motions} around an isotropic axis (here the orbits can be either time- or space-like Lorentzian helices).  

With a proof completely similar to that of Theorem \ref{thr::SimplyIsoG51parsubgroup}, the category of pseudo-isotropic limit motions can be described by
\begin{thm}\label{thr::PseudoIsoG51parsubgroup}
The group $G_5^{\mathrm{p}}$ of pseudo-isotropic limit motions, i.e., motions that in the top view plane act either as a pure translation or as the identity map, leads to the 1-parameter subgroup
\begin{equation}
 \psi_t=\left(\arraycolsep=4pt\begin{array}{cccc}
1 & 0 & 0 & a\,t\\[4pt]
0 & 1 & 0 & b\,t \\[4pt]
c_1t & c_2t & 1 & \displaystyle\left[c\,t+(ac_1+bc_2)\frac{t^2}{2}\right]\\[4pt]
0 & 0 & 0 & 1\\[4pt]
\end{array}\right)\in\mathrm{G}\mathbb{I}_p(4),\,c_i,c,a,b\in\mathbb{R}.\label{eq::PseudoGrenzbewegungen}
\end{equation}
\end{thm}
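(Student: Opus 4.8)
The plan is to exploit the fact that, at $\phi=0$, the hyperbolic entries of a general pseudo-isotropic element coincide with the circular ones of the simply isotropic case, so that the argument of Theorem \ref{thr::SimplyIsoG51parsubgroup} transfers essentially unchanged. First I would observe that, by the defining hypothesis that the top view acts as a pure translation or as the identity, an element of $G_5^{\mathrm{p}}$ is precisely an element of $\mathrm{G}\mathbb{I}_{\mathrm{p}}(4)$ obtained by setting $\phi=0$ in Eq. (\ref{eq::ElementsOfGIp4}); since $\cosh 0=1$ and $\sinh 0=0$, this matrix has upper-left $2\times 2$ block equal to the identity and is formally identical to the $\phi=0$ element used in the simply isotropic proof. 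In particular, the sign difference carried by the index-one inner product $\langle\cdot,\cdot\rangle_{pz}$ never enters the subsequent matrix multiplications, because the whole computation proceeds through the identity block together with the shear and translation entries.

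Next I would fix a continuous surjective homomorphism $\psi:\mathbb{R}\to H\subseteq\mathrm{G}\mathbb{I}_{\mathrm{p}}(4)$ with $\psi(1)=h$ and establish, by induction on $n$, that $\psi(n)=h^n$ has top-view shear entries $c_1 n,\,c_2 n$, translation entries $a n,\,b n$, and corner $(3,4)$-entry $c n+\frac{n(n-1)}{2}(ac_1+bc_2)$; the inductive step is a single $4\times 4$ product in which the only nontrivial accumulation occurs in the corner entry. Using $h=\psi(1)=\psi(1/m)^m$ I would then invert this relation to obtain $\psi(1/m)$, combine it with the power formula to read off $\psi(n/m)$ for every rational $n/m$, and finally extend to all real $t$ by continuity of $\psi$.

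The only genuine bookkeeping step — and hence the \emph{main obstacle}, though it is a mild one — is the same cosmetic redefinition $c\mapsto c+(ac_1+bc_2)/2$ used in Theorem \ref{thr::SimplyIsoG51parsubgroup}: the limit of the corner entry is $c t+(ac_1+bc_2)\,t(t-1)/2$, and this reparametrization absorbs the linear-in-$t$ remainder so that the corner entry takes the clean form $c t+(ac_1+bc_2)\,t^2/2$ displayed in Eq. (\ref{eq::PseudoGrenzbewegungen}). Since no step of the argument depends on the signature of the inner product, I expect no difficulty beyond verifying that the general pseudo-isotropic element indeed reduces to the stated $\phi=0$ form, which is precisely what makes the proof ``completely similar'' to the simply isotropic one.
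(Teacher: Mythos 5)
Your proposal is correct and takes exactly the route the paper intends: the paper gives no separate argument for Theorem~\ref{thr::PseudoIsoG51parsubgroup}, stating only that the proof is ``completely similar'' to that of Theorem~\ref{thr::SimplyIsoG51parsubgroup}, and your write-up is precisely that transfer --- at $\phi=0$ the hyperbolic block of Eq.~(\ref{eq::ElementsOfGIp4}) collapses to the identity, so the induction on $\psi(n)=h^n$, the passage through $\psi(1/m)$ to rationals, the extension by continuity, and the redefinition $c\mapsto c+(ac_1+bc_2)/2$ all go through verbatim. Your added remark that the signature of $\langle\cdot,\cdot\rangle_{pz}$ never enters the matrix computations is exactly the reason the paper could omit the proof.
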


The 1-parameter subgroups of pseudo-isotropic limit motions are divided in five types: (III) \textit{pseudo-parabolic rotations}, when $(ac_1+bc_2)\not=0$; (IV) \textit{warped translations}, when $(ac_1+bc_2)=0$ and $c=0$, but $(a,b)\not=(0,0)$ and $(c_1,c_2)\not=(0,0)$; (V) \textit{pseudo-isotropic shears}, when $(a,b)=(0,0)$ and $c=0$, but $(c_1,c_2)\not=(0,0)$; (VI) \textit{translations along non-isotropic direction}, when $c=c_1=c_2=0$, but $(a,b)\not=(0,0)$; and (VII) \textit{translations along an isotropic direction}, when $(a,b),(c_1,c_2)=(0,0)$, but $c\not=0$. As we shall see in Sect. \ref{sec::SimplyPseudoIsoInvSurf}, only types (I), (II), (III), (IV), and (VI) lead to admissible surfaces. 

Finally, let us mention that without any restriction on $c,c_1,c_2,a,b$, and $\phi$, the most general 1-parameter subgroup of pseudo-isotropic isometries is
\begin{equation}\label{eq::MostGeneral1parPseudoIsoMotion}
\psi_t=\left(\begin{array}{cccc}
\cosh(t\phi) & \sinh(t\phi) & 0 & aCh_t-bSh_t\\[4pt]
\sinh(t\phi) & \cosh(t\phi) & 0 & bCh_t+aSh_t \\[4pt]
c_1Ch_t+c_2Sh_t & c_2Ch_t-c_1Sh_t & 1 & ct+D_1\tilde{Ch}_t+D_2\tilde{Sh}_t\\[4pt]
0 & 0 & 0 & 1\\[4pt]
\end{array}\right),
\end{equation}
where $D_1=(ac_1+bc_2)$, $D_2=(ac_2-bc_1)$, and we have defined the functions
$$
Ch_t(\phi) = \frac{1}{2}+\frac{\cosh(t\phi-\phi)-\cosh(t\phi)}{2[1-\cosh(\phi)]},
$$
$$
Sh_t(\phi) = \frac{\sinh(\phi)+\sinh(t\phi-\phi)-\sinh(t\phi)}{2[1-\cosh(\phi)]},
$$
$$
\tilde{Ch}_t(\phi) = \frac{t}{2}-\frac{\cosh(t\phi-\phi)-\cosh(\phi)}{2[1-\cosh(\phi)]},
$$
and
$$
\tilde{Sh}_t(\phi) = \frac{t\sinh(\phi)-\sinh(t\phi-\phi)-\sinh(\phi)}{2[1-\cosh(\phi)]}.
$$
Notice that $\lim_{\phi\to0}Ch_t(\phi)=t$, $\lim_{\phi\to0}Sh_t(\phi)=0$, $\lim_{\phi\to0}\tilde{Ch}_t(\phi)=t(t-1)/2$, and $\lim_{\phi\to0}\tilde{Sh}_t(\phi)=0$, which allow us to recover the expression for a limit motion corresponding to $\phi=0$.  

To deduce the expressions above, we can follow steps similar to those employed for $\mathrm{G}\mathbb{I}(4)$ by working with hyperbolic trigonometric functions. To find expressions for $\sum \cosh(k\phi)$, $\sum \sinh(k\phi)$, $\sum (n-1-k)\cosh(k\phi)$, and $\sum (n-1-k)\sinh(k\phi)$, we may use the ring of Lorentz numbers $\mathbb{L}$ \cite{daSilvaArXivIso2017} (also known as hyperbolic or double numbers \cite{Yaglom1979}) and then apply the known techniques of summing power series: $$
\left\{
\begin{array}{c}
\sum \cosh(k\phi)+\ell\sum \sinh(k\phi)=\sum w^k\\[4pt]
\sum (n-1-k)\cosh(k\phi)+\ell\sum (n-1-j)\sinh(k\phi)=\sum (n-1-k)w^k\\
\end{array}
\right.,$$
where $w=\cosh(\phi)+\ell\sinh(\phi)\in\mathbb{L}=\{a+\ell b:a,b\in\mathbb{R},\ell\not\in\mathbb{R},\ell^2=1\}.$

\section{Simply and pseudo isotropic invariant surfaces}
\label{sec::SimplyPseudoIsoInvSurf}

In this section, it is described the invariant surfaces obtained from  the 1-parameter subgroups of $\mathrm{ISO}(\mathbb{I}^3)$ (Subsect. \ref{subsec::SimplyInvSurf}) and of $\mathrm{ISO}(\mathbb{I}_{\mathrm{p}}^3)$ (Subsect. \ref{subsec::PseudoInvSurf}). We also identify those invariant surfaces that are admissible and show, as expected, that spheres of parabolic type are invariant surfaces under all types of revolutions. 

The generating curve of an invariant surface can be assumed to be a plane curve by intersecting the surface with a plane, in which case we should distinguish between isotropic and non-isotropic planes. Thus, we have the
\begin{dfn}\label{def::InvSurfniAndiType}
When the generating curve of an invariant surface, assumed to be at least $C^2$, lies on a non-isotropic plane (here we choose the $xy$-plane), we say that the corresponding invariant surface is of \textit{non-isotropic type}, or of \textit{ni-type}, for short. On the other hand, when the generating curve lies on an  isotropic plane (here we choose the $xz$-plane), we say that the corresponding invariant surface is of \textit{isotropic type}, or of \textit{i-type}, for short.
\end{dfn}

We shall see in Subsects. \ref{subsec::DGSimplyInvHelSurf} and \ref{subsec::DGPseudoInvHelSurf} that the mean curvature of helicoidal surfaces of ni-type depends on the value of $c$, while for the i-type there is no dependence on $c$. This shows that the distinction between invariant surfaces of ni- and i-types  in simply and pseudo isotropic geometries is meaningful. 

\subsection{Simply isotropic invariant surfaces}
\label{subsec::SimplyInvSurf}

We now use the 1-parameter subgroups of simply and pseudo isotropic isometries to investigate invariant surfaces, Eq. (\ref{eq::DefInvariantSurf}). For each 1-parameter subgroup we have two classes of surfaces, i.e., invariant surfaces of ni- and of i-types. 

We first describe helicoidal invariant surfaces in $\mathbb{I}^3$, i.e., when $a,b,c_i=0$:
\newline
(I) Euclidean revolution surfaces ($c=0$): 
\begin{itemize}
\item If $\alpha(u)=(x(u),y(u),0)$, we have the revolution surface of ni-type
\begin{equation}
Y_1(u,t) = (x(u)\cos(t\phi)-y(u)\sin(t\phi),x(u)\sin(t\phi)+y(u)\cos(t\phi),0), 
\end{equation}
that represents a portion of a non-isotropic plane;
\item If $\alpha(u)=(x(u),0,z(u))$, we have the revolution surface of i-type
\begin{equation}
Z_1(u,t) = (x(u)\cos(t\phi),x(u)\sin(t\phi),z(u)). 
\end{equation}
\end{itemize}
(II) Helicoidal surfaces:
\begin{itemize}
\item If $\alpha(u)=(x(u),y(u),0)$, we have the helicoidal surface of ni-type
\begin{equation}
Y_2(u,t) = (x(u)\cos(t\phi)-y(u)\sin(t\phi),x(u)\sin(t\phi)+y(u)\cos(t\phi),ct); 
\end{equation}
\item If $\alpha(u)=(x(u),0,z(u))$, we have the helicoidal surface of i-type
\begin{equation}
Z_2(u,t) = (x(u)\cos(t\phi),x(u)\sin(t\phi),z(u)+ct). 
\end{equation}
\end{itemize}

\begin{rmk}
When $\alpha$ is $\alpha(u)=R(\cos u,\sin u,0)$ in the ni-type or an isotropic line $\alpha(u)=(\pm R,0,z(u))$ in the i-type ($R$ constant), the resulting helicoidal surface is a sphere of cylindrical type, which is not an admissible surface.
\end{rmk}

\begin{proposition}
Assume that the generating curve is neither a circle centered at the origin  (in the ni-type) nor an isotropic line (in the i-type). Then, all simply isotropic Euclidean revolution and helicoidal surfaces are admissible. 
\end{proposition}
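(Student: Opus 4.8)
The plan is to reduce admissibility to the single condition $X_{12}\neq0$ recorded in Subsect.~\ref{subsec::IsotropicSurf}, and to exploit the fact that $X_{12}=x_1^1x_2^2-x_2^1x_1^2$ depends only on the first two coordinates of the parametrization, i.e.\ on its top view. Since the helicoidal motions of type~(II) differ from the revolutions of type~(I) only through the addition of $c\,t$ to the isotropic third coordinate, the surfaces $Y_2,Z_2$ have exactly the same top view as $Y_1,Z_1$, respectively. Hence $X_{12}$ is unchanged when passing from revolution to helicoidal surfaces, and it suffices to compute it once in the ni-type (for $Y_1$) and once in the i-type (for $Z_1$). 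Throughout we use that $\phi\neq0$ for types~(I) and~(II), since $\phi=0$ would describe a pure translation.

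For the ni-type, write $\mathbf{x}(u,t)=Y_1(u,t)$ with $u^1=u$, $u^2=t$, so that $(x^1,x^2)=(x\cos(t\phi)-y\sin(t\phi),\,x\sin(t\phi)+y\cos(t\phi))$. Differentiating in $t$ reproduces the infinitesimal rotation of rate $\phi$, namely $x_2^1=-\phi\,x^2$ and $x_2^2=\phi\,x^1$, while $(x_1^1,x_1^2)$ is the rotated image of $(x',y')$. Substituting into $X_{12}=x_1^1x_2^2-x_2^1x_1^2$ and expanding, the cross terms proportional to $\cos(t\phi)\sin(t\phi)$ cancel and the rest combine via $\cos^2(t\phi)+\sin^2(t\phi)=1$, leaving
\[
X_{12}=\phi\,(xx'+yy')=\frac{\phi}{2}\,\frac{\rmd}{\rmd u}\!\left(x^2+y^2\right).
\]
This vanishes identically if and only if $x^2+y^2$ is constant, i.e.\ precisely when the generating curve is a circle centered at the origin; that case being excluded by hypothesis, the corresponding surface is admissible.

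For the i-type, the same bookkeeping applied to $Z_1(u,t)=(x\cos(t\phi),x\sin(t\phi),z)$ gives, after the cancellation $\cos^2(t\phi)+\sin^2(t\phi)=1$,
\[
X_{12}=\phi\,x(u)\,x'(u).
\]
This vanishes identically exactly when $x$ is constant, i.e.\ when the generating curve is an isotropic line $(\pm R,0,z(u))$, which is the excluded case; hence, away from the rotation axis ($x\neq0$) and the isolated critical points of $x$, one has $X_{12}\neq0$ and the surface is admissible.

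The computations are routine; the substantive point is the conceptual observation that admissibility is insensitive to the isotropic direction, which is what forces the helicoidal and revolution surfaces to share the same degeneracy locus and thereby reduces the four cases to two. The only subtlety worth flagging, in the i-type, is that $X_{12}=\phi\,xx'$ also requires the generating curve to avoid the axis $x=0$; excluding isotropic lines removes the only family for which $X_{12}$ vanishes identically, so that admissibility holds on the open dense set where $x\,x'\neq0$.
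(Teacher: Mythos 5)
Your proof is correct and is essentially the paper's own argument: the paper computes the metric determinant for the helicoidal surfaces, obtaining $\det g_{ij}=\phi^2(xx'+yy')^2$ in the ni-type and $\phi^2(xx')^2$ in the i-type, which is exactly the square of your $X_{12}$ (as it must be, since $\det g_{ij}=X_{12}^2$ for the simply isotropic metric), and the analysis of when it vanishes is identical. The only difference is the direction of the reduction from four cases to two --- the paper computes for helicoidal surfaces and recovers revolutions as the special case $c=0$, whereas you compute for revolutions and lift to helicoidal surfaces via the observation that $X_{12}$ depends only on the top view; these are interchangeable precisely because the degeneracy condition never involves the third coordinate.
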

\begin{proof}
It is enough to investigate  helicoidal surfaces, type (II) above, since revolution surfaces are obtained by imposing $c=0$. In the ni-type we have 
$$\left\{
\begin{array}{lcl}
\partial_u Y_2 &=& (x'\cos(t\phi)-y'\sin(t\phi),x'\sin(t\phi)+y'\cos(t\phi),0) \\
\partial_t Y_2 &=& (-\phi x\sin(t\phi)-\phi y\cos(t\phi),\phi x\cos(t\phi)-\phi y \sin(t\phi),c)
\end{array}
\right.
$$ 
 and the first fundamental form is
\begin{equation}\label{eq::SimplyIso1stFFhelicoidalSurfniType}
\mathrm{I}=(x'\,^2+y'\,^2)\mathrm{d}u^2+2\phi(xy'-x'y)\mathrm{d}u\rmd t+\phi^2(x^2+y^2)\rmd t^2.
\end{equation}
The determinant of the induced metric is
\begin{equation}
\det g_{ij} = \phi^2(xx'+yy')^2=0\Leftrightarrow x^2+y^2=R^2.
\end{equation}
By hypothesis, $\alpha(u)$ is not a circle centered at the origin and, therefore, the corresponding helicoidal surface of ni-type is admissible.

On the other hand, in the i-type we have 
$$\left\{
\begin{array}{lcl}
\partial_u Z_2 &=& (x'\cos(t\phi),x'\sin(t\phi),z') \\
\partial_t Z_2 &=& (-\phi x\sin(t\phi),\phi x\cos(t\phi),c)
\end{array}
\right.
$$ 
 and the first fundamental form is
\begin{equation}\label{eq::SimplyIso1stFFhelicoidalSurfiType}
\mathrm{I}=x'\,^2\mathrm{d}u^2+\phi^2x^2\rmd t^2\Rightarrow\det g_{ij} = \phi^2(xx')^2=0\Leftrightarrow x^2=R^2.
\end{equation}
By hypothesis, $\alpha(u)$ is not an isotropic line and, therefore, the corresponding helicoidal surface of i-type is admissible.
\end{proof}
\begin{rmk}\label{rem::SimplyNormalFormHelSurf}
It is known that when written in its normal form, the first fundamental form of any simply  isotropic surface are given by Eq. (\ref{eq::1stFFinNormalForm}). Now notice that the first fundamental form of helicoidal surfaces of ni- and i-types can be written as $\mathrm{I}=\rmd U^2+\rmd T^2$ under the change of coordinates
$$
\left\{
\begin{array}{l}
U(u,t)=x(u)-t\phi\, y(u)+U_0\\
T(u,t)=y(u)+t\phi\, x(u)+T_0\\
\end{array}
\right.\mbox{ and }\left\{
\begin{array}{l}
U(u,t)=x(u)+U_0\\
T(u,t)=t\phi\, x(u)+T_0\\
\end{array}
\right.,
$$
respectively ($U_0,T_0$ constants). Indeed, completing  squares in  Eq. (\ref{eq::SimplyIso1stFFhelicoidalSurfniType}), gives
\begin{eqnarray}
\mathrm{I} & = & (x'^2\rmd u^2-2\phi x'y\rmd u\rmd t+\phi^2y^2\rmd t^2)+(y'^2\rmd u^2+2\phi xy'\rmd u\rmd t+\phi^2x^2\rmd t^2)\nonumber\\
& = & (x'\rmd u-\phi y\rmd t)^2+(y'\rmd u+\phi x\rmd t)^2.\nonumber
\end{eqnarray}
Now, introducing $U(u,t)$ and $T(u,t)$ such that 
$$\left\{
\begin{array}{c}
\rmd U = x'\rmd u-\phi y\rmd t\\
\rmd T = y'\rmd u+\phi x\rmd t
\end{array}
\right.\Rightarrow\left\vert\frac{\partial(U,T)}{\partial(u,t)}\right\vert=\left\vert
\begin{array}{lr}
x' & -\phi y\\
y' & \phi x\\
\end{array}
\right\vert=\phi(xx'+yy')\not=0,
$$
 we see that $(u,t)\mapsto (U,T)$ defines a smooth coordinate change, which can be easily integrated to give $U=x-t\phi y+U_0$ and $T=y+t\phi x+T_0$. On the other hand, for helicoidal surfaces of i-type it is seen that, under $(u,t)\mapsto (U=x+U_0,T=t\phi x+T_0)$, $\mathrm{I}$ in Eq. (\ref{eq::SimplyIso1stFFhelicoidalSurfiType}) is rewritten as $\mathrm{I}=\rmd U^2+\rmd T^2$.
\end{rmk}

Simply isotropic invariant surfaces resulting from limit motions are:
\newline
(III) Parabolic revolution surfaces:
\begin{itemize}
\item If $\alpha(u)=(x(u),y(u),0)$, we have the parabolic revolution surface of ni-type
\begin{equation}
Y_3(u,t) = (a\,t+x(u),b\,t+y(u),c\,t+\frac{ac_1+bc_2}{2}\,t^2+c_1t\,x(u)+c_2t\,y(u)); 
\end{equation}
\item If $\alpha(u)=(x(u),0,z(u))$, we have the parabolic revolution surface of i-type
\begin{equation}
Z_3(u,t) = (a\,t+x(u),b\,t,c\,t+\frac{ac_1+bc_2}{2}\,t^2+c_1t\,x(u)+z(u)). 
\end{equation}
\end{itemize}
(IV) Warped translation surfaces ($c=(ac_1+bc_2)=0$; $(a,b),(c_1,c_2)\not=(0,0)$):
\begin{itemize}
\item If $\alpha(u)=(x(u),y(u),0)$, we have the warped translation surface of ni-type
\begin{equation}
Y_4(u,t) = (a\,t+x(u),b\,t+y(u),c_1t\,x(u)+c_2t\,y(u)); 
\end{equation}
\item If $\alpha(u)=(x(u),0,z(u))$, we have the warped translation surface of i-type
\begin{equation}
Z_4(u,t) = (a\,t+x(u),b\,t,c_1t\,x(u)+z(u)). 
\end{equation}
\end{itemize}
(V) Isotropic shear surfaces ($a,b,c=0$; $(c_1,c_2)\not=(0,0)$):
\begin{itemize}
\item If $\alpha(u)=(x(u),y(u),0)$, we have the isotropic shear surface of ni-type
\begin{equation}
Y_5(u,t) = (x(u),y(u),c_1t\,x(u)+c_2t\,y(u)); 
\end{equation}
\item If $\alpha(u)=(x(u),0,z(u))$, we have the isotropic shear surface of i-type
\begin{equation}
Z_5(u,t) = (x(u),0,c_1t\,x(u)+z(u)).
\end{equation}
\end{itemize}
(VI) Translation surfaces along non-isotropic directions ($c,c_i=0$; $(a,b)\not=(0,0)$):
\begin{itemize}
\item If $\alpha(u)=(x(u),y(u),0)$, we have the translation surface of ni-type
\begin{equation}
Y_6(u,t) = (a\,t+x(u),b\,t+y(u),0), 
\end{equation}
that represents a portion of a non-isotropic plane;
\item If $\alpha(u)=(x(u),0,z(u))$, we have the translation surface of i-type
\begin{equation}
Z_6(u,t) = (a\,t+x(u),b\,t,z(u)), 
\end{equation}
that represents a cylindrical surface with non-isotropic rulings.
\end{itemize}
(VII) Translation surfaces along isotropic directions ($a,b,c_i=0$; $c\not=0$):
\begin{itemize}
\item If $\alpha(u)=(x(u),y(u),0)$, we have the translation surface of ni-type
\begin{equation}
Y_7(u,t) = (x(u),y(u),c\,t), 
\end{equation}
that represents a cylindrical surface with isotropic rulings;
\item If $\alpha(u)=(x(u),0,z(u))$, we have the translation surface of i-type
\begin{equation}
Z_7(u,t) = (x(u),0,z(u)+c\,t), 
\end{equation}
that represents a portion of an isotropic plane.
\end{itemize}

\begin{rmk}
Notice that isotropic shear surfaces and translation surfaces along isotropic directions, types (V) and (VII) above, are not admissible (here $a=0=b$). In addition, if $\alpha(u)=(au,bu,0)$, then the corresponding parabolic surface of ni-type is of the form $Y_3=(aU,bU,Z(U,V))$, where $U=t+u$ and $V=t-u$. Rewriting it as $Y_3=U\,(a,b,0)+Z\,\mathbf{e}_3$ clearly shows $Y_3$ is a portion of an isotropic plane. Similar conclusions apply for parabolic revolution surfaces of i-type when $\alpha(u)=(k,0,z(u))$ or when $b=0$. Indeed, in the former, $Z_3=(k,0,0)+t(a,b,0)+Z(t,u)\mathbf{e}_3$, while in the latter $Z_3=X(u,t)\mathbf{e}_1+Z(u,t)\mathbf{e}_3$.
\end{rmk}

\begin{proposition}\label{prop::SimplyIsoAdmissSurfForGrenzebewegung}
Assume that $a,b$ do not vanish simultaneously and also that the generating curve $\alpha$ is neither a part of a straight line $\{bX-aY=k,Z=0\}$ (in the ni-type) nor a part of an isotropic line or $b\not=0$ (in the i-type). Then, all simply isotropic parabolic revolution surfaces, warped translations surfaces, and translation surfaces along non-isotropic directions are admissible. 
\end{proposition}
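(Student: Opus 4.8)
The plan is to show that the induced metric has rank $2$ by verifying $X_{12}=x_1^1x_2^2-x_2^1x_1^2\neq0$, which (as recorded in Subsect. \ref{subsec::IsotropicSurf}) is precisely the admissibility condition. Since types (III), (IV), and (VI) are all special cases of the parabolic revolution surfaces—type (IV) arising from $c=(ac_1+bc_2)=0$ and type (VI) from $c=c_1=c_2=0$—it suffices to treat $Y_3,Z_3$ directly and then read off the conclusions for the others by specialization. This mirrors the strategy in the preceding proposition, where it was enough to study helicoidal surfaces since revolution surfaces followed by setting $c=0$.

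First I would compute the tangent vectors $\partial_u$ and $\partial_t$ for $Y_3$ in the ni-type. From
$$
Y_3(u,t)=(at+x(u),\,bt+y(u),\,*),
$$
the top views of the partials are $\widetilde{\partial_uY_3}=(x',y',0)$ and $\widetilde{\partial_tY_3}=(a,b,0)$, so the relevant $2\times2$ determinant is
$$
X_{12}=\det\begin{pmatrix} x' & y' \\ a & b\end{pmatrix}=bx'-ay'.
$$
Thus $X_{12}=0$ identically along the curve precisely when $bx'(u)=ay'(u)$ for all $u$, i.e. when $\alpha$ parameterizes (a part of) the straight line $bX-aY=k$ with $Z=0$. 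Because $\alpha$ is assumed by hypothesis not to be such a line (and $(a,b)\neq(0,0)$), we get $X_{12}\not\equiv0$, so $Y_3$ is admissible.

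Next I would carry out the analogous computation for the i-type surface $Z_3(u,t)=(at+x(u),\,bt,\,*)$. Here the top-view partials are $\widetilde{\partial_uZ_3}=(x',0,0)$ and $\widetilde{\partial_tZ_3}=(a,b,0)$, giving
$$
X_{12}=\det\begin{pmatrix} x' & 0 \\ a & b\end{pmatrix}=bx'.
$$
This vanishes identically iff $bx'(u)\equiv0$, that is, iff either $b=0$ or $x'\equiv0$; the latter means $\alpha$ is an isotropic line $x\equiv k$. The hypothesis excludes both possibilities (it requires $\alpha$ not to be an isotropic line and $b\neq0$), so $Z_3$ is admissible. Finally I would note that the warped-translation and non-isotropic-translation surfaces inherit admissibility: since $X_{12}$ depends only on $a,b$ and the top view of $\alpha$—and not on $c$ or $c_1,c_2$—the same determinant formulas $bx'-ay'$ (ni-type) and $bx'$ (i-type) apply verbatim to types (IV) and (VI), yielding the stated result. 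I do not anticipate a serious obstacle here; the only point requiring care is tracking that admissibility genuinely decouples from the isotropic-direction data, so that the single determinant computation simultaneously settles all three motion types.
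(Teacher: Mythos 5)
Your proposal is correct and follows essentially the same route as the paper: reduce to the parabolic revolution surfaces $Y_3,Z_3$ (types (IV) and (VI) being specializations in $c,c_1,c_2$ that do not affect the rank condition), compute the partials, and identify the failure locus as $bx-ay=k$ in the ni-type and $b=0$ or an isotropic line in the i-type. The only cosmetic difference is that you verify $X_{12}\neq0$ directly from the top views, while the paper computes the first fundamental form and checks $\det g_{ij}\neq0$; in $\mathbb{I}^3$ these coincide since $\det g_{ij}=X_{12}^2$.
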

\begin{proof}
It is enough to investigate the parabolic revolution surfaces, since the other surfaces are obtained by imposing some kind of restriction on the constants $a,b,c,c_1,$ and $c_2$. In the ni-type we have 
$$\left\{
\begin{array}{lcl}
\partial_u Y_3 &=& (x',y',c_1tx'+c_2ty') \\
\partial_t Y_3 &=& (a,b,c+(ac_1+bc_2)t+c_1x+c_2y)
\end{array}
\right.
$$
 and the first fundamental form is
\begin{equation}\label{eq::SimplyIso1stFFParaRevSurfniType}
\mathrm{I}=(x'\,^2+y'\,^2)\mathrm{d}u^2+2(ax'+by')\mathrm{d}u\rmd t+(a^2+b^2)\rmd t^2.
\end{equation}
The determinant of the induced metric is
\begin{equation}
\det g_{ij} = (bx'-ay')^2=0\Leftrightarrow bx-ay=k.
\end{equation}
By hypothesis, $\alpha(u)$ is not a straight line with neither slope $b/a$ nor $a/b$  and, therefore, the corresponding parabolic revolution surface of ni-type is admissible.

On the other hand, in the i-type we have 
$$\left\{
\begin{array}{lcl}
\partial_u Z_3 &=& (x',0,c_1tx'+z') \\
\partial_t Z_3 &=& (a,b,c+(ac_1+bc_2)t+c_1x)
\end{array}
\right.
$$
 and the first fundamental form is
\begin{equation}\label{eq::SimplyIso1stFFParaRevSurfiType}
\mathrm{I}=x'\,^2\mathrm{d}u^2+2ax'\rmd u\rmd t+(a^2+b^2)\rmd t^2\Rightarrow
\det g_{ij} = b^2x'\,^2=0\Leftrightarrow b=0\mbox{ or }x=k.
\end{equation}
By hypothesis, $\alpha(u)$ is neither an isotropic line nor $b\not=0$ and, therefore, the corresponding parabolic revolution surface of i-type is admissible.
\end{proof}

\begin{rmk}\label{rem::SimplyNormalFormParaSurf}
In analogy to Remark \ref{rem::SimplyNormalFormHelSurf}, the first fundamental form of parabolic  surfaces of ni- and i-types can be written as $\mathrm{I}=\rmd U^2+\rmd T^2$ under the map
$$
\left\{
\begin{array}{l}
U(u,t)=x(u)+at+U_0\\
T(u,t)=y(u)+bt+T_0\\
\end{array}
\right.\mbox{ and }\left\{
\begin{array}{l}
U(u,t)=x(u)+at+U_0\\
T(u,t)=bt+T_0\\
\end{array}
\right.,
$$
respectively ($U_0,T_0$ constants). Indeed, completing squares in Eq. (\ref{eq::SimplyIso1stFFParaRevSurfniType}), gives
\begin{eqnarray}
\mathrm{I} & = & (x'^2\rmd u^2+2ax'\rmd u\,\rmd t+a^2\rmd t^2)+(y'^2\rmd u^2+2by'\rmd u\,\rmd t+b^2\rmd t^2)\nonumber\\
& = & (x'\rmd u+a\,\rmd t)^2+(y'\rmd u+b\,\rmd t)^2.\nonumber
\end{eqnarray}
Now, introducing $U(u,t)$ and $T(u,t)$ such that 
$$\left\{
\begin{array}{c}
\rmd U = x'\rmd u+a\,\rmd t\\
\rmd T = y'\rmd u+b\,\rmd t
\end{array}
\right.\Rightarrow\left\vert\frac{\partial(U,T)}{\partial(u,t)}\right\vert=\left\vert
\arraycolsep=3pt\begin{array}{lr}
x' & a\\
y' & b\\
\end{array}
\right\vert=(bx'-ay')\not=0,
$$
 we see that $(u,t)\mapsto (U,T)$ defines a smooth coordinate change, which can be easily integrated to give $U=x+at+U_0$ and $T=y+bt+T_0$. On the other hand, proceeding similarly for parabolic surfaces of i-type, we  see that, under $(u,t)\mapsto (U=x+at+U_0,T=bt+T_0)$, $\mathrm{I}$ in Eq. (\ref{eq::SimplyIso1stFFParaRevSurfiType}) is rewritten as $\mathrm{I}=\rmd U^2+\rmd T^2$.
\end{rmk}

Concerning the most general 1-parameter subgroup of simply isotropic motion in Eq. (\ref{eq::MostGeneral1parSimplyIsoMotion}), we have the following invariant surfaces of ni- and i-types
\begin{eqnarray}
Y_8(u,t) & = & [x\,\cos(t\phi)-y\,\sin(t\phi)+aC_t-bS_t]\mathbf{e}_1+\nonumber\\
& + & [x\,\sin(t\phi)+y\,\cos(t\phi)+bC_t+aS_t]\mathbf{e}_2+ [c\,t +(ac_1+bc_2)\tilde{C}_t+\nonumber\\
& + & (ac_2-bc_1)\tilde{S}_t+(c_1C_t+c_2S_t)x+(c_2C_t-c_1S_t)y]\mathbf{e}_3, 
\end{eqnarray}
if the generating curve is $\alpha(u)=(x(u),y(u),0)$  and
\begin{eqnarray}
Z_8(u,t) & = & [x(u)\,\cos(t\phi)+aC_t-bS_t]\mathbf{e}_1+\nonumber\\
&+&[x(u)\,\sin(t\phi)+bC_t+aS_t]\mathbf{e}_2+[c\,t+(ac_1+bc_2)\tilde{C}_t+\nonumber\\
&+&(ac_2-bc_1)\tilde{S}_t+(c_1C_t+c_2S_t)x(u)+z(u)]\mathbf{e}_3, 
\end{eqnarray}
if the generating curve is $\alpha(u)=(x(u),0,z(u))$, respectively.

Given a point $q\in \mathbb{I}^3$, the orbits of the 1-parameter subgroups $\psi_t$ of type (I), (II), and (III), i.e., the curves $t\mapsto\psi_t(q)$, are Euclidean  circles, Euclidean  helices with an isotropic screw axis, and parabolas (isotropic circles), respectively. For the other types of 1-parameter subgroups, i.e., when $\phi=0$ and $(ac_1+bc_2)=0$, the orbits of the corresponding 1-parameter subgroups of limit motions are straight lines and, therefore, 
\begin{proposition}
Invariant surfaces in $\mathbb{I}^3$ of type $\mathrm{(IV)}-\mathrm{(VII)}$ are ruled. \label{Prop::SimpIsoInvarRuledSurf}
\end{proposition}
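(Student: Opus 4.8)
The plan is to exhibit, for each of the types $\mathrm{(IV)}$--$\mathrm{(VII)}$, a straight-line ruling of the corresponding invariant surface; since such a surface is foliated by the orbits $t\mapsto\psi_t(q)$ of its defining one-parameter subgroup, it suffices to show that every orbit is a straight line.

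First I would record that all four types are \emph{limit motions}, so $\phi=0$, and that each satisfies $ac_1+bc_2=0$. The latter is immediate case by case: it is imposed outright in $\mathrm{(IV)}$, while $(a,b)=(0,0)$ in $\mathrm{(V)}$ and $\mathrm{(VII)}$ and $(c_1,c_2)=(0,0)$ in $\mathrm{(VI)}$ and $\mathrm{(VII)}$, so the product vanishes identically. Substituting $ac_1+bc_2=0$ into the limit motion of Theorem~\ref{thr::SimplyIsoG51parsubgroup} kills the single quadratic term $(ac_1+bc_2)t^2/2$ in the $(3,4)$-entry, so that every matrix entry is at most linear in $t$. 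Applied to an arbitrary point $q=(q_1,q_2,q_3)$ this yields
\[
\psi_t(q)=(q_1,q_2,q_3)+t\,(a,\,b,\,c_1q_1+c_2q_2+c),
\]
which is plainly a straight line through $q$ with direction $\mathbf{w}(q)=(a,b,c_1q_1+c_2q_2+c)$.

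It then remains to read off the ruling. Parameterizing the invariant surface by $\mathbf{x}(u,t)=\psi_t(\alpha(u))$ for its generating curve $\alpha$, the displayed formula gives $\mathbf{x}(u,t)=\alpha(u)+t\,\mathbf{w}(\alpha(u))$, the standard form of a ruled surface with directrix $\alpha$ and ruling field $u\mapsto\mathbf{w}(\alpha(u))$; this holds uniformly for the ni- and i-type generating curves and can equally be checked by direct inspection of the explicit parameterizations $Y_4,\dots,Y_7$ and $Z_4,\dots,Z_7$. I expect no genuine obstacle here: the only thing demanding care is the elementary verification that $ac_1+bc_2$ vanishes for each type, together with the conceptual point—already flagged just before the statement—that it is precisely the cancellation of this nonlinear term that degenerates the orbits from parabolas (as in type $\mathrm{(III)}$, where $ac_1+bc_2\neq0$) into straight lines.
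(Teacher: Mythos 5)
Your proposal is correct and follows essentially the same route as the paper: the paper derives the proposition from the observation that for types (IV)--(VII) one has $\phi=0$ and $ac_1+bc_2=0$, so the orbits $t\mapsto\psi_t(q)$ of the limit motion in Theorem \ref{thr::SimplyIsoG51parsubgroup} are straight lines, and the surface $\mathbf{x}(u,t)=\psi_t(\alpha(u))$ is therefore ruled. Your write-up simply makes explicit the case-by-case verification of $ac_1+bc_2=0$ and the resulting affine form $\psi_t(q)=q+t\,(a,b,c_1q_1+c_2q_2+c)$, which is exactly the computation the paper leaves implicit.
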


\begin{example}[Spheres as invariant surfaces]\label{exe::SphrAsInvSurf}
Let $p\not=0$ be a constant and $\alpha(u)=(pu,0,\frac{pu^2}{2})$ be an isotropic circle (the induced geometry in the $xz$-plane is $\mathbb{I}^2$, whose circles of parabolic type are precisely parabolas with  vertex and focus lying on an isotropic line \cite{Sachs1987}). Applying an elliptic revolution to $\alpha$ gives
$$Z_2(u,t)=(pu\cos(t\phi),pu\sin(t\phi),\frac{pu^2}{2}),$$
which parameterizes the sphere of parabolic type $\Sigma^2(p):z=\frac{1}{2p}(x^2+y^2)$. On the other hand, applying a parabolic revolution to $\alpha(u)$ with $c=0$, $a=pc_1$, and $b=pc_2$, gives
$$Z_3(u,t)=(at+pu,bt,\frac{(ac_1+bc_2)}{2}t^2+c_1tpu+\frac{pu^2}{2}),$$
which also parameterizes the sphere of parabolic type $\Sigma^2(p)$.
\end{example}

Up to a linear change of coordinates, in Euclidean space there exists only one type of rotation. In $\mathbb{I}^3$, however, we must distinguish between Euclidean and parabolic rotations based on their effect on the top view plane. The example above shows us that spheres of parabolic type are invariant with respect to all types of rotation we can have in simply isotropic geometry.

\subsection{Pseudo-isotropic invariant surfaces}
\label{subsec::PseudoInvSurf}

As in $\mathbb{I}^3$, for each kind of 1-parameter subgroup of $\mathrm{ISO}(\mathbb{I}^3_{\mathrm{p}})$ we distinguish the corresponding surface based on the type of plane containing the generating curve (Definition \ref{def::InvSurfniAndiType}). In addition, for the i-type we also distinguish between the  $xz$- and $yz$-planes, since they lead to curves of distinct causal characters: spacelike in the former and timelike in the latter. 

We first describe helicoidal invariant surfaces in $\mathbb{I}_{\mathrm{p}}^3$, i.e., when $a,b,c_i=0$:
\newline
(I) Lorentzian revolution surfaces ($c=0$): 
\begin{itemize}
\item If $\alpha(u)=(x(u),y(u),0)$, we have the revolution surface of ni-type
\begin{equation}
Yh_1(u,t) = (x(u)\cosh(t\phi)+y(u)\sinh(t\phi),x(u)\sinh(t\phi)+y(u)\cosh(t\phi),0), 
\end{equation}
that represents a portion of a non-isotropic plane;
\item If $\alpha(u)=(x(u),0,z(u))$ ($\alpha$ spacelike), we have the surface of i-type
\begin{equation}
Zh_1(u,t) = (x(u)\cosh(t\phi),x(u)\sinh(t\phi),z(u)); 
\end{equation}
\item If $\alpha(u)=(0,y(u),z(u))$ ($\alpha$ timelike), we have the surface of i-type
\begin{equation}
Wh_1(u,t) = (y(u)\sinh(t\phi),y(u)\cosh(t\phi),z(u)). 
\end{equation}
\end{itemize}
(II) Helicoidal surfaces:
\begin{itemize}
\item If $\alpha(u)=(x(u),y(u),0)$, we have the helicoidal surface of ni-type
\begin{equation}
Yh_2(u,t) = (x(u)\cosh(t\phi)+y(u)\sinh(t\phi),x(u)\sinh(t\phi)+y(u)\cosh(t\phi),ct); 
\end{equation}
\item If $\alpha(u)=(x(u),0,z(u))$ ($\alpha$ spacelike), we have the surface of i-type
\begin{equation}
Zh_2(u,t) = (x(u)\cosh(t\phi),x(u)\sinh(t\phi),z(u)+ct); 
\end{equation}
\item If $\alpha(u)=(0,y(u),z(u))$ ($\alpha$ timelike), we have the  surface of i-type
\begin{equation}
Wh_2(u,t) = (y(u)\sinh(t\phi),y(u)\cosh(t\phi),z(u)+ct). 
\end{equation}
\end{itemize}

\begin{rmk}
When $\alpha$ is $\alpha(u)=R(\cosh u,\sinh u,0)$ or $\alpha(u)=R(\sinh u,\cosh u,0)$ in the ni-type or  $\alpha(u)=(\pm R,0,z(u))$ in the i-type ($R$ constant), the resulting helicoidal surface is a sphere of cylindrical type, which is not admissible.
\end{rmk}

\begin{proposition}
Assume the generating curve $\alpha$ is neither a hyperbola centered at the origin (in the ni-type) nor an isotropic line (in the i-type). Then, all pseudo-isotropic Lorentzian revolution and helicoidal surfaces are admissible. 
\end{proposition}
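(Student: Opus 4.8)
The plan is to follow the proof of the analogous simply isotropic statement essentially verbatim, replacing circular by hyperbolic trigonometry. As there, I would first reduce to the helicoidal surfaces of type (II): the Lorentzian revolution surfaces of type (I) are recovered by setting $c=0$, and since the admissibility test $X_{12}\neq 0$ — equivalently $\det g_{ij}\neq 0$, because one checks directly that $\det g_{ij}=-X_{12}^2$ in $\mathbb{I}_{\mathrm{p}}^3$ — involves only the top-view components of the tangent vectors, it is insensitive to the value of $c$.

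Next I would compute $\partial_u$ and $\partial_t$ and assemble the induced pseudo-isotropic metric for each of the three parametrizations $Yh_2$, $Zh_2$, $Wh_2$. Using $\cosh^2-\sinh^2=1$ together with $\cosh(t\phi)\pm\sinh(t\phi)=e^{\pm t\phi}$, the mixed terms collapse, and I expect to obtain for the ni-type
$$\mathrm{I}=(x'^2-y'^2)\rmd u^2+2\phi(x'y-xy')\rmd u\,\rmd t-\phi^2(x^2-y^2)\rmd t^2,$$
for the spacelike i-type $\mathrm{I}=x'^2\rmd u^2-\phi^2x^2\rmd t^2$, and for the timelike i-type $\mathrm{I}=-y'^2\rmd u^2+\phi^2y^2\rmd t^2$. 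As a sanity check, the signs of $g_{11}$ here ($+x'^2$ versus $-y'^2$) reconfirm the announced spacelike and timelike characters of the two i-type generating curves.

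The decisive step is to read off the admissibility locus from $\det g_{ij}$. I would find $\det g_{ij}=-\phi^2(xx'-yy')^2$ in the ni-type and $\det g_{ij}=-\phi^2(xx')^2$, respectively $-\phi^2(yy')^2$, in the two i-type cases. Admissibility therefore fails exactly when $xx'-yy'=0$, i.e. $\tfrac{\rmd}{\rmd u}(x^2-y^2)=0$, so that $x^2-y^2=k$ is constant — precisely a hyperbola centered at the origin, i.e. a Lorentzian circle — in the ni-type, and when $x$ (respectively $y$) is constant — precisely an isotropic line $\alpha(u)=(\pm R,0,z(u))$ (respectively $(0,\pm R,z(u))$) — in the i-type. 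Since the hypothesis excludes exactly these curves, each surface under consideration is admissible.

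The main obstacle is the algebraic simplification of the ni-type determinant: one must verify the identity $(x'^2-y'^2)(x^2-y^2)+(x'y-xy')^2=(xx'-yy')^2$ so that $\det g_{ij}$ factors cleanly and its zero set is recognized as the family of hyperbolas $x^2-y^2=k$. Everything else reduces to bookkeeping with hyperbolic identities, exactly parallel to the computation displayed in the proof preceding Remark \ref{rem::SimplyNormalFormHelSurf}.
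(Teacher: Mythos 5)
Your proposal is correct and follows essentially the same route as the paper: reduce to the helicoidal case ($c=0$ recovers revolutions), compute the first fundamental forms of $Yh_2$, $Zh_2$, $Wh_2$ (your expressions match the paper's Eqs. (\ref{eq::PseudoIso1stFFhelicoidalSurfniType})--(\ref{eq::WhPseudoIso1stFFhelicoidalSurfiType})), and read admissibility off the factored determinants $-\phi^2(xx'-yy')^2$, $-\phi^2(xx')^2$, $-\phi^2(yy')^2$, whose vanishing loci are exactly the excluded hyperbolas and isotropic lines. Your two additions — the identity $\det g_{ij}=-X_{12}^2$ linking the metric criterion to the definition of admissibility, and the explicit algebraic identity $(x'^2-y'^2)(x^2-y^2)+(x'y-xy')^2=(xx'-yy')^2$ — are correct and merely make explicit what the paper leaves implicit.
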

\begin{proof}
It is enough to investigate helicoidal surfaces, since revolution surfaces are obtained by imposing $c=0$. In the ni-type %we have 
%$$\left\{
%\begin{array}{lcl}
%\partial_u Yh_2 &=& (x'\cosh(t\phi)+y'\sinh(t\phi),x'\sinh(t\phi)+y'\cosh(t\phi),0) \\
%\partial_t Yh_2 &=& (\phi x\sinh(t\phi)+\phi y\cosh(t\phi),\phi x\cosh(t\phi)+\phi y \sinh(t\phi),c)
%\end{array}
%\right.$$
 the first fundamental form is
\begin{equation}\label{eq::PseudoIso1stFFhelicoidalSurfniType}
\mathrm{I}=(x'\,^2-y'\,^2)\mathrm{d}u^2+2\phi(x'y-xy')\mathrm{d}u\rmd t-\phi^2(x^2-y^2)\rmd t^2.
\end{equation}
The determinant of the induced metric is
\begin{equation}
\det g_{ij} = -\phi^2(xx'-yy')^2=0\Leftrightarrow x^2-y^2=\pm R^2.
\end{equation}
By hypothesis, $\alpha(u)$ is not a hyperbola centered at the origin and, therefore, the corresponding helicoidal surface of ni-type is admissible.

On the other hand, in the i-type %we have 
%$$\left\{
%\begin{array}{lcl}
%\partial_u Zh_2 &=& (x'\cosh(t\phi),x'\sinh(t\phi),z') \\
%\partial_t Zh_2 &=& (\phi x\sinh(t\phi),\phi x\cosh(t\phi),c)
%\end{array}
%\right.$$
 the first fundamental form is
\begin{equation}\label{eq::ZhPseudoIso1stFFhelicoidalSurfiType}
\mathrm{I}=x'\,^2\mathrm{d}u^2-\phi^2x^2\rmd t^2\Rightarrow
\det g_{ij} = -\phi^2(xx')^2=0\Leftrightarrow x^2=R^2.
\end{equation}
By hypothesis, $\alpha(u)$ is not an isotropic line and, therefore, the corresponding helicoidal surface of i-type is admissible. Analogously, for $Wh_2$
\begin{equation}\label{eq::WhPseudoIso1stFFhelicoidalSurfiType}
\mathrm{I}=-y'\,^2\mathrm{d}u^2+\phi^2y^2\rmd t^2\Rightarrow \det g_{ij}=-\phi^2y^2y'\,^2=0\Leftrightarrow y^2= R^2.
\end{equation}
\end{proof}
\begin{rmk}
In analogy to  Remark \ref{rem::SimplyNormalFormHelSurf}, the first fundamental form of helicoidal surfaces of ni- and i-types can be respectively written as $\mathrm{I}=\rmd U^2-\rmd T^2$ and $\mathrm{I}=\rmd U\,\rmd T$ under the change of coordinates
$$
\left\{
\begin{array}{l}
U(u,t)=x(u)+t\phi\, y(u)+U_0\\
T(u,t)=y(u)+t\phi\, x(u)+T_0\\
\end{array}
\right.\mbox{ and }\left\{
\begin{array}{l}
U(u,t)=x(u)+t\phi\, x(u)+U_0\\
T(u,t)=x(u)-t\phi\, x(u)+T_0\\
\end{array}
\right.,
$$
respectively ($U_0,T_0$ constants). Indeed, we just need to proceed by completing squares in Eq. (\ref{eq::PseudoIso1stFFhelicoidalSurfniType}) for surfaces of ni-type and in 
 Eq. (\ref{eq::ZhPseudoIso1stFFhelicoidalSurfiType}) for the i-type. (Notice $\rmd U\,\rmd T$ is mapped to the normal form $\rmd U^2-\rmd T^2$ via $U\mapsto U+T$ and $T\mapsto U-T$.)
\end{rmk}

Pseudo-isotropic invariant surfaces resulting from limit motions are:
\newline
(III) Pseudo-parabolic revolution surfaces:
\begin{itemize}
\item If $\alpha(u)=(x(u),y(u),0)$, we have the revolution surface of ni-type
\begin{equation}
Yh_3(u,t) = (at+x(u),b\,t+y(u),ct+\frac{ac_1+bc_2}{2}\,t^2+c_1t\,x(u)+c_2t\,y(u)); 
\end{equation}
\item If $\alpha(u)=(x(u),0,z(u))$ ($\alpha$ spacelike), we have the surface of i-type
\begin{equation}
Zh_3(u,t) = (a\,t+x(u),b\,t,c\,t+\frac{ac_1+bc_2}{2}\,t^2+c_1t\,x(u)+z(u)); 
\end{equation}
\item If $\alpha(u)=(0,y(u),z(u))$ ($\alpha$ timelike), we have the surface of i-type
\begin{equation}
Wh_3(u,t) = (a\,t,b\,t+y(u),c\,t+\frac{ac_1+bc_2}{2}\,t^2+c_2t\,y(u)+z(u)). 
\end{equation}
\end{itemize}
(IV) Warped translation surfaces ($c,ac_1+bc_2=0$; $(a,b),(c_1,c_2)\not=(0,0)$):
\begin{itemize}
\item If $\alpha(u)=(x(u),y(u),0)$, we have the warped surface of ni-type
\begin{equation}
Yh_4(u,t) = (a\,t+x(u),b\,t+y(u),c_1t\,x(u)+c_2t\,y(u)); 
\end{equation}
\item If $\alpha(u)=(x(u),0,z(u))$ ($\alpha$ spacelike), we have the warped surface of i-type
\begin{equation}
Zh_4(u,t) = (a\,t+x(u),b\,t,c_1\,t\,x(u)+z(u)); 
\end{equation}
\item If $\alpha(u)=(0,y(u),z(u))$ ($\alpha$ timelike), we have the warped surface of i-type
\begin{equation}
Wh_4(u,t) = (a\,t,b\,t+y(u),c_2\,t\,y(u)+z(u)), 
\end{equation}
\end{itemize}
(V) Pseudo-isotropic shear surfaces ($c,a,b=0$; $(c_1,c_2)\not=(0,0)$):
\begin{itemize}
\item If $\alpha(u)=(x(u),y(u),0)$, we have the shear surface of ni-type
\begin{equation}
Yh_5(u,t) = (x(u),y(u),c_1t\,x(u)+c_2t\,y(u)); 
\end{equation}
\item If $\alpha(u)=(x(u),0,z(u))$ ($\alpha$ spacelike), we have the shear surface of i-type
\begin{equation}
Zh_5(u,t) = (x(u),0,c_1t\,x(u)+z(u));
\end{equation}
\item If $\alpha(u)=(0,y(u),z(u))$ ($\alpha$ timelike), we have the shear surface of i-type
\begin{equation}
Wh_5(u,t) = (0,y(u),c_2t\,y(u)+z(u)).
\end{equation}
\end{itemize}
(VI) Translation surfaces along non-isotropic directions ($c,c_i=0$; $(a,b)\not=(0,0)$):
\begin{itemize}
\item If $\alpha(u)=(x(u),y(u),0)$, we have the translation surface of ni-type
\begin{equation}
Yh_6(u,t) = (a\,t+x(u),b\,t+y(u),0), 
\end{equation}
that represents a portion of a non-isotropic plane;
\item If $\alpha(u)=(x(u),0,z(u))$ ($\alpha$ spacelike), we have the surface of i-type
\begin{equation}
Zh_6(u,t) = (a\,t+x(u),b\,t,z(u)); 
\end{equation}
\item If $\alpha(u)=(0,y(u),z(u))$ ($\alpha$ timelike), we have the surface of i-type
\begin{equation}
Wh_6(u,t) = (a\,t,b\,t+y(u),z(u)). 
\end{equation}
Both $Zh_6$ and $Wh_6$ represent a cylinder with non-isotropic rulings.
\end{itemize}
(VII) Translation surfaces along an isotropic direction ($a,b,c_i=0$; $c\not=0$):
\begin{itemize}
\item If $\alpha(u)=(x(u),y(u),0)$, we have the translation surface of ni-type
\begin{equation}
Yh_7(u,t) = (x(u),y(u),c\,t), 
\end{equation}
that represents a cylindrical surface with isotropic rulings;
\item If $\alpha(u)=(x(u),0,z(u))$ ($\alpha$ spacelike), we have the surface of i-type
\begin{equation}
Zh_7(u,t) = (x(u),0,z(u)+c\,t), 
\end{equation}
that represents a portion of an isotropic plane;
\item If $\alpha(u)=(0,y(u),z(u))$ ($\alpha$ timelike), we have the translation surface of i-type
\begin{equation}
Wh_7(u,t) = (0,y(u),z(u)+c\,t), 
\end{equation}
that also represents a portion of an isotropic plane.
\end{itemize}
\begin{rmk}
Notice that isotropic shear surfaces and translation surfaces along isotropic directions, types (V) and (VII) above, are not admissible (here $a=0=b$). In addition, if $\alpha(u)=(au,bu,0)$, then the corresponding  parabolic surface of ni-type is of the form $Yh_3=(aU,bU,Z(U,V))$, where $U=t+u$ and $V=t-u$. Rewriting it as $Yh_3=U\,(a,b,0)+Z\,\mathbf{e}_3$ clearly shows $Y_3$ is a portion of an isotropic plane. Similar conclusions apply for parabolic revolution surfaces of i-type when $\alpha(u)$ is an isotropic line or when $b=0$.
\end{rmk}

\begin{proposition}\label{prop::PseudoIsoAdmissSurfForGrenzebewegung}
Assume that $a,b$ do not vanish simultaneously and also that the generating curve $\alpha$ is neither a part of a straight line $\{bX-aY=k,Z=0\}$ (in the ni-type) nor a part of an isotropic line or $b\not=0$ (in the i-type). Then, all pseudo-isotropic parabolic revolution surfaces, warped translations surfaces, and translation surfaces along non-isotropic directions are admissible. 
\end{proposition}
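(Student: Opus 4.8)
The plan is to follow the proof of Proposition \ref{prop::SimplyIsoAdmissSurfForGrenzebewegung} verbatim in structure, exploiting that the pseudo-isotropic limit motions of Eq. \eqref{eq::PseudoGrenzbewegungen} have the same $4\times4$ matrix shape as those of Eq. \eqref{eq::Grenzbewegungen}, the only change being the index-$1$ inner product $\langle\cdot,\cdot\rangle_{pz}$ on the top view. The key simplification is that the coefficients $g_{ij}=\langle\mathbf{x}_i,\mathbf{x}_j\rangle_{pz}$ of the first fundamental form involve only the first two coordinates of $\mathbf{x}$, so the parameters $c,c_1,c_2$ — which enter $Yh_3$, $Zh_3$, $Wh_3$ exclusively through the $z$-coordinate — never affect $\det g_{ij}$. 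Consequently the warped translation surfaces (IV) and the non-isotropic translation surfaces (VI) share the very same induced metric as the parabolic revolution surfaces (III), and it is enough to compute $\det g_{ij}$ for type (III).

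For the ni-type surface $Yh_3$ I would record $\partial_u Yh_3=(x',y',\ast)$ and $\partial_t Yh_3=(a,b,\ast)$, where $\ast$ denotes the irrelevant $z$-entries, and assemble $\mathrm{I}=(x'^2-y'^2)\,\mathrm{d}u^2+2(ax'-by')\,\mathrm{d}u\,\mathrm{d}t+(a^2-b^2)\,\mathrm{d}t^2$. Expanding (or completing the Lorentzian square) gives $\det g_{ij}=-(bx'-ay')^2=-X_{12}^2$; the sole difference from the $\mathbb{I}^3$ computation is this overall minus sign, coming from the index, which confirms en passant that the admissible surface is timelike. Since $\det g_{ij}=0\Leftrightarrow X_{12}=0\Leftrightarrow bx-ay=k$, the hypothesis excluding the straight line $\{bX-aY=k,\,Z=0\}$ yields admissibility.

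For the i-type I would split into the two causal characters that are genuinely distinct in $\mathbb{I}_{\mathrm{p}}^3$. The spacelike generating curve gives $Zh_3$ with $\mathrm{I}=x'^2\,\mathrm{d}u^2+2ax'\,\mathrm{d}u\,\mathrm{d}t+(a^2-b^2)\,\mathrm{d}t^2$ and $\det g_{ij}=-b^2x'^2=-X_{12}^2$, degenerate exactly when $b=0$ or $x$ is constant (an isotropic line). Symmetrically, the timelike curve gives $Wh_3$ with $\mathrm{I}=-y'^2\,\mathrm{d}u^2-2by'\,\mathrm{d}u\,\mathrm{d}t+(a^2-b^2)\,\mathrm{d}t^2$ and $\det g_{ij}=-a^2y'^2=-X_{12}^2$, degenerate exactly when $a=0$ or $y$ is constant. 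In all three sub-cases $\det g_{ij}=-X_{12}^2\le 0$, so non-degeneracy of the (necessarily index-$1$) induced metric is equivalent to the single scalar condition $X_{12}\neq0$.

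The one point I would watch carefully — and the only place the pseudo-isotropic argument departs from the simply isotropic one — is that the terse i-type hypothesis must be read symmetrically across the two families: the spacelike $Zh_3$ needs $b\neq0$ with $x$ non-constant, whereas the timelike $Wh_3$ needs $a\neq0$ with $y$ non-constant. Because $a,b$ are assumed not to vanish together, each potential degeneracy is ruled out in its respective family, and the proposition follows. No step is technically hard; the work is purely in tracking which of $a$ or $b$ governs admissibility in each sub-case.
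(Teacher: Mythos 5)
Your proposal is correct and takes essentially the same route as the paper: reduce to the parabolic revolution surfaces (type III), compute the first fundamental form of $Yh_3$, $Zh_3$, $Wh_3$, and read off $\det g_{ij}=-(bx'-ay')^2$, $-b^2x'^2$, $-a^2y'^2$ respectively, so admissibility follows from the stated hypotheses. Your treatment of the timelike case is in fact slightly more careful than the paper's, whose displayed equivalence for $Wh_3$ ends in a typo ($x^2=R^2$ instead of $a=0$ or $y$ constant), and your observation that $c,c_1,c_2$ enter only the $z$-coordinate makes the reduction from types (IV) and (VI) to type (III) explicit.
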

\begin{proof}
It is enough to investigate the parabolic revolution surfaces, since the other surfaces are obtained by imposing some kind of restriction on the constants $a,b,c,c_1,$ and $c_2$. In the ni-type %we have 
%$$\left\{
%\begin{array}{lcl}
%\partial_u Yh_3 &=& (x',y',c_1tx'+c_2ty') \\
%\partial_t Yh_3 &=& (a,b,c+(ac_1+bc_2)t+c_1x+c_2y)
%\end{array}
%\right.$$
 the first fundamental form is
\begin{equation}\label{eq::PseudoIso1stFFParaRevSurfniType}
\mathrm{I}=(x'\,^2-y'\,^2)\mathrm{d}u^2+2(ax'-by')\mathrm{d}u\rmd t+(a^2-b^2)\rmd t^2.
\end{equation}
The determinant of the induced metric is
\begin{equation}
\det g_{ij} = -(bx'-ay')^2=0\Leftrightarrow bx-ay=k.
\end{equation}
By hypothesis, $\alpha(u)$ is not a straight line with neither slope $b/a$ nor $a/b$  and, thus, the corresponding parabolic revolution surface of ni-type is admissible.

On the other hand, in the i-type 
%$$\left\{
%\begin{array}{lcl}
%\partial_u Zh_3 &=& (x',0,c_1tx'+z') \\
%\partial_t Zh_3 &=& (a,b,c+(ac_1+bc_2)t+c_1x)
%\end{array}
%\right.$$
  the first fundamental form is
\begin{equation}\label{eq::ZhPseudoIso1stFFParaRevSurfiType}
\mathrm{I}=x'^2\mathrm{d}u^2+2ax'\rmd u\rmd t+(a^2-b^2)\rmd t^2
\Rightarrow\det g_{ij} = -b^2x'^2=0\Leftrightarrow b=0\,\mbox{or}\,x=k.
\end{equation}
By hypothesis, $\alpha(u)$ is neither an isotropic line nor $b\not=0$ and, therefore, the corresponding parabolic revolution surface of i-type is admissible.

The computations for $Wh_3$ are similar:
\begin{equation}\label{eq::WhPseudoIso1stFFParaRevSurfiType}
\mathrm{I}=-y'\,^2\mathrm{d}u^2-2by'\rmd u\rmd t+(a^2-b^2)\rmd t^2\Rightarrow \det g_{ij}=-a^2y'\,^2=0\Leftrightarrow x^2=R^2.
\end{equation}
\end{proof}
\begin{rmk}
In analogy to Remark \ref{rem::SimplyNormalFormParaSurf}, the first fundamental form of parabolic revolution surfaces of ni- and i-types can be written as $\mathrm{I}=\rmd U^2-\rmd T^2$ under the change of coordinates
$$
\left\{
\begin{array}{l}
U(u,t)=x(u)+at+U_0\\
T(u,t)=y(u)+bt+T_0\\
\end{array}
\right.\mbox{ and }\left\{
\begin{array}{l}
U(u,t)=x(u)+at+U_0\\
T(u,t)=bt\phi\, x(u)+T_0\\
\end{array}
\right.,
$$
respectively ($U_0,T_0$ constants). Indeed, we just proceed by completing squares  in Eq. (\ref{eq::PseudoIso1stFFParaRevSurfniType}) for surfaces of ni-type and  in 
 Eq. (\ref{eq::ZhPseudoIso1stFFParaRevSurfiType}) for surfaces of i-type.
\end{rmk}

Concerning the most general 1-parameter subgroup of pseudo-isotropic motion in Eq. (\ref{eq::MostGeneral1parPseudoIsoMotion}), we have the following invariant surfaces of ni- and i-types
\begin{equation}
Yh_8(u,t) = [x\,\cosh(t\phi)-y\,\sinh(t\phi)+aCh_t-bSh_t]\mathbf{e}_1+
\end{equation}
$$+ [x\sinh(t\phi)+y\cosh(t\phi)+bCh_t+aSh_t]\mathbf{e}_2+[ct+(ac_1+bc_2)\tilde{Ch}_t+$$
$$+(ac_2-bc_1)\tilde{Sh}_t+(c_1Ch_t+c_2Sh_t)x+(c_2Ch_t-c_1Sh_t)y]\mathbf{e}_3,$$
if the generating curve is $\alpha(u)=(x(u),y(u),0)$, and
\begin{equation}
Zh_8(u,t) = [x(u)\,\cosh(t\phi)+aCh_t-bSh_t]\mathbf{e}_1+
\end{equation}
$$+[x(u)\,\sinh(t\phi)+bCh_t+aSh_t]\mathbf{e}_2+[c\,t+(ac_1+bc_2)\tilde{Ch}_t+$$
$$+ (ac_2-bc_1)\tilde{Sh}_t+(c_1Ch_t+c_2Sh_t)x(u)+z(u)]\mathbf{e}_3, 
$$
if the generating curve is $\alpha(u)=(x(u),0,z(u))$.

Notice that given a point $q\in \mathbb{I}_{\mathrm{p}}^3$, the orbits of the 1-parameter subgroups $\psi_t$ of type (I), (II), and (III), i.e., the curves $t\mapsto\psi_t(q)$, are Lorentzian circles, Lorentzian  helices with an isotropic screw axis, and parabolas (isotropic circles), respectively. For the other types of 1-parameter subgroups, i.e., when $\phi=0$ and $(ac_1+bc_2)=0$, the orbits of the corresponding 1-parameter subgroups of limit motions are straight lines and, therefore, 
\begin{proposition}
Invariant surfaces in $\mathbb{I}_{\mathrm{p}}^3$ of type $\mathrm{(IV)-(VII)}$ are ruled. \label{Prop::PseudoIsoInvarRuledSurf}
\end{proposition}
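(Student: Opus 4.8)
The plan is to reduce the statement to the single observation that, for types (IV)--(VII), the orbit $t\mapsto\psi_t(q)$ of every point $q$ is a straight line; since any invariant surface is parameterized by $\mathbf{x}(u,t)=\psi_t(\alpha(u))$, its $t$-parameter curves are exactly these orbits and hence supply a ruling.

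First I would verify that each of types (IV)--(VII) is a limit motion (so $\phi=0$) satisfying $(ac_1+bc_2)=0$. This is immediate from the defining conditions listed just after Theorem \ref{thr::PseudoIsoG51parsubgroup}: type (IV) imposes $ac_1+bc_2=0$ directly, types (V) and (VII) have $(a,b)=(0,0)$, and type (VI) has $(c_1,c_2)=(0,0)$; in every case $ac_1+bc_2=0$, which is precisely what makes the quadratic-in-$t$ term in the $(3,4)$-entry of the matrix in Eq. (\ref{eq::PseudoGrenzbewegungen}) vanish.

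Next I would apply the resulting matrix to an arbitrary point $q=(x_0,y_0,z_0)$, obtaining
\begin{equation*}
\psi_t(q)=(x_0+at,\;y_0+bt,\;z_0+(c_1x_0+c_2y_0+c)\,t)=q+t\,(a,b,\,c_1x_0+c_2y_0+c).
\end{equation*}
The direction vector $(a,b,c_1x_0+c_2y_0+c)$ depends on $q$ but is independent of $t$, so the orbit is a straight line through $q$. Finally, fixing $u=u_0$ in $\mathbf{x}(u,t)=\psi_t(\alpha(u))$ recovers the orbit of $\alpha(u_0)$, a straight line by the above; letting $u_0$ vary sweeps out a one-parameter family of lines covering $M$, so $M$ is ruled.

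The argument is essentially a one-line computation once the orbits are identified, so there is no genuine obstacle; the only point requiring care is the bookkeeping confirming $ac_1+bc_2=0$ across all four types, since it is exactly this vanishing that demotes the orbit from a parabola (as in type (III), where $ac_1+bc_2\neq0$) to a straight line. The same reasoning proves the simply isotropic analogue, Proposition \ref{Prop::SimpIsoInvarRuledSurf}, word for word, with the limit-motion matrix of Theorem \ref{thr::SimplyIsoG51parsubgroup} in place of that of Theorem \ref{thr::PseudoIsoG51parsubgroup}.
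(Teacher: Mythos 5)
Your proof is correct and is essentially the paper's own argument: the paper justifies this proposition entirely by the remark immediately preceding it, namely that for limit motions with $\phi=0$ and $ac_1+bc_2=0$ the orbits $t\mapsto\psi_t(q)$ are straight lines, so the $t$-parameter curves of $\mathbf{x}(u,t)=\psi_t(\alpha(u))$ furnish the ruling. Your explicit computation of $\psi_t(q)=q+t\,(a,b,c_1x_0+c_2y_0+c)$ and the case-by-case verification that $ac_1+bc_2=0$ across types (IV)--(VII) simply spell out the details the paper leaves implicit.
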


\begin{example}[Spheres as invariant surfaces]\label{exe::PseudoSphrAsInvSurf}
Let $p\not=0$ be a constant and $\alpha(u)=(pu,0,\frac{pu^2}{2})$ be an isotropic circle. Applying a hyperbolic revolution to $\alpha$ gives
$$Zh_2(u,t)=(pu\cosh(t\phi),pu\sinh(t\phi),\frac{pu^2}{2}),$$
which parameterizes the sphere of parabolic type $\Sigma_1^2(p):z=\frac{1}{2p}(x^2-y^2)$. On the other hand, applying a parabolic revolution  with $c=0$, $a=pc_1$, and $b=-pc_2$,
$$Zh_3(u,t)=(at+pu,bt,\frac{(ac_1+bc_2)}{2}t^2+c_1tpu+\frac{pu^2}{2}),$$
which also parameterizes the sphere of parabolic type $\Sigma_1^2(p)$.

Similarly, the sphere of parabolic type $z=\frac{1}{2p}(y^2-x^2)$ can be obtained from both Lorentzian revolution and parabolic revolution (with $c=0$, $a=-pc_1$, and $b=pc_2$). See also example \ref{exe::SphrAsInvSurf} and the comment following it.
\end{example}

\section{Differential geometry of simply isotropic invariant surfaces}
\label{sec::DGSimplyInvSurf}

Now we compute the first and second fundamental forms of admissible invariant surfaces described in Subsect. \ref{subsec::SimplyInvSurf}, from which we derive the Gaussian and mean curvatures of simply isotropic helicoidal invariant surfaces in Subsect. \ref{subsec::DGSimplyInvHelSurf} and of invariant parabolic revolution  surfaces in Subsect. \ref{subsec::DGSimplyInvParaSurf}. In addition,  we solve both the prescribed Gaussian and mean  curvatures problems for helicoidal (Theorems \ref{thr::SimplyPrescKhelSurfiType} and \ref{thr::SimplyPrescHhelSurfiType}) and parabolic revolution (Theorems \ref{thr::SimplyPrescKParaSurfiType} and \ref{thr::SimplyPrescHParaSurfiType}) surfaces of i-type. On the other hand, for surfaces of ni-type we only solve the problem of prescribed Gaussian curvature for helicoidal surfaces (Theorem \ref{thr::SimplyPrescKhelSurfNiType}).

\subsection{Helicoidal and Euclidean revolution surfaces}
\label{subsec::DGSimplyInvHelSurf}

The first fundamental form of helicoidal surfaces of ni-type is given by Eq. (\ref{eq::SimplyIso1stFFhelicoidalSurfniType}):
$$\mathrm{I}=(x'\,^2+y'\,^2)\mathrm{d}u^2+2\phi(xy'-x'y)\mathrm{d}u\rmd t+\phi^2(x^2+y^2)\rmd t^2.$$

For the second fundamental form, we first compute
$$\partial_uY_2\times\partial_tY_2 = (cx'\sin(t\phi)+cy'\cos(t\phi),-cx'\cos(t\phi)+cy'\sin(t\phi),\phi(xx'+yy')).$$
Thus, the relative normal is  given by
\begin{equation}
N_h = \left(\frac{c}{\phi}\frac{x'\sin(t\phi)+y'\cos(t\phi)}{xx'+yy'},\frac{c}{\phi}\frac{-x'\cos(t\phi)+y'\sin(t\phi)}{xx'+yy'},1\right).
\end{equation}

Taking the Euclidean inner product of $N_h$ with the second derivatives of $Y_2$,
$$\partial_u^2Y_2 = (x''\cos(t\phi)-y''\sin(t\phi),x''\sin(t\phi)+y''\cos(t\phi),0),
$$
$$
\partial_{ut}^2Y_2 = (-\phi x'\sin(t\phi)-\phi y'\cos(t\phi),\phi x'\cos(t\phi)-\phi y'\sin(t\phi),0),
$$
and
$$\partial_t^2Y_2 = (-\phi^2 x\cos(t\phi)+\phi^2 y\sin(t\phi),-\phi^2x\sin(t\phi)-\phi^2y\cos(t\phi),0),
$$
gives the second fundamental form
\begin{equation}
\mathrm{II}=\frac{c(x''y'-x'y'')\rmd u^2-2c\phi(x'^2+y'^2)\rmd u\rmd t+c\phi^2(x'y-xy')\rmd t^2}{\phi(xx'+yy')}.
\end{equation}

\begin{proposition}
The Gaussian and mean curvatures of an admissible helicoidal surface of ni-type are respectively 
\begin{equation}
K=\frac{c^2}{\phi^2}\frac{(x'y-xy')(x''y'-x'y'')-(x'\,^2+y'\,^2)^2}{(xx'+yy')^4}
\end{equation}
and
\begin{equation}
H = \frac{c}{\phi}\frac{(x^2+y^2)(x''y'-x'y'')+(x'\,^2+y'\,^2)(xy'-x'y)}{2(xx'+yy')^3}.
\end{equation}
\end{proposition}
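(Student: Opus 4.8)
The proof is a direct substitution into the general curvature formulas established in Section \ref{subsec::IsotropicSurf}, so the plan is purely computational. First I would read off the metric coefficients from the first fundamental form in Eq. (\ref{eq::SimplyIso1stFFhelicoidalSurfniType}), namely
\begin{equation}
g_{11}=x'^2+y'^2,\quad g_{12}=\phi(xy'-x'y),\quad g_{22}=\phi^2(x^2+y^2),\nonumber
\end{equation}
and the coefficients of $\mathrm{II}$ from the displayed second fundamental form,
\begin{equation}
h_{11}=\frac{c(x''y'-x'y'')}{\phi(xx'+yy')},\quad h_{12}=\frac{-c(x'^2+y'^2)}{xx'+yy'},\quad h_{22}=\frac{c\phi(x'y-xy')}{xx'+yy'}.\nonumber
\end{equation}
The single nontrivial simplification is the determinant of the induced metric: expanding $(x'^2+y'^2)(x^2+y^2)-(xy'-x'y)^2$ and collecting terms yields the Lagrange-type identity $(xx'+yy')^2$, so that $g_{11}g_{22}-g_{12}^2=\phi^2(xx'+yy')^2$. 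This is precisely the quantity already computed in the admissibility discussion preceding Eq. (\ref{eq::SimplyIso1stFFhelicoidalSurfniType}), so I may simply invoke it.

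For the Gaussian curvature I would substitute into $K=(h_{11}h_{22}-h_{12}^2)/(g_{11}g_{22}-g_{12}^2)$. The numerator $h_{11}h_{22}-h_{12}^2$ produces a common factor $c^2/(xx'+yy')^2$ multiplying $(x''y'-x'y'')(x'y-xy')-(x'^2+y'^2)^2$; dividing by $\phi^2(xx'+yy')^2$ gives the stated expression for $K$ with the fourth power $(xx'+yy')^4$ in the denominator. For the mean curvature I would substitute into $H=\tfrac{1}{2}(g_{11}h_{22}-2g_{12}h_{12}+g_{22}h_{11})/(g_{11}g_{22}-g_{12}^2)$. Each of the three terms in the numerator carries the common factor $c\phi/(xx'+yy')$, and the bracket reduces to $(x'^2+y'^2)(x'y-xy')+2(xy'-x'y)(x'^2+y'^2)+(x^2+y^2)(x''y'-x'y'')$.

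The one place requiring care is the collapse of the first two of these three terms. Using $x'y-xy'=-(xy'-x'y)$, the term $(x'^2+y'^2)(x'y-xy')$ cancels against one copy of $2(xy'-x'y)(x'^2+y'^2)$, leaving a single contribution $(x'^2+y'^2)(xy'-x'y)$; together with $(x^2+y^2)(x''y'-x'y'')$ this reproduces the numerator in the claimed formula for $H$. Dividing the common prefactor $c\phi/(xx'+yy')$ by $\phi^2(xx'+yy')^2$ and including the overall $\tfrac{1}{2}$ yields $c/(2\phi)$ times the bracket over $(xx'+yy')^3$, as asserted. No step is conceptually deep; the only thing to watch is keeping the sign conventions in $xy'-x'y$ versus $x'y-xy'$ consistent so that the three mean-curvature terms combine to two rather than to a longer expression.
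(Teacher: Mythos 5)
Your computation is correct and follows exactly the route the paper itself takes: the proposition is stated immediately after the first and second fundamental forms are computed, and its (implicit) proof is precisely the substitution of $g_{ij}$ and $h_{ij}$ into $K=(h_{11}h_{22}-h_{12}^2)/\det g_{ij}$ and $H=\tfrac{1}{2}(g_{11}h_{22}-2g_{12}h_{12}+g_{22}h_{11})/\det g_{ij}$, with the Lagrange identity giving $\det g_{ij}=\phi^2(xx'+yy')^2$. Your sign bookkeeping in collapsing the three mean-curvature terms to two is also right, so there is nothing to add.
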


On the other hand, the first fundamental form of a helicoidal surface of i-type is given by Eq. (\ref{eq::SimplyIso1stFFhelicoidalSurfiType}):
$$\mathrm{I}=x'\,^2\mathrm{d}u^2+\phi^2x^2\rmd t^2.$$

For the second fundamental form, we first compute
$$\partial_uZ_2\times\partial_tZ_2 = (cx'\sin(t\phi)-\phi xz'\cos(t\phi),-cx'\cos(t\phi)-\phi xz'\sin(t\phi),\phi xx').
$$
Thus, the relative normal is given by
\begin{equation}
N_h = \left(\frac{cx'\sin(t\phi)-\phi xz'\cos(t\phi)}{\phi xx'},\frac{-cx'\cos(t\phi)-\phi xz'\sin(t\phi)}{\phi xx'},1\right).
\end{equation}

Taking the Euclidean inner product of $N_h$ with the second derivatives of $Z_2$,
$$\partial_u^2Z_2 = (x''\cos(t\phi),x''\sin(t\phi),z''),
\,\partial_{ut}^2Z_2 = (-\phi x'\sin(t\phi),\phi x'\cos(t\phi),0),
$$
and
$$\partial_t^2Z_2 = (-\phi^2 x\cos(t\phi),-\phi^2x\sin(t\phi),0),
$$
gives the second fundamental form
\begin{equation}
\mathrm{II}=\frac{-x(x''z'-x'z'')\rmd u^2-2c\, x'\,^2\rmd u\rmd t+\phi^2 x^2z'\rmd t^2}{xx'}.
\end{equation}

\begin{proposition}
The Gaussian and mean curvatures of an admissible  helicoidal surface of i-type are respectively
\begin{equation}
K=-\frac{z'(x''z'-x'z'')}{xx'\,^4}-\frac{c^2}{\phi^2x^4}
\mbox{ and }
H = \frac{x'\,^2z'-x(x''z'-x'z'')}{2xx'\,^3}.
\end{equation}
\end{proposition}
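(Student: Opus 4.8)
The plan is to apply directly the coordinate formulas for $K$ and $H$ recorded just after Eq.~(\ref{eq::CoefShapeOpe}), since both fundamental forms of $Z_2$ have already been assembled above. First I would read off the metric coefficients from the first fundamental form $\mathrm{I}=x'^2\rmd u^2+\phi^2x^2\rmd t^2$ in Eq.~(\ref{eq::SimplyIso1stFFhelicoidalSurfiType}), namely $g_{11}=x'^2$, $g_{12}=0$, and $g_{22}=\phi^2x^2$, so that the common denominator throughout is $g_{11}g_{22}-g_{12}^2=\phi^2x^2x'^2$. The fact that the coordinate lines are orthogonal ($g_{12}=0$) is precisely what keeps the subsequent algebra manageable.

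Next I would extract the coefficients of the second fundamental form. The one place that demands care is the off-diagonal entry: since $\mathrm{II}=h_{ij}\rmd u^i\rmd u^j$ carries the cross term as $2h_{12}\rmd u\,\rmd t$, the displayed numerator $-2cx'^2$ yields $h_{12}=-cx'/x$ (not $-2cx'/x$) after dividing by $xx'$. Reading off the remaining entries gives $h_{11}=-(x''z'-x'z'')/x'$ and $h_{22}=\phi^2xz'/x'$.

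Then I would substitute into $K=(h_{11}h_{22}-h_{12}^2)/(g_{11}g_{22}-g_{12}^2)$ and $H=\tfrac12(g_{11}h_{22}-2g_{12}h_{12}+g_{22}h_{11})/(g_{11}g_{22}-g_{12}^2)$. For $K$ the numerator $h_{11}h_{22}-h_{12}^2$ splits into the two pieces $-\phi^2xz'(x''z'-x'z'')/x'^2$ and $-c^2x'^2/x^2$; dividing each by $\phi^2x^2x'^2$ produces exactly the two summands of the claimed expression, with the helicoidal contribution $-c^2/(\phi^2x^4)$ isolated as a clean separate term. For $H$ the $g_{12}$ term drops out entirely, and factoring $\phi^2x/x'$ from the surviving numerator $g_{11}h_{22}+g_{22}h_{11}$ cancels against the denominator to leave the stated $H=[x'^2z'-x(x''z'-x'z'')]/(2xx'^3)$.

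No conceptual difficulty arises here: every geometric input --- the relative normal $N_h$, the three second derivatives of $Z_2$, and both fundamental forms --- is already in hand, so the proposition reduces to substitution followed by simplification. The only genuine pitfall is bookkeeping, chiefly the factor of two hidden in $h_{12}$ together with keeping the powers of $x$ and $x'$ straight during cancellation. As a consistency check I would verify that setting $c=0$ recovers the Euclidean-revolution limit and that the $\phi$-dependence enters $K$ and $H$ in the way the isometry invariance of these curvatures demands.
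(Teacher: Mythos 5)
Your proposal is correct and is exactly the paper's (implicit) argument: the paper computes $\mathrm{I}$ and $\mathrm{II}$ for $Z_2$ immediately before the proposition and leaves the substitution into $K=(h_{11}h_{22}-h_{12}^2)/(g_{11}g_{22}-g_{12}^2)$ and $H=\tfrac12(g_{11}h_{22}-2g_{12}h_{12}+g_{22}h_{11})/(g_{11}g_{22}-g_{12}^2)$ as the proof. Your reading of the coefficients, including the factor-of-two care giving $h_{12}=-cx'/x$, and the resulting simplifications match the stated formulas.
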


Notice that by imposing the condition $c=0$  we obtain the Euclidean revolution surfaces. Revolution surfaces of ni-type are necessarily contained in the $xy$-plane and, then, $K=0=H$. On the other hand, for helicoidal surfaces of i-type the mean curvature does not depend on the value of $c$, $H_{c=0}=H_{c\not=0}$, only their Gaussian curvature depends on $c$. Thus, we have

\begin{corollary}
The Gaussian and mean curvatures of an admissible simply isotropic Euclidean revolution surface of i-type are respectively
\begin{equation}
K=-\frac{z'(x''z'-x'z'')}{xx'\,^4}\mbox{ and }
H = \frac{x'\,^2z'-x(x''z'-x'z'')}{2xx'\,^3}.
\end{equation}
\end{corollary}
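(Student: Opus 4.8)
The plan is to exploit the observation, already recorded in the sentence preceding the statement, that Euclidean revolution surfaces of i-type arise from helicoidal surfaces of i-type by imposing $c=0$. Concretely, the i-type helicoidal parameterization $Z_2(u,t)=(x(u)\cos(t\phi),x(u)\sin(t\phi),z(u)+ct)$ reduces, upon setting $c=0$, to exactly the revolution parameterization $Z_1(u,t)=(x(u)\cos(t\phi),x(u)\sin(t\phi),z(u))$. Hence the Euclidean revolution surface of i-type is a particular member of the family of helicoidal surfaces of i-type, and I can simply specialize the formulas already obtained in the immediately preceding Proposition rather than redo any computation of fundamental forms.

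First I would verify that admissibility is inherited. The helicoidal surface of i-type was shown admissible precisely when the generating curve $\alpha$ is not an isotropic line, i.e.\ when $x$ is non-constant so that $xx'\neq 0$; setting $c=0$ does not alter this condition, since the determinant of the induced metric in Eq.~(\ref{eq::SimplyIso1stFFhelicoidalSurfiType}) is $\det g_{ij}=\phi^2(xx')^2$, independent of $c$. Therefore the denominators appearing in the Proposition remain nonzero and the closed-form expressions stay valid after specialization.

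The remaining step is a direct substitution of $c=0$ into the Proposition's formulas
$$
K=-\frac{z'(x''z'-x'z'')}{xx'\,^4}-\frac{c^2}{\phi^2x^4},\quad
H=\frac{x'\,^2z'-x(x''z'-x'z'')}{2xx'\,^3}.
$$
In the Gaussian curvature the single $c$-dependent term $-c^2/(\phi^2 x^4)$ vanishes identically, leaving precisely the claimed $K=-z'(x''z'-x'z'')/(xx'\,^4)$. In the mean curvature no factor of $c$ occurs at all, so $H$ is simply unchanged, which is the asserted identity $H_{c=0}=H_{c\neq 0}$; this is also why the statement can be phrased as a Corollary rather than an independent Proposition.

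There is essentially no obstacle here, since the heavy lifting (computing $N_h$, $\mathrm{II}$, and then $K$ and $H$ via the trace and determinant of the shape operator) was carried out in the preceding Proposition. The only point meriting a word of care is conceptual rather than computational: one must confirm that the revolution surface genuinely is the $c=0$ limit of the helicoidal family at the level of the surface itself, not merely formally in the curvature expressions; this is immediate from comparing $Z_1$ and $Z_2$. I would therefore present the proof as a one-line specialization with a brief remark on admissibility.
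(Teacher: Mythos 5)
Your proposal is correct and coincides with the paper's own derivation: the paper obtains this corollary precisely by setting $c=0$ in the preceding proposition for helicoidal surfaces of i-type, noting that only the Gaussian curvature carries a $c$-dependent term while $H_{c=0}=H_{c\not=0}$. Your additional check that admissibility ($\det g_{ij}=\phi^2(xx')^2\not=0$) is independent of $c$ is a sensible, if implicit in the paper, confirmation of the same argument.
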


\subsubsection{Simply isotropic helicoidal surfaces with prescribed curvature}

\begin{thm}\label{thr::SimplyPrescKhelSurfNiType}
Let $K(s)$ be  continuous. There exists a 2-parameter family of helicoidal surfaces of ni-type with $K$ as the Gaussian curvature and whose generating curves $\alpha_{k_0,k_1}(s)$, $s$ arc-length parameter, are implicitly given by
\begin{equation}
 x^2(s)+y^2(s) = k_0 +2\int_{s_0}^{s}\left(k_1+\frac{3\phi^2}{c^2}\int_{s_0}^vK(w)\,\rmd w\right)^{-\frac{1}{3}}\rmd v,
\end{equation}
where $k_i$ ($i=1,2$) is a constant depending on the values of $x,y,x',y'$ at $s=s_0$. 
\end{thm}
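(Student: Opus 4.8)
The plan is to reduce the prescribed-curvature condition to a single ordinary differential equation for the scalar $\rho^2:=x^2+y^2$ along the generating curve, and then to integrate it twice. First I would parametrize $\alpha$ by its isotropic arc length $s$; since $\alpha$ lies in the $xy$-plane, where the simply isotropic metric coincides with the Euclidean one, this is simply the condition $x'^2+y'^2=1$. Feeding this into the Gaussian curvature formula established for helicoidal surfaces of ni-type above, the numerator becomes $(x'y-xy')(x''y'-x'y'')-1$ while the denominator is $(xx'+yy')^4$, so the whole problem is now phrased in terms of $x,y$ and their $s$-derivatives.

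The decisive step is an algebraic identity that holds under arc-length parametrization. Differentiating $x'^2+y'^2=1$ gives $x'x''+y'y''=0$, so $(x'',y'')$ is a multiple of the rotated tangent $(-y',x')$; writing $x''=-\kappa y'$ and $y''=\kappa x'$ with $\kappa$ the signed curvature, one obtains $x''y'-x'y''=-\kappa$ and $xx''+yy''=\kappa(x'y-xy')$. Hence $(x'y-xy')(x''y'-x'y'')=-(xx''+yy'')$, and since $(xx'+yy')'=x'^2+y'^2+xx''+yy''=1+(xx''+yy'')$, the numerator collapses to $-(xx'+yy')'$. Setting $\lambda:=xx'+yy'=\frac{1}{2}(x^2+y^2)'$, the prescribed-curvature equation reduces to the first-order ODE $\lambda'/\lambda^4=-(\phi^2/c^2)K(s)$.

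From here the integrations are routine. A first quadrature gives $\lambda^{-3}=k_1+\frac{3\phi^2}{c^2}\int_{s_0}^s K\,\rmd w$, hence $\lambda=(k_1+\frac{3\phi^2}{c^2}\int_{s_0}^s K\,\rmd w)^{-1/3}$; a second quadrature, using $\lambda=\frac{1}{2}(x^2+y^2)'$, produces precisely the stated formula for $x^2(s)+y^2(s)$. The two integration constants $k_0,k_1$ supply the claimed two-parameter family and are fixed by the initial values of $x,y,x',y'$ at $s_0$ (subject to $x'^2+y'^2=1$).

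The step I expect to require the most care is not this reduction but the reconstruction of an \emph{honest} generating curve from the scalar relation, since the formula pins down only $\rho=\sqrt{x^2+y^2}$ and not $x,y$ individually. To recover the curve I would pass to polar coordinates $x=\rho\cos\theta$, $y=\rho\sin\theta$; the arc-length constraint then reads $\rho'^2+\rho^2\theta'^2=1$, so $\theta'=\pm\sqrt{1-\rho'^2}/\rho$ and $\theta$ follows by a final quadrature, up to an orientation choice and an additive constant (a harmless rotation). For this to be legitimate one must verify the inequality $\rho'^2\le 1$, equivalently $(xx'+yy')^2\le(x^2+y^2)(x'^2+y'^2)$, which is exactly Cauchy--Schwarz; one must also keep $\lambda=xx'+yy'\neq 0$ (admissibility, i.e.\ $\alpha$ not a circle centered at the origin) so that the right-hand sides above stay finite. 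With $K$ continuous, these conditions hold on a suitable $s$-interval about $s_0$ and the reconstruction yields a genuine $C^2$ curve, completing the proof.
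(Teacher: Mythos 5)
Your proposal is correct and follows essentially the same route as the paper: parametrize by arc length, collapse the numerator of $K$ to $-(xx'+yy')'$ (the paper does this by direct expansion using $x'x''=-y'y''$, you via the signed-curvature decomposition $x''=-\kappa y'$, $y''=\kappa x'$ --- the same algebra), recognize the perfect derivative of $(xx'+yy')^{-3}$, and integrate twice. Your final paragraph on reconstructing an honest curve from the scalar relation $\rho=\sqrt{x^2+y^2}$ via polar coordinates, Cauchy--Schwarz, and the admissibility condition $xx'+yy'\neq 0$ is a worthwhile addition that the paper leaves implicit, since its proof only establishes the forward implication from a given generating curve to the stated integral formula.
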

\begin{proof}
If $\alpha(s)$ is parameterized by arc-length $s$, then
$$x'^2+y'^2=1\Rightarrow x'x''=-y'y''.$$ Thus, the Gaussian curvature is written as
\begin{eqnarray}
K & = & \frac{c^2}{\phi^2}\frac{x'x''yy'-x'^2yy''-xx''y'^2+xx'y'y''-1}{(xx'+yy')^4}\nonumber\\
%& = & \frac{c^2}{\phi^2}\frac{-yy''y'^2-x'^2yy''-xx''y'^2-xx''x'^2-1}{(xx'+yy')^4}\nonumber\\
& = &  -\frac{c^2}{\phi^2}\frac{1+xx''+yy''}{(xx'+yy')^4}=\frac{c^2}{3\phi^2}\frac{\rmd}{\rmd s}(xx'+yy')^{-3}.\nonumber
\end{eqnarray}
Integration gives 
$$\frac{1}{2}(x^2+y^2)'=(xx'+yy')=\left(k_1+\frac{3\phi^2}{c^2}\int K\right)^{-\frac{1}{3}}.$$
Finally, integrating this last equation gives the desired result.
\end{proof}

On the other hand, for helicoidal surfaces of i-type we can solve both prescribed Gaussian and mean curvatures problems.

\begin{thm}\label{thr::SimplyPrescKhelSurfiType}
Let $K(s)$ be a continuous function. Then, there exists a 2-parameter family of helicoidal surfaces of i-type with $K$ as the Gaussian curvature and whose generating curves $\alpha_{k_0,k_1}(s)=(s,0,z(s))$, $s$ arc-length parameter, satisfy
\begin{equation}
 z(s) = k_0 +\int_{s_0}^{s}\left(k_1-\frac{c^2}{\phi^2v^2}+2\int_{s_0}^vwK(w)\,\rmd w\right)^{\frac{1}{2}}\rmd v,
\end{equation}
where $k_i$ ($i=1,2$) is a constant depending on the values of $z,z'$ at $s=s_0$. 
\end{thm}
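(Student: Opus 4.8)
The plan is to use the arc-length parametrization to collapse the i-type Gaussian curvature formula of the preceding Proposition into a single ordinary differential equation for $z$, and then to integrate twice. First I would observe that, since the generating curve $\alpha(u)=(x(u),0,z(u))$ lies on the isotropic $xz$-plane, its isotropic arc-length element is $\Vert\alpha'\Vert_z\,\rmd u=\vert x'\vert\,\rmd u$, because the $z$-component is invisible to $\langle\cdot,\cdot\rangle_z$. Hence parametrizing by arc-length simply means $x'=1$, i.e. $x(s)=s$ and $x''=0$, which is exactly the normalization $\alpha_{k_0,k_1}(s)=(s,0,z(s))$ appearing in the statement. Note that $x'=1\ne0$ makes $\alpha$ admissible (it is not an isotropic line), so the earlier formulas apply.

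Substituting $x=s$, $x'=1$, $x''=0$ into $K=-z'(x''z'-x'z'')/(xx'^4)-c^2/(\phi^2x^4)$, every term containing $x''$ drops out and I would obtain
$$K=\frac{z'z''}{s}-\frac{c^2}{\phi^2 s^4}.$$
Recognizing $z'z''=\tfrac12(z'^2)'$, this is a first-order linear equation for the unknown $w:=z'^2$,
$$(z'^2)'=2s\,K(s)+\frac{2c^2}{\phi^2 s^3}.$$
Since $K$ is continuous the right-hand side is integrable; antidifferentiating the term $2c^2/(\phi^2 s^3)$ gives $-c^2/(\phi^2 s^2)$, so introducing the integration constant $k_1$ yields
$$z'(s)^2=k_1-\frac{c^2}{\phi^2 s^2}+2\int_{s_0}^s w\,K(w)\,\rmd w.$$
Taking the positive square root and integrating once more, with constant $k_0$, produces precisely the claimed expression for $z(s)$. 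The two constants $k_0,k_1$ are fixed by the values of $z,z'$ at $s=s_0$, giving the announced 2-parameter family, and by construction the surface $Z_2$ built on this $\alpha$ has Gaussian curvature equal to the prescribed $K$.

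I do not anticipate a serious obstacle: once the arc-length reduction $x=s$ is established, the remainder is a textbook double integration. The only points deserving a word of care are (i) justifying that the isotropic arc-length ignores the $z$-coordinate, so that the normalization $x=s$ is legitimate, and (ii) observing that the formula holds on any interval where the quantity under the square root remains nonnegative, so that $z'$ is real. Continuity of $K$ ensures the inner integral is $C^1$, whence $z$ is $C^2$ and the resulting surface is genuinely $C^2$, as required.
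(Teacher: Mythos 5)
Your proposal is correct and follows essentially the same route as the paper's own proof: normalize the arc-length parametrization to $x(s)=s$, substitute into the i-type Gaussian curvature formula to get $K=\frac{z'z''}{s}-\frac{c^2}{\phi^2s^4}$, recognize $z'z''=\frac{1}{2}(z'^2)'$, and integrate twice. Your added remarks (why the isotropic arc-length forces $x=s$, admissibility, and the nonnegativity of the radicand) are sound clarifications of points the paper leaves implicit.
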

\begin{thm}\label{thr::SimplyPrescHhelSurfiType}
Let $H(s)$ be a continuous function. Then, there exists a 2-parameter family of helicoidal surfaces of i-type with $H$ as the mean curvature and whose generating curves $\alpha_{h_0,h_1}(s)=(s,0,z(s))$, $s>0$ arc-length parameter, satisfy
\begin{equation}
 z(s) = h_0 +h_1\ln\, s+\int_{s_0}^{s}\left(\frac{2}{v}\int_{s_0}^vwH(w)\,\rmd w\right)\rmd v,
\end{equation}
where $h_i$ ($i=1,2$) is a constant depending on the values of $z,z'$ at $s=s_0$. 
\end{thm}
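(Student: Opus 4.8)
The plan is to use the arc-length hypothesis to collapse the mean curvature formula for helicoidal surfaces of i-type into a second-order linear ODE for $z$ that has an exact-derivative structure, and then integrate twice. First I would record the consequence of parameterizing by arc-length: since for $\alpha'(s)=(x'(s),0,z'(s))$ only the top view contributes to the isotropic norm, one has $\Vert\alpha'(s)\Vert_z=\vert x'(s)\vert$, so arc-length forces $x'(s)^2=1$; choosing $x'>0$ gives exactly $x(s)=s$, as already built into the statement $\alpha(s)=(s,0,z(s))$. Hence $x'\equiv1$ and $x''\equiv0$.

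Substituting $x=s$, $x'=1$, $x''=0$ into the mean curvature formula for admissible helicoidal surfaces of i-type derived above collapses it to
$$H=\frac{x'\,^2z'-x(x''z'-x'z'')}{2xx'\,^3}=\frac{z'+sz''}{2s}.$$
The decisive observation is that the numerator is a total derivative, $z'+sz''=(sz')'$, so the prescribed mean curvature condition is the exact first-order equation $(sz')'=2sH(s)$. From here the computation is routine: integrating once from $s_0$ to $s$ gives $sz'(s)=h_1+2\int_{s_0}^s wH(w)\,\rmd w$ with $h_1=s_0z'(s_0)$; dividing by $s$ and integrating a second time yields the $h_1\ln s$ term together with the stated double integral, while the final additive constant is fixed by $z(s_0)$ and absorbed into $h_0$. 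Thus both constants $h_0,h_1$ are determined by the initial data $z(s_0),z'(s_0)$, as claimed.

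Because the argument is essentially one exact integration, there is no genuine analytic obstacle; the continuity of $H$ suffices for the nested integrals to be well defined, and the only point requiring care is the restriction $s>0$. This is needed both for admissibility—recall the surface degenerates along $x=0$, and here $x=s$—and to make the factor $1/s$, hence the $\ln s$ term, meaningful. To close the argument I would verify the converse direction, namely that the curve so constructed reproduces the prescribed $H$: differentiating the closed-form $z$ back through $(sz')'=2sH$ recovers the mean curvature identically, so the two-parameter family exhausts exactly the i-type helicoidal surfaces with the given $H$.
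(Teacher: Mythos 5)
Your proposal is correct and follows essentially the same route as the paper's proof: substitute the arc-length parameterization $\alpha(s)=(s,0,z(s))$ into the i-type mean curvature formula, recognize the exact-derivative structure $H=\frac{1}{2s}\frac{\rmd}{\rmd s}(sz')$, and integrate twice to produce the $h_1\ln s$ term and the nested integral. Your added remarks on the $s>0$ restriction and the converse verification are sensible elaborations but do not change the argument.
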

\begin{proof}[Proof of Theorem \ref{thr::SimplyPrescKhelSurfiType}]
Let $\alpha(s)=(s,0,z(s))$ be the generating curve of an i-type helicoidal surface 
(an arc-length parameterization necessarily implies that $\alpha$ must be a graph curve in the $xz$-plane).
 The Gaussian curvature is then
$$K = \frac{z'z''}{s}-\frac{c^2}{\phi^2s^4}=\frac{1}{s}\frac{\rmd }{\rmd s}\frac{z'\,^2}{2}-\frac{c^2}{\phi^2s^4}\Rightarrow z'\,^2=k_1-\frac{c^2}{\phi^2s^2}+\int_{s_0}^s2wK(w)\,\rmd w.$$
Finally, a new integration gives
the desired result.
\end{proof}

\begin{proof}[Proof of Theorem \ref{thr::SimplyPrescHhelSurfiType}]
Let $\alpha(s)=(s,0,z(s))$ be the generating curve of an i-type helicoidal surface.
 The mean curvature is then
$$H= \frac{z'}{2s}+\frac{z''}{2}= \frac{1}{2s}\frac{\rmd}{\rmd s}(sz'),$$ 
which leads to
$$sz'=h_1+2\int vH(v)\,\rmd v\Rightarrow z'=\frac{h_1}{s}+\frac{2}{s}\int_{s_0}^s wH(w)\,\rmd w .$$
A new integration then gives the desired result.
\end{proof}

\begin{example}[Flat helicoidal surfaces]
Setting $K(s)\equiv0$ in Theorem \ref{thr::SimplyPrescKhelSurfiType}, gives
$$z(s)=z_0+s\sqrt{k_1-\frac{c^2}{\phi^2s^2}}+\frac{c}{\phi}\tan^{-1}\left(\frac{c}{\phi s\sqrt{k_1-\frac{c^2}{\phi^2s^2}}}\right),$$
which allows us to recover Corollary 1 of  \cite{KaracanTJM2017} (see also Sect. 4 of \cite{AydinAKUJSE2016}). 
\end{example}
\begin{example}[Revolution surfaces with constant Gaussian curvature]\label{exe::EuclRevSurfKcte}
Setting $K(s)\equiv K_0\not=0$ and $c=0$ in Theorem \ref{thr::SimplyPrescKhelSurfiType}, gives
$$z(s)=z_1+\frac{s}{2}\sqrt{z_0+K_0s^2}+\frac{z_0}{2\sqrt{\vert K_0\vert}}F(s\,;z_0,K_0),
$$
where $z_0$ and $z_1$ are constants and we have defined
$$
F(s\,;z_0,K_0)=\left\{
\begin{array}{ccc}
\ln\left(\sqrt{K_0}\,s+\sqrt{z_0+K_0s^2}\right)&\mbox{ if }&K_0>0\\[4pt]
\sin^{-1}\left(\sqrt{\frac{-K_0}{z_0}}\,s\right)&\mbox{ if }&z_0,K_0<0\\[4pt]
\end{array}
\right..
$$
In the limit $K_0\to0^-$ ($z_0>0$), $\frac{1}{\sqrt{-K_0}}F\to\frac{s}{\sqrt{z_0}}$ and, therefore, flat revolution surfaces of i-type are obtained by Euclidean rotations of a non-isotropic line, i.e., they are necessarily part of a cone.
\end{example}
\begin{example}[CMC helicoidal surfaces]
Setting $H(s)\equiv H_0$ in Theorem \ref{thr::SimplyPrescHhelSurfiType}, gives
$$
z(s) = z_0 + z_1 \ln s + \frac{1}{2}H_0\,s^2,
$$
 where $z_0$ and $z_1$ are constants. This allows us to recover Corollary 2 of \cite{KaracanTJM2017}. In addition, setting $H_0=0$, minimal helicoidal surfaces are ``logarithmoids'' and we recover Theorem 12.1 of \cite{Sachs1990}, p. 231.
\end{example}

\subsection{Parabolic revolution and warped translation surfaces}
\label{subsec::DGSimplyInvParaSurf}

The first fundamental form of a parabolic revolution surface of ni-type is given by Eq. (\ref{eq::SimplyIso1stFFParaRevSurfniType}):
$$\mathrm{I}=(x'\,^2+y'\,^2)\mathrm{d}u^2+2(ax'+by')\mathrm{d}u\rmd t+(a^2+b^2)\rmd t^2.$$

For the second fundamental form, we first compute
\begin{eqnarray}
\partial_uY_3\times\partial_tY_3 &=& (cy'+c_1t(ay'-bx')+c_1xy'+c_2yy')\mathbf{e}_1+\nonumber\\
+(-cx'&+&c_2t(ay'-bx')-c_1xx'-c_2x'y)\mathbf{e}_2+(bx'-ay')\mathbf{e}_3.\nonumber
\end{eqnarray}
Thus, the relative normal is then given by
$$
N_h =(\frac{c_1t[ay'-bx']+[c+c_1x+c_2y]y'}{bx'-ay'},\frac{c_2t[ay'-bx']-[c+c_1x+c_2y]x'}{bx'-ay'},1).
$$

Taking the Euclidean inner product of $N_h$ with the second derivatives of $Y_3$,
$$\partial_u^2Y_3 = (x'',y'',c_1tx''+c_2ty''),
\,\partial_{tu}^2Y_3=(c_1x'+c_2y')\mathbf{e}_3,
$$
and
$$
\partial_t^2Y_3 = (ac_1+bc_2)\mathbf{e}_3,
$$
gives the second fundamental form
\begin{equation}
\mathrm{II}=(c+c_1x+c_2y)\frac{(x''y'-x'y'')}{bx'-ay'}\rmd u^2+2(c_1x'+c_2y')\rmd u\rmd t+(ac_1+bc_2)\rmd t^2.
\end{equation}

\begin{proposition}
The Gaussian and mean curvatures of an admissible parabolic revolution surface of ni-type are respectively
\begin{equation}
K=\frac{(ac_1+bc_2)(c+c_1x+c_2y)(x''y'-x'y'')}{(bx'-ay')^3}-\left(\frac{c_1x'+c_2y'}{bx'-ay'}\right)^2
\end{equation}
and
\begin{equation}
H = \frac{(a^2+b^2)(c+c_1x+c_2y)(x''y'-x'y'')}{2(bx'-ay')^3}-
\end{equation}
$$-\frac{(ax'+by')(c_1x'+c_2y')}{(bx'-ay')^2}+\frac{(x'\,^2+y'\,^2)(ac_1+bc_2)}{2(bx'-ay')^2}.$$
\end{proposition}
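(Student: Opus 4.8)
The plan is to substitute the coefficients of the two fundamental forms directly into the general curvature formulas recorded just after Eq. (\ref{eq::CoefShapeOpe}). First I would read off the metric coefficients from the first fundamental form in Eq. (\ref{eq::SimplyIso1stFFParaRevSurfniType}), namely $g_{11} = x'^2 + y'^2$, $g_{12} = ax' + by'$, and $g_{22} = a^2 + b^2$, together with the coefficients of the second fundamental form computed immediately above: $h_{11} = (c+c_1x+c_2y)(x''y'-x'y'')/(bx'-ay')$, $h_{12} = c_1x'+c_2y'$, and $h_{22} = ac_1+bc_2$.

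The key preliminary step is the determinant of the metric. Expanding $g_{11}g_{22} - g_{12}^2 = (x'^2+y'^2)(a^2+b^2) - (ax'+by')^2$, the mixed terms cancel to leave the perfect square $(bx'-ay')^2$, which is precisely the determinant already recorded in the proof of Proposition \ref{prop::SimplyIsoAdmissSurfForGrenzebewegung}. Admissibility guarantees $bx'-ay' \neq 0$, so this denominator does not vanish and both curvatures are well defined.

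For the Gaussian curvature I would substitute into $K = (h_{11}h_{22}-h_{12}^2)/(g_{11}g_{22}-g_{12}^2)$. Since $h_{11}$ already carries one factor of $(bx'-ay')$ in its denominator, the product $h_{11}h_{22}$ combines with the metric determinant $(bx'-ay')^2$ to yield the cube $(bx'-ay')^3$ in the first term of the stated formula, while the remaining term comes from $-h_{12}^2/(bx'-ay')^2$. For the mean curvature I would expand $H = \tfrac{1}{2}(g_{11}h_{22}-2g_{12}h_{12}+g_{22}h_{11})/(g_{11}g_{22}-g_{12}^2)$ and distribute the three numerator contributions over the common denominator; the term carrying $g_{22}h_{11}$ again raises the power of $(bx'-ay')$ to three, matching the first summand, while the other two numerator terms reproduce the remaining summands.

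The only point requiring care is the bookkeeping in the mean curvature, where three terms with different effective denominators must be reconciled with the target expression. No genuine difficulty arises, however: the entire argument reduces to a mechanical substitution into the previously established identities and the single algebraic simplification of the metric determinant.
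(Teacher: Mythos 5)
Your proposal is correct and follows essentially the same route as the paper: the paper computes the first and second fundamental forms immediately before the proposition and then obtains $K$ and $H$ by direct substitution into the general formulas following Eq. (\ref{eq::CoefShapeOpe}), exactly as you do, with the identity $g_{11}g_{22}-g_{12}^2=(bx'-ay')^2$ (already recorded in the proof of Proposition \ref{prop::SimplyIsoAdmissSurfForGrenzebewegung}) supplying the common denominator. Your bookkeeping of the powers of $(bx'-ay')$ in both curvatures checks out term by term.
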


On the other hand, the first fundamental form of a parabolic revolution surface of i-type is given by Eq. (\ref{eq::SimplyIso1stFFParaRevSurfiType}):
$$\mathrm{I}=x'\,^2\mathrm{d}u^2+2ax'\mathrm{d}u\rmd t+(a^2+b^2)\rmd t^2.$$

For the second fundamental form, we first compute
$$\partial_uZ_3\times\partial_tZ_3 = (-bc_1tx'-bz',az'-cx'-bc_2tx'-c_1xx',bx').
$$
Thus, the relative normal is then given by
\begin{equation}
N_h = \left(\frac{-bc_1tx'-bz'}{bx'},\frac{az'-cx'-bc_2tx'-c_1xx'}{bx'},1\right).
\end{equation}

Taking the Euclidean inner product of $N_h$ with the second derivatives of $Z_3$,
$$\partial_u^2Z_3 = (x'',0,c_1tx''+z''),\,
\partial_{tu}^2Z_3=(c_1x')\mathbf{e}_3,\mbox{ and }\,\partial_t^2Z_3 = (ac_1+bc_2)\mathbf{e}_3,
$$
gives the second fundamental form
\begin{equation}
\mathrm{II}=-\frac{x''z'-x'z''}{x'}\rmd u^2+2c_1x'\rmd u\rmd t+(ac_1+bc_2)\rmd t^2.
\end{equation}

\begin{proposition}
The Gaussian and mean curvatures of an admissible parabolic revolution surface of i-type are respectively
\begin{equation}
K=-\frac{[ac_1+bc_2][x''z'-x'z'']}{b^2x'\,^3}-\frac{c_1^2}{b^2},
H = \frac{bc_2-ac_1}{2b^2}-\frac{[a^2+b^2]}{2b^2}\frac{[x''z'-x'z'']}{x'\,^3}.
\end{equation}
\end{proposition}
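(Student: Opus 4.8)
The plan is to read off the coefficients of the first and second fundamental forms from the two displays immediately preceding the statement and to substitute them into the general expressions $K=(h_{11}h_{22}-h_{12}^2)/(g_{11}g_{22}-g_{12}^2)$ and $H=\frac{1}{2}(g_{11}h_{22}-2g_{12}h_{12}+g_{22}h_{11})/(g_{11}g_{22}-g_{12}^2)$ recorded at the end of Subsection~\ref{subsec::IsotropicSurf}. No new geometric input is required: everything follows from the fundamental forms already obtained via $N_h$, so the argument is a direct algebraic substitution.

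From $\mathrm{I}$ in Eq.~\eqref{eq::SimplyIso1stFFParaRevSurfiType} I would set $g_{11}=x'^2$, $g_{12}=ax'$, and $g_{22}=a^2+b^2$, so that the common denominator is $g_{11}g_{22}-g_{12}^2=x'^2(a^2+b^2)-a^2x'^2=b^2x'^2$. This is precisely the determinant whose non-vanishing characterizes admissibility in Proposition~\ref{prop::SimplyIsoAdmissSurfForGrenzebewegung}, so the quotient formulas are legitimate exactly when the surface is admissible. From the displayed second fundamental form I would read $h_{11}=-(x''z'-x'z'')/x'$, $h_{12}=c_1x'$, and $h_{22}=ac_1+bc_2$.

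For the Gaussian curvature the numerator is $h_{11}h_{22}-h_{12}^2=-(ac_1+bc_2)(x''z'-x'z'')/x'-c_1^2x'^2$, and dividing by $b^2x'^2$ yields the claimed expression at once. For the mean curvature the only mildly delicate point is the numerator $g_{11}h_{22}-2g_{12}h_{12}+g_{22}h_{11}$: the first two terms combine as $x'^2(ac_1+bc_2)-2ac_1x'^2=x'^2(bc_2-ac_1)$, while the last term contributes $-(a^2+b^2)(x''z'-x'z'')/x'$; dividing by $2b^2x'^2$ then gives the stated $H$. I expect no genuine obstacle, since this is pure substitution; the one step worth watching is the cancellation $ac_1x'^2-2ac_1x'^2=-ac_1x'^2$ in the mean-curvature numerator, which is what converts the coefficient $ac_1+bc_2$ coming from $h_{22}$ into the $bc_2-ac_1$ appearing in the final formula.
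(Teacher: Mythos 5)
Your proposal is correct and is exactly the argument the paper intends: the paper derives $\mathrm{I}$ and $\mathrm{II}$ for $Z_3$ via $N_h$ in the displays preceding the statement and leaves the proposition as the direct substitution into $K=(h_{11}h_{22}-h_{12}^2)/(g_{11}g_{22}-g_{12}^2)$ and $H=\tfrac12(g_{11}h_{22}-2g_{12}h_{12}+g_{22}h_{11})/(g_{11}g_{22}-g_{12}^2)$, which you carry out correctly, including the cancellation $ac_1+bc_2-2ac_1=bc_2-ac_1$ and the identification of the denominator $b^2x'^2$ with the admissibility determinant of Proposition \ref{prop::SimplyIsoAdmissSurfForGrenzebewegung}.
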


Warped translation surfaces are obtained from the parabolic revolution ones by setting $c=0$ and $ac_1+bc_2=0$, but  $(c_1,c_2)\not=(0,0)$. Then,

\begin{corollary}
The Gaussian and mean curvatures of an admissible simply isotropic warped translation surface of  ni-type are respectively
\begin{equation}
K=-\left(\frac{c_1x'+c_2y'}{bx'-ay'}\right)^2
\end{equation}
and
\begin{equation}
H = \frac{(a^2+b^2)(c_1x+c_2y)(x''y'-x'y'')}{2(bx'-ay')^3}-\frac{(ax'+by')(c_1x'+c_2y')}{(bx'-ay')^2};
\end{equation}
for the i-type they are respectively
\begin{equation}
K=-\left(\frac{c_1}{b}\right)^2
\mbox{ and }
H = -\frac{(a^2+b^2)}{2b^2}\frac{(x''z'-x'z'')}{x'\,^3}+\frac{bc_2-ac_1}{2b^2}.
\end{equation}
\end{corollary}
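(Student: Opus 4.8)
The plan is to obtain every formula in the statement as a direct specialization of the curvature expressions already derived for parabolic revolution surfaces, since a warped translation surface is by construction a parabolic revolution surface subject to the constraints $c=0$ and $ac_1+bc_2=0$ with $(c_1,c_2)\neq(0,0)$ and $(a,b)\neq(0,0)$. Concretely, the parameterizations $Y_4$ and $Z_4$ are exactly $Y_3$ and $Z_3$ evaluated under these constraints, so their first and second fundamental forms --- and therefore their Gaussian and mean curvatures --- are obtained by imposing the same two conditions in the two Propositions recording the curvatures of parabolic revolution surfaces of ni- and i-type.

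First I would treat the ni-type. In the expression $K=\frac{(ac_1+bc_2)(c+c_1x+c_2y)(x''y'-x'y'')}{(bx'-ay')^3}-\left(\frac{c_1x'+c_2y'}{bx'-ay'}\right)^2$ the entire first summand carries the factor $ac_1+bc_2$ and hence vanishes, leaving $K=-\left(\frac{c_1x'+c_2y'}{bx'-ay'}\right)^2$. In the mean curvature, the term proportional to $(ac_1+bc_2)$ drops out, while inside the surviving curvature term $c+c_1x+c_2y$ reduces to $c_1x+c_2y$ because $c=0$; this yields precisely the claimed expression for $H$.

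Next I would repeat the substitution for the i-type. Here the Gaussian curvature $K=-\frac{(ac_1+bc_2)(x''z'-x'z'')}{b^2x'^3}-\frac{c_1^2}{b^2}$ loses its first term under $ac_1+bc_2=0$, collapsing to $K=-(c_1/b)^2$. The mean curvature formula for the parabolic revolution surface of i-type already contains no explicit dependence on $c$ nor on the combination $ac_1+bc_2$, so it passes unchanged to the warped translation case and reproduces the stated $H$. Finally, admissibility of all these surfaces under the stated hypotheses on $\alpha$ is guaranteed by Proposition \ref{prop::SimplyIsoAdmissSurfForGrenzebewegung}, so the curvature formulas are legitimately defined throughout.

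Since the argument is pure specialization, there is no genuine analytic obstacle; the only point requiring care is the bookkeeping --- verifying that each occurrence of the parameters $c$, $c_1$, $c_2$, $a$, $b$ is tracked correctly so that exactly the terms multiplied by $c$ or by $ac_1+bc_2$ are the ones annihilated, and that the surviving linear combination $bc_2-ac_1$ (which is \emph{not} killed by $ac_1+bc_2=0$) is retained intact in the i-type mean curvature.
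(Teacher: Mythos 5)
Your proposal is correct and takes essentially the same route as the paper: the paper likewise obtains the corollary by specializing the parabolic revolution curvature formulas under $c=0$ and $ac_1+bc_2=0$, noting that the terms carrying the factor $ac_1+bc_2$ vanish while $c+c_1x+c_2y$ reduces to $c_1x+c_2y$ and the combination $bc_2-ac_1$ survives. Your added remarks on admissibility and on the bookkeeping of which parameter combinations are annihilated are consistent with the paper's hypotheses and introduce no discrepancy.
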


\subsubsection{Simply isotropic parabolic revolution surfaces with prescribed curvature}

\begin{thm}\label{thr::SimplyPrescKParaSurfiType}
Let $K(s)$ be  continuous. There exists a 2-parameter family of parabolic revolution surfaces of i-type with $K$ as the Gaussian curvature and whose generating curves $\alpha_{k_0,k_1}(s)=(s,0,z(s))$, $s$ arc-length parameter, satisfy
\begin{equation}
 z(s) = k_0 +k_1s+\frac{c_1^2\,s^2}{2(ac_1+bc_2)}+\frac{b^2}{ac_1+bc_2}\int_{s_0}^{s}\int_{s_0}^vK(w)\,\rmd w\,\rmd v,
\end{equation}
where $k_i$ ($i=1,2$) is a constant depending on the values of $z,z'$ at $s=s_0$. 
\end{thm}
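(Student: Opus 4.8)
The plan is to follow the same reduce-then-integrate strategy used for the i-type helicoidal surfaces in Theorems \ref{thr::SimplyPrescKhelSurfiType} and \ref{thr::SimplyPrescHhelSurfiType}, the difference being that here the resulting ordinary differential equation is of an even simpler, linear constant-coefficient type, so almost all the work is routine double integration.

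First I would pin down what the arc-length hypothesis means in this setting. For a generating curve $\alpha(u)=(x(u),0,z(u))$ lying in the isotropic $xz$-plane, the induced simply isotropic norm is $\Vert\alpha'\Vert_z=\vert x'\vert$; hence requiring $s$ to be an arc-length parameter forces $x'=1$ and $x''=0$, i.e.\ $x(s)=s$ after a harmless translation. This is exactly why the statement fixes $\alpha_{k_0,k_1}(s)=(s,0,z(s))$.

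Next I would substitute $x=s$, $x'=1$, $x''=0$ into the Gaussian curvature of a parabolic revolution surface of i-type obtained in the Proposition preceding this theorem,
$$K=-\frac{(ac_1+bc_2)(x''z'-x'z'')}{b^2x'^3}-\frac{c_1^2}{b^2}.$$
All the derivative cross-terms collapse and one is left with the linear equation $(ac_1+bc_2)\,z''=b^2K+c_1^2$. This reduction makes sense precisely because the parabolic (type III) hypothesis guarantees $ac_1+bc_2\neq0$, while admissibility of the i-type surface (Proposition \ref{prop::SimplyIsoAdmissSurfForGrenzebewegung}) guarantees $b\neq0$, so division by both quantities is legitimate.

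Finally I would integrate $z''=(b^2K+c_1^2)/(ac_1+bc_2)$ twice. A first integration gives
$$z'(s)=k_1+\frac{c_1^2}{ac_1+bc_2}(s-s_0)+\frac{b^2}{ac_1+bc_2}\int_{s_0}^sK(w)\,\rmd w,\quad k_1=z'(s_0),$$
and a second integration gives
$$z(s)=k_0+k_1(s-s_0)+\frac{c_1^2}{2(ac_1+bc_2)}(s-s_0)^2+\frac{b^2}{ac_1+bc_2}\int_{s_0}^s\int_{s_0}^vK(w)\,\rmd w\,\rmd v,\quad k_0=z(s_0).$$
Expanding the polynomial in $s-s_0$ and absorbing the $s_0$-dependent contributions into redefined constants $k_0,k_1$ yields the stated formula, and the two free constants encode the initial data $z(s_0),z'(s_0)$, producing the asserted 2-parameter family. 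I expect no genuine obstacle here: the only point requiring a moment's care is the translation of the arc-length condition into $x(s)=s$, together with the bookkeeping of which integration constants get reshuffled when the answer is rewritten with $s$ in place of $s-s_0$.
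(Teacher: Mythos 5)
Your proposal is correct and follows essentially the same route as the paper: substitute the arc-length normalization $x(s)=s$ into the Gaussian curvature formula for i-type parabolic revolution surfaces, obtain the linear equation $z''=\frac{c_1^2}{ac_1+bc_2}+\frac{b^2}{ac_1+bc_2}K$, and integrate twice. Your added remarks on why $ac_1+bc_2\neq 0$ and $b\neq 0$ justify the divisions, and on the reshuffling of integration constants, are correct details that the paper leaves implicit.
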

\begin{thm}\label{thr::SimplyPrescHParaSurfiType}
Let $H(s)$ be a continuous function. Then, there exists a 2-parameter family of parabolic revolution surfaces of i-type with $H$ as the mean curvature and whose generating curves $\alpha_{h_0,h_1}(s)=(s,0,z(s))$, $s>0$ arc-length parameter, satisfy
\begin{equation}
 z(s) = h_0 +h_1s+\frac{ac_1-bc_2}{2(a^2+b^2)}\,s^2+\frac{2b^2}{a^2+b^2}\int_{s_0}^{s}\int_{s_0}^vH(w)\,\rmd w\,\rmd v,
\end{equation}
where $h_i$ ($i=1,2$) is a constant depending on the values of $z,z'$ at $s=s_0$. 
\end{thm}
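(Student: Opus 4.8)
The plan is to specialize the mean-curvature formula for parabolic revolution surfaces of i-type, derived in the proposition above, to an arc-length-parameterized generating curve, and then recover $z$ by integrating the resulting second-order ODE twice. This is exactly parallel to the proof of Theorem \ref{thr::SimplyPrescKParaSurfiType}, with $H$ replacing $K$.

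First I would fix the normalization. For a generating curve $\alpha(s)=(x(s),0,z(s))$ the isotropic arc-length condition reads $\langle\alpha',\alpha'\rangle_z=(x')^2=1$, since the $z$-coordinate is invisible to $\langle\cdot,\cdot\rangle_z$; hence, up to sign, $x(s)=s$, so that $x'=1$ and $x''=0$. This is the same graph-curve normalization already invoked in the proof of Theorem \ref{thr::SimplyPrescKhelSurfiType}. Admissibility forces $b\neq0$ (Proposition \ref{prop::SimplyIsoAdmissSurfForGrenzebewegung}), which is precisely what keeps the denominators below nonzero.

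Next I would substitute $x=s$, $x'=1$, $x''=0$ into the expression for the mean curvature of an admissible parabolic revolution surface of i-type, namely $H=\frac{bc_2-ac_1}{2b^2}-\frac{a^2+b^2}{2b^2}\,\frac{x''z'-x'z''}{x'^{\,3}}$. Since $x''z'-x'z''=-z''$ and $x'^{\,3}=1$, this collapses to the affine relation
$$H(s)=\frac{bc_2-ac_1}{2b^2}+\frac{a^2+b^2}{2b^2}\,z''(s),$$
which I would invert to isolate the second derivative,
$$z''(s)=\frac{2b^2}{a^2+b^2}\,H(s)+\frac{ac_1-bc_2}{a^2+b^2}.$$

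Finally I would integrate twice from $s_0$ to $s$. The constant forcing $\tfrac{ac_1-bc_2}{a^2+b^2}$ yields the displayed quadratic $\tfrac{ac_1-bc_2}{2(a^2+b^2)}s^2$, the term in $H$ produces the iterated integral $\tfrac{2b^2}{a^2+b^2}\int_{s_0}^{s}\int_{s_0}^{v}H(w)\,\rmd w\,\rmd v$, and the two constants of integration encode $z(s_0)$ and $z'(s_0)$, contributing the affine part $h_0+h_1 s$ after the $s_0$-dependent pieces of the quadratic are absorbed into $h_0$ and $h_1$. Continuity of $H$ makes the inner integral $C^1$, so $z\in C^2$ and $\alpha$ is a bona fide generating curve. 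There is no genuine obstacle here: the only care needed is the bookkeeping of which terms fold into $h_0$ and $h_1$, and the observation that admissibility ($b\neq0$) is exactly what makes the inversion for $z''$ well posed.
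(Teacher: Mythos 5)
Your proposal is correct and follows essentially the same route as the paper: specialize the i-type mean curvature formula to the arc-length normalization $x(s)=s$ (so $x''z'-x'z''=-z''$), invert the resulting affine relation to get $z''=\frac{ac_1-bc_2}{a^2+b^2}+\frac{2b^2}{a^2+b^2}H$, and integrate twice, absorbing the $s_0$-dependent terms into $h_0$ and $h_1$. Your added remarks on admissibility ($b\neq0$) and regularity are sound but only make explicit what the paper leaves implicit.
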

\begin{proof}[Proof of Theorem \ref{thr::SimplyPrescKParaSurfiType}]
Let $\alpha(s)=(s,0,z(s))$ be the generating curve of an i-type parabolic revolution surface 
(an arc-length parameterization necessarily implies that $\alpha$ must be a graph curve in the $xz$-plane).
 The Gaussian curvature is then
$$K = \frac{ac_1+bc_2}{b^2}z''-\frac{c_1^2}{b^2}\Rightarrow z''=\frac{c_1^2}{ac_1+bc_2}+\frac{b^2}{ac_1+bc_2}K.$$ 
Finally, integrating twice gives
the desired result.
\end{proof}

\begin{proof}[Proof of Theorem \ref{thr::SimplyPrescHParaSurfiType}]
Let $\alpha(s)=(s,0,z(s))$ be the generating curve of an i-type parabolic revolution surface.
 The mean curvature is then
$$H = \frac{a^2+b^2}{2b^2}z''+\frac{bc_2-ac_1}{2b^2}\Rightarrow z''=\frac{ac_1-bc_2}{a^2+b^2}+\frac{2b^2}{a^2+b^2}H.$$ 
Finally, integrating twice gives
the desired result.
\end{proof}

\begin{example}[Constant curvature parabolic revolution surfaces]
Setting $K(s)=K_0$ in Theorem \ref{thr::SimplyPrescKParaSurfiType} gives, for arbitrary constants $z_0$ and $z_1$,
$$z(s)=z_0+z_1s+\frac{c_1^2+b^2K_0}{2(ac_1+bc_2)}\,s^2.$$
On the other hand, setting $H(s)=H_0$ in Theorem \ref{thr::SimplyPrescHParaSurfiType} gives
$$z(s)=z_0+z_1s+\frac{ac_1-bc_2+2b^2H_0}{2(a^2+b^2)}\,s^2.$$
\end{example}

\section{Differential geometry of pseudo-isotropic invariant surfaces}
\label{sec::DGPseudoInvSurf}

Here we compute the first and second fundamental forms of the pseudo-isotropic invariant surfaces described in Subsect. \ref{subsec::PseudoInvSurf} and derive the Gaussian and mean curvatures of  helicoidal and parabolic revolution surfaces in Subsects. \ref{subsec::DGPseudoInvHelSurf} and  \ref{subsec::DGPseudoInvParaSurf}, respectively. In addition,  we solve both the prescribed Gaussian and mean curvatures problems for helicoidal (Theorems \ref{thr::PseudoPrescKhelSurfiType} and \ref{thr::PseudoPrescHhelSurfiType}) and parabolic revolution (Theorems \ref{thr::PseudoPrescKParaSurfiType} and \ref{thr::PseudoPrescHParaSurfiType}) surfaces of i-type. Finally, for surfaces of ni-type we only solve the problem of prescribed Gaussian curvature for helicoidal surfaces (Theorem \ref{thr::PseudoPrescKhelSurfNiType}).

\subsection{Helicoidal and Lorentzian revolution surfaces}
\label{subsec::DGPseudoInvHelSurf}

The first fundamental form of a hyperbolic helicoidal surface of ni-type is given by Eq. (\ref{eq::PseudoIso1stFFhelicoidalSurfniType}):
$$\mathrm{I}=(x'\,^2-y'\,^2)\mathrm{d}u^2+2\phi(x'y-xy')\mathrm{d}u\rmd t-\phi^2(x^2-y^2)\rmd t^2.$$

%For the second fundamental form, we first compute
%\begin{eqnarray*}
%\partial_uYh_2\times_1\partial_tYh_2 &=&\nonumber\\
%=(cx'\sinh(t\phi)&+&cy'\cosh(t\phi),cx'\cosh(t\phi)+cy'\sinh(t\phi),\phi(xx'-yy')).
%\end{eqnarray*}
The pseudo-isotropic relative normal is given by
\begin{equation}
N_h = \frac{c}{\phi}\left(\frac{x'\sinh(t\phi)+y'\cosh(t\phi)}{xx'-yy'},\frac{x'\cosh(t\phi)+y'\sinh(t\phi)}{xx'-yy'},\frac{\phi}{c}\right).
\end{equation}
%Taking the second derivatives of $Yh_2$,
%$$\partial_u^2Yh_2 = (x''\cosh(t\phi)+y''\sinh(t\phi),x''\sinh(t\phi)+y''\cosh(t\phi),0),$$
%$$\partial_{ut}^2Yh_2 = (\phi x'\sinh(t\phi)+\phi y'\cosh(t\phi),\phi x'\cosh(t\phi)+\phi y'\sinh(t\phi),0),$$
%and
%$$\partial_t^2Yh_2 = (\phi^2 x\cosh(t\phi)+\phi^2 y\sinh(t\phi),\phi^2x\sinh(t\phi)+\phi^2y\cosh(t\phi),0),$$
The second fundamental form is
\begin{equation}
\mathrm{II}=\frac{c(x''y'-x'y'')\rmd u^2-2c\phi(x'^2-y'^2)\rmd u\rmd t+c\phi^2(xy'-x'y)\rmd t^2}{\phi(xx'-yy')}.
\end{equation}

\begin{proposition}
The Gaussian and mean curvatures of an admissible and pseudo-isotropic helicoidal surface of ni-type are respectively
\begin{equation}
K = \frac{c^2}{\phi^2}\frac{(x'\,^2-y'\,^2)^2-(xy'-x'y)(x''y'-x'y'')}{(xx'-yy')^4}.
\end{equation}
and
\begin{equation}
H = \frac{c}{\phi}\frac{(x^2-y^2)(x''y'-x'y'')+(x'\,^2-y'\,^2)(xy'-x'y)}{2(xx'-yy')^3}.
\end{equation}
\end{proposition}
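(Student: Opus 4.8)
The plan is to feed the coefficients of the two fundamental forms into the general expressions $K=(h_{11}h_{22}-h_{12}^2)/(g_{11}g_{22}-g_{12}^2)$ and $H=\frac{1}{2}(g_{11}h_{22}-2g_{12}h_{12}+g_{22}h_{11})/(g_{11}g_{22}-g_{12}^2)$ and then simplify. First I would read off from Eq. (\ref{eq::PseudoIso1stFFhelicoidalSurfniType}) the metric coefficients
$$
g_{11}=x'^2-y'^2,\quad g_{12}=\phi(x'y-xy'),\quad g_{22}=-\phi^2(x^2-y^2),
$$
and from the second fundamental form
$$
h_{11}=\frac{c(x''y'-x'y'')}{\phi(xx'-yy')},\quad h_{12}=\frac{-c(x'^2-y'^2)}{xx'-yy'},\quad h_{22}=\frac{c\phi(xy'-x'y)}{xx'-yy'}.
$$
The denominator is already known to equal $g_{11}g_{22}-g_{12}^2=-\phi^2(xx'-yy')^2$; at bottom this is the algebraic identity $(x'^2-y'^2)(x^2-y^2)+(x'y-xy')^2=(xx'-yy')^2$, which I would confirm by expanding both sides.

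For the Gaussian curvature I would next form $h_{11}h_{22}-h_{12}^2$. Since both $h_{11}h_{22}$ and $h_{12}^2$ share the factor $c^2/(xx'-yy')^2$, this collapses to
$$
h_{11}h_{22}-h_{12}^2=\frac{c^2}{(xx'-yy')^2}\bigl[(x''y'-x'y'')(xy'-x'y)-(x'^2-y'^2)^2\bigr].
$$
Dividing by the negative determinant $-\phi^2(xx'-yy')^2$ flips the sign of the bracket and produces the stated formula for $K$.

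For the mean curvature I would assemble $g_{11}h_{22}-2g_{12}h_{12}+g_{22}h_{11}$, factoring out the common $c\phi/(xx'-yy')$. Setting $A=x'y-xy'$, so that $xy'-x'y=-A$, the two terms $g_{11}h_{22}$ and $-2g_{12}h_{12}$ combine to $(x'^2-y'^2)A$, while $g_{22}h_{11}$ supplies $-(x^2-y^2)(x''y'-x'y'')$. Dividing by twice the negative determinant and rewriting $-(x'^2-y'^2)A$ as $(x'^2-y'^2)(xy'-x'y)$ gives exactly the claimed expression for $H$.

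The whole argument is computational, with no conceptual difficulty; the one point demanding care is sign bookkeeping. Because the induced metric has index one, both $g_{22}$ and the determinant $g_{11}g_{22}-g_{12}^2$ are negative, and the cancellation between $g_{11}h_{22}$ and $-2g_{12}h_{12}$ relies on applying the antisymmetry $xy'-x'y=-(x'y-xy')$ correctly. That cancellation is the most error-prone step, and verifying it is where I would concentrate the checking.
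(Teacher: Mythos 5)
Your proposal is correct and follows essentially the same route as the paper: the proposition is obtained by reading off $g_{ij}$ and $h_{ij}$ from the first and second fundamental forms stated just before it, using $\det g_{ij}=-\phi^{2}(xx'-yy')^{2}$ (equivalently your identity $(x'^{2}-y'^{2})(x^{2}-y^{2})+(x'y-xy')^{2}=(xx'-yy')^{2}$), and substituting into the general formulas $K=(h_{11}h_{22}-h_{12}^{2})/(g_{11}g_{22}-g_{12}^{2})$ and $H=\frac{1}{2}(g_{11}h_{22}-2g_{12}h_{12}+g_{22}h_{11})/(g_{11}g_{22}-g_{12}^{2})$ from Subsection 2.2. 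Your sign bookkeeping, including the factor $h_{12}=-c(x'^{2}-y'^{2})/(xx'-yy')$ and the sign flip coming from the negative determinant, checks out and reproduces both stated expressions.
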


On the other hand, the first fundamental form of a helicoidal surface of i-type is given by Eqs. (\ref{eq::ZhPseudoIso1stFFhelicoidalSurfiType}) and (\ref{eq::WhPseudoIso1stFFhelicoidalSurfiType}):
$$
\left\{
\begin{array}{lc}
\mathrm{I}=x'\,^2\mathrm{d}u^2-\phi^2x^2\rmd t^2 &,\mbox{ if }
\alpha=(x,0,z)\\
\mathrm{I}=-y'\,^2\mathrm{d}u^2+\phi^2y^2\rmd t^2&,\mbox{ if }
\alpha=(0,y,z)\\
\end{array}
\right..
$$

%For the second fundamental form, we first compute
%$$\partial_uZh_2\times_1\partial_tZh_2 = (cx'\sinh(t\phi)-\phi xz'\cosh(t\phi),cx'\cosh(t\phi)-\phi xz'\sin(t\phi),\phi xx').$$
The relative normal is  given by
\begin{equation}
N_h = \left(\frac{cx'\sinh(t\phi)-c\phi xz'\cosh(t\phi)}{\phi xx'},\frac{cx'\cosh(t\phi)-c\phi xz'\sinh(t\phi)}{\phi xx'},1\right).
\end{equation}
%Taking the second derivatives of $Zh_2$,
%$$\partial_u^2Zh_2 = (x''\cosh(t\phi),x''\sinh(t\phi),z''),$$
%$$\partial_{ut}^2Zh_2 = (\phi x'\sinh(t\phi),\phi x'\cosh(t\phi),0),$$
%and
%$$\partial_t^2Zh_2 = (\phi^2 x\cosh(t\phi),\phi^2x\sinh(t\phi),0),$$
The second fundamental form is
\begin{equation}
    \left\{
\begin{array}{lc}
\mathrm{II}=-\displaystyle\frac{x(x''z'-x'z'')\rmd u^2+2c x'\,^2\rmd u\rmd t+\phi^2 x^2z'\rmd t^2}{xx'}&,\mbox{ if }
\alpha=(x,0,z)\\[5pt]
\mathrm{II}=-\displaystyle\frac{y(y''z'-y'z'')\rmd u^2+2c y'\,^2\rmd u\rmd t+\phi^2 y^2z'\rmd t^2}{yy'}&,\mbox{ if }
\alpha=(0,y,z)\\
\end{array}
\right..
\end{equation}

\begin{proposition}
The Gaussian and mean curvatures of an admissible and pseudo-isotropic helicoidal surface of i-type are respectively
\begin{equation}
K = \frac{c^2}{\phi^2f^4}-\frac{z'}{ff'\,^4}(f''z'-f'z'')\mbox{ and }H=\varepsilon\,\frac{f'\,^2z'-f(f''z'-f'z'')}{2ff'\,^3},
\end{equation}
where $\varepsilon=+1$
if the generating curve is spacelike, $\alpha(u)=(f(u),0,z(u))$, and $\varepsilon=-1$ if the generating curve is timelike, $\alpha(u)=(0,f(u),z(u))$.
\end{proposition}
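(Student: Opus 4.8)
The plan is to obtain $K$ and $H$ by direct substitution, since both the first and second fundamental forms for the i-type helicoidal surfaces have already been recorded in Eqs. (\ref{eq::ZhPseudoIso1stFFhelicoidalSurfiType}), (\ref{eq::WhPseudoIso1stFFhelicoidalSurfiType}) and in the display of $\mathrm{II}$ preceding the statement. Both forms are diagonal, so $g_{12}=0$ and the general expressions
$$K=\frac{h_{11}h_{22}-h_{12}^2}{g_{11}g_{22}-g_{12}^2},\qquad H=\frac{1}{2}\,\frac{g_{11}h_{22}-2g_{12}h_{12}+g_{22}h_{11}}{g_{11}g_{22}-g_{12}^2}$$
from Subsect. \ref{subsec::IsotropicSurf} reduce to manageable ratios.

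First I would treat the spacelike case $\alpha=(f,0,z)$. Reading off $g_{11}=f'^2$, $g_{22}=-\phi^2 f^2$ and $h_{11}=-(f''z'-f'z'')/f'$, $h_{12}=-cf'/f$, $h_{22}=-\phi^2 f z'/f'$, the metric determinant is $g_{11}g_{22}-g_{12}^2=-\phi^2 f^2 f'^2$. Substituting and splitting the numerator of $K$ into its two pieces produces exactly $c^2/(\phi^2 f^4)-z'(f''z'-f'z'')/(ff'^4)$, while the formula for $H$ collapses to $[f'^2 z'-f(f''z'-f'z'')]/(2ff'^3)$, i.e.\ the stated expressions with $\varepsilon=+1$.

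The timelike case $\alpha=(0,f,z)$ is where the single conceptual point lies. Here the causal character reverses the signs in the first fundamental form, giving $g_{11}=-f'^2$ and $g_{22}=+\phi^2 f^2$, whereas the coefficients of $\mathrm{II}$ retain exactly the same form in $f,z$. Because the metric determinant involves the \emph{product} $g_{11}g_{22}$, the two sign reversals cancel and it remains $-\phi^2 f^2 f'^2$; since the second fundamental form is unchanged, $K$ is left invariant, which is precisely why the Gaussian curvature carries no $\varepsilon$. By contrast, the numerator of $H$ reduces to the \emph{sum} $g_{11}h_{22}+g_{22}h_{11}$, which is linear in the $g_{ij}$, so reversing the signs of both $g_{11}$ and $g_{22}$ flips its overall sign and yields the factor $\varepsilon=-1$. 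Assembling the two cases under the common symbol $f$ gives the unified formulas.

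I do not expect a genuine obstacle: the argument is essentially substitution and simplification. The only step demanding care is the sign bookkeeping between the two causal characters, and in particular the observation that the common metric determinant $-\phi^2 f^2 f'^2$---a manifestation of the index-one induced metric---makes $K$ independent of the causal character while forcing $H$ to acquire the sign $\varepsilon$.
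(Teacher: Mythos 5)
Your proposal is correct and follows essentially the same route as the paper: the paper's (largely omitted) argument is precisely to substitute the already-computed coefficients $g_{ij}$ and $h_{ij}$ of the two fundamental forms into the general formulas $K=(h_{11}h_{22}-h_{12}^2)/(g_{11}g_{22}-g_{12}^2)$ and $H=\frac{1}{2}(g_{11}h_{22}-2g_{12}h_{12}+g_{22}h_{11})/(g_{11}g_{22}-g_{12}^2)$, exactly as you do. Your sign bookkeeping for the two causal characters---the product $g_{11}g_{22}$ being insensitive to the simultaneous sign flip (so $K$ carries no $\varepsilon$) while the linear combination $g_{11}h_{22}+g_{22}h_{11}$ flips sign (producing $\varepsilon=-1$)---is accurate and is a clean way to explain the structure of the stated formulas.
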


Notice that if $c=0$  we obtain the Lorentzian revolution surfaces. Revolution surfaces of ni-type are  contained in the $xy$-plane and then $K=0=H$. For helicoidal surfaces of i-type $H$ does not depend on $c$: $H_{c=0}=H_{c\not=0}$. (Only $K$ depends on $c$.)

\begin{corollary}
The Gaussian and mean curvatures of an admissible pseudo-isotropic revolution surface of i-type are respectively
\begin{equation}
K = -\frac{z'}{ff'\,^4}(f''z'-f'z'')\mbox{ and }H=\varepsilon\,\frac{f'\,^2z'-f(f''z'-f'z'')}{2ff'\,^3},
\end{equation}
where $\varepsilon=+1$
if the generating curve is spacelike, $\alpha(u)=(f(u),0,z(u))$, and $\varepsilon=-1$ if the generating curve is timelike, $\alpha(u)=(0,f(u),z(u))$.
\end{corollary}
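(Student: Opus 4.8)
The plan is to obtain this Corollary as the immediate $c=0$ specialization of the preceding Proposition, which computes $K$ and $H$ for the full helicoidal family. First I would observe that the pseudo-isotropic revolution surfaces of i-type, namely $Zh_1$ and $Wh_1$, are precisely the helicoidal surfaces $Zh_2$ and $Wh_2$ with $c=0$: substituting $c=0$ into $Zh_2(u,t)=(x\cosh(t\phi),x\sinh(t\phi),z+ct)$ recovers $Zh_1$, and likewise $Wh_2\mapsto Wh_1$. Since the generating curve, the first fundamental form, and the relative normal all reduce continuously as $c\to0$, the curvature formulas of the Proposition specialize directly.

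Carrying out the substitution, the mean curvature $H=\varepsilon\,(f'^2 z'-f(f''z'-f'z''))/(2ff'^3)$ contains no occurrence of $c$ and is therefore unchanged, while the Gaussian curvature loses exactly the term $c^2/(\phi^2 f^4)$, leaving $K=-z'(f''z'-f'z'')/(ff'^4)$. This yields the two displayed expressions with the same sign convention $\varepsilon=+1$ for the spacelike case $\alpha=(f,0,z)$ and $\varepsilon=-1$ for the timelike case $\alpha=(0,f,z)$.

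If one prefers a self-contained verification rather than invoking continuity, the structural reason that $H$ is insensitive to $c$ is worth recording: in the i-type the first fundamental form is diagonal (no $g_{12}$ cross term), so in $H=\tfrac12(g_{11}h_{22}-2g_{12}h_{12}+g_{22}h_{11})/(g_{11}g_{22}-g_{12}^2)$ the coefficient $h_{12}$---the only place $c$ enters the second fundamental form---is annihilated. By contrast $K=(h_{11}h_{22}-h_{12}^2)/(g_{11}g_{22}-g_{12}^2)$ retains $h_{12}^2\propto c^2$, which is exactly the term that drops out when $c=0$. There is essentially no obstacle here; the only point requiring minor care is checking that the admissibility hypothesis survives the specialization, so that the denominators $ff'$ and $f'$ do not vanish, which it does since admissibility of the revolution surfaces $Zh_1$ and $Wh_1$ was already established in the earlier classification.
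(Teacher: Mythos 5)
Your proposal is correct and follows exactly the paper's route: the paper obtains this corollary by observing that the Lorentzian revolution surfaces are the $c=0$ case of the helicoidal family and that in the preceding Proposition only $K$ (through the term $c^2/\phi^2f^4$) depends on $c$, while $H$ does not. Your added structural remark---that $g_{12}=0$ for the i-type metric forces the $c$-dependent coefficient $h_{12}$ to drop out of $H$ but survive in $K$ as $h_{12}^2$---is a nice sharpening of the paper's bare observation, but it is the same argument.
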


\subsubsection{Pseudo-isotropic helicoidal surfaces with prescribed curvature}

\begin{thm}\label{thr::PseudoPrescKhelSurfNiType}
Let $K(s)$ be continuous. There exists a 2-parameter family of helicoidal surfaces of ni-type with $K$ as the Gaussian curvature and whose generating curves $\alpha_{k_0,k_1}(s)$, $s$ arc-length parameter, are implicitly given by
\begin{equation}
 x^2(s)-y^2(s) = k_0 +2\int_{s_0}^{s}\left(k_1-\frac{3\varepsilon\phi^2}{c^2}\int_{s_0}^vK(w)\,\rmd w\right)^{-\frac{1}{3}}\rmd v,
\end{equation}
where $k_i$ ($i=1,2$) is a constant depending on the values of $x,y,x',y'$ at $s=s_0$ and $\varepsilon=x'\,^2-y'\,^2=\pm1$ determines the causal character of $\alpha_{k_0,k_1}(s)$. 
\end{thm}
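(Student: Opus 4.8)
The plan is to mirror the argument of Theorem \ref{thr::SimplyPrescKhelSurfNiType}, exploiting that along a ni-type generating curve $\alpha(s)=(x(s),y(s),0)$ the induced metric is the Lorentzian one, so an arc-length parameterization means $x'\,^2-y'\,^2=\varepsilon$ with $\varepsilon=\pm1$ fixed (determining whether $\alpha$ is spacelike or timelike). Differentiating this constraint yields the auxiliary identity $x'x''=y'y''$, which is the pseudo-isotropic counterpart of the relation $x'x''=-y'y''$ used in the simply isotropic prototype.

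First I would substitute the arc-length data into the Gaussian curvature of an admissible ni-type helicoidal surface, $K=\frac{c^2}{\phi^2}\frac{(x'\,^2-y'\,^2)^2-(xy'-x'y)(x''y'-x'y'')}{(xx'-yy')^4}$, and simplify the numerator $N=(x'\,^2-y'\,^2)^2-(xy'-x'y)(x''y'-x'y'')$. Using $(x'\,^2-y'\,^2)^2=\varepsilon^2=1$ and substituting $x'x''=y'y''$ into the two cross terms of the four-term expansion, the bracketed product regroups as $xx''(y'\,^2-x'\,^2)+yy''(x'\,^2-y'\,^2)=-\varepsilon(xx''-yy'')$. Writing also $1=\varepsilon(x'\,^2-y'\,^2)$, this gives $N=\varepsilon[(x'\,^2-y'\,^2)+(xx''-yy'')]=\varepsilon\,\frac{\rmd}{\rmd s}(xx'-yy')$, the exact analogue of the identity $-(1+xx''+yy'')=-\frac{\rmd}{\rmd s}(xx'+yy')$ from the simply isotropic case.

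With the numerator in this form, $K$ becomes an exact derivative, $K=-\frac{\varepsilon c^2}{3\phi^2}\frac{\rmd}{\rmd s}(xx'-yy')^{-3}$. Integrating once and using $1/\varepsilon=\varepsilon$ yields $(xx'-yy')^{-3}=k_1-\frac{3\varepsilon\phi^2}{c^2}\int_{s_0}^sK$, hence $xx'-yy'=\left(k_1-\frac{3\varepsilon\phi^2}{c^2}\int_{s_0}^sK\right)^{-1/3}$. Recognizing $xx'-yy'=\tfrac12(x^2-y^2)'$ and integrating a second time produces the stated implicit formula for $x^2(s)-y^2(s)$, with $k_0$ and $k_1$ the two integration constants fixed by the values of $x,y,x',y'$ at $s=s_0$. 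Conversely, any solution $(x,y)$ of this relation subject to $x'\,^2-y'\,^2=\varepsilon$ generates a ni-type helicoidal surface realizing the prescribed $K$, which accounts for the 2-parameter family.

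The main obstacle I anticipate is purely the bookkeeping in the numerator simplification: collapsing $(xy'-x'y)(x''y'-x'y'')$ to $-\varepsilon(xx''-yy'')$ requires applying $x'x''=y'y''$ in exactly the right two terms and carefully tracking the role of $\varepsilon$, which enters both through $(x'\,^2-y'\,^2)^2=1$ and through the replacement $1/\varepsilon=\varepsilon$ after the first integration. Beyond that step everything reduces to the same two successive quadratures as in the simply isotropic case, so I would present it briefly and refer to Theorem \ref{thr::SimplyPrescKhelSurfNiType} for the parallel details.
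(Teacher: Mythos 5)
Your proposal is correct and follows essentially the same route as the paper's own proof: arc-length normalization $x'^2-y'^2=\varepsilon$ giving $x'x''=y'y''$, reduction of the curvature numerator to $\varepsilon\,\frac{\rmd}{\rmd s}(xx'-yy')$, recognition of $K$ as the exact derivative $-\varepsilon\frac{c^2}{3\phi^2}\frac{\rmd}{\rmd s}(xx'-yy')^{-3}$, and two successive quadratures using $\tfrac12(x^2-y^2)'=xx'-yy'$. The algebraic simplification and the handling of $\varepsilon$ (including $1/\varepsilon=\varepsilon$) are exactly as in the paper, so no gap to report.
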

\begin{proof}
If $\alpha(s)$ is parameterized by arc-length $s$, then
$$x'\,^2-y'\,^2=\varepsilon\in\{-1,+1\}\Rightarrow x'x''=y'y''.$$ Therefore, the Gaussian curvature is 
\begin{eqnarray}
K & = & \frac{c^2}{\phi^2}\frac{1-xx''y'^2+xx'y'y''+x'x''yy'-yy''x'^2}{(xx'-yy')^4}\nonumber\\
& = &  \varepsilon\frac{c^2}{\phi^2}\frac{\varepsilon+xx''-yy''}{(xx'-yy')^4}=-\varepsilon\frac{c^2}{3\phi^2}\frac{\rmd}{\rmd s}(xx'-yy')^{-3}.\nonumber
\end{eqnarray}
Integration gives 
$$\frac{1}{2}(x^2-y^2)'=(xx'-yy')=\left(k_1-\frac{3\varepsilon\phi^2}{c^2}\int K\right)^{-\frac{1}{3}}.$$
Finally, integrating this last equation gives the desired result.
\end{proof}

As in $\mathbb{I}^3$, for pseudo-isotropic helicoidal surfaces of i-type we solve both Gaussian and mean prescribed curvatures problems. (See proofs of Theorems \ref{thr::SimplyPrescKhelSurfiType} and \ref{thr::SimplyPrescHhelSurfiType}.)

\begin{thm}\label{thr::PseudoPrescKhelSurfiType}
Let $K(s)$ be a continuous function. Then, there exists a 2-parameter family of pseudo-isotropic helicoidal surfaces of i-type with $K$ as the Gaussian curvature and whose generating curves $\alpha_{k_0,k_1}(s)=(s,0,z(s))$, or $\alpha_{k_0,k_1}(s)=(0,s,z(s))$, where $s>0$ is an arc-length parameter, satisfy
\begin{equation}
 z(s) = k_0 +\int_{s_0}^{s}\left(k_1+\frac{c^2}{\phi^2v^2}+2\int_{s_0}^vwK(w)\,\rmd w\right)^{\frac{1}{2}}\rmd v,
\end{equation}
where $k_i$ ($i=1,2$) is a constant depending on the values of $z,z'$ at $s=s_0$. 
\end{thm}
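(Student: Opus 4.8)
The plan is to follow closely the strategy used in the proof of Theorem~\ref{thr::SimplyPrescKhelSurfiType}, the only structural change being a single sign in the relevant curvature formula. First I would write the generating curve as $\alpha(s)=(f(s),0,z(s))$ in the spacelike case and as $\alpha(s)=(0,f(s),z(s))$ in the timelike case, and impose that $s$ be a pseudo-isotropic arc-length parameter. Because $\langle\cdot,\cdot\rangle_{pz}$ depends only on the top view, this condition reads $\langle\alpha',\alpha'\rangle_{pz}=\varepsilon f'\,^2=\varepsilon$ with $\varepsilon=\pm1$, whence $f'\,^2=1$; fixing the orientation $f'=1$ gives $f(s)=s$ and $f''=0$ in both causal cases. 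This is precisely the observation that an arc-length parameterization forces $\alpha$ to be a graph curve, and it explains why the spacelike and timelike families lead to one and the same ordinary differential equation.

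Next I would substitute $f=s$, $f'=1$, $f''=0$ into the Gaussian curvature of a helicoidal surface of i-type obtained in Subsect.~\ref{subsec::DGPseudoInvHelSurf}, namely $K=\frac{c^2}{\phi^2f^4}-\frac{z'}{ff'\,^4}(f''z'-f'z'')$, which collapses to
\[
K=\frac{c^2}{\phi^2 s^4}+\frac{z'z''}{s}=\frac{c^2}{\phi^2 s^4}+\frac{1}{s}\frac{\rmd}{\rmd s}\frac{z'\,^2}{2}.
\]
The essential point, exactly as in the $\mathbb{I}^3$ case, is that the $z$-dependent term is a perfect derivative. Isolating it, multiplying by $s$, and integrating once gives
\[
z'\,^2=k_1+\frac{c^2}{\phi^2 s^2}+2\int_{s_0}^{s}wK(w)\,\rmd w,
\]
where the term $+c^2/(\phi^2 s^2)$ arises from $-\int c^2/(\phi^2 s^3)\,\rmd s=+c^2/(2\phi^2 s^2)$. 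A second integration, after extracting the square root, then yields the asserted expression for $z(s)$, with $k_0,k_1$ absorbing the integration constants fixed by the values of $z,z'$ at $s=s_0$.

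I do not anticipate a genuine obstacle, since the computation is a direct transcription of the simply isotropic argument. The two points deserving attention are purely matters of bookkeeping: the sign of the $c^2/\phi^2$ contribution, which ends up \emph{positive} here, opposite to Theorem~\ref{thr::SimplyPrescKhelSurfiType}, precisely because the pseudo-isotropic curvature carries $+c^2/\phi^2f^4$ rather than $-c^2/\phi^2 x^4$; and the uniformity of the reduction $f(s)=s$ across both causal characters, which is what permits a single closed form to cover $\alpha(s)=(s,0,z(s))$ and $\alpha(s)=(0,s,z(s))$ simultaneously. The hypothesis $s>0$ simply guarantees that the coefficients $c^2/(\phi^2 s^4)$ and $1/s$ remain finite along the curve.
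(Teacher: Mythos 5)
Your proposal is correct and follows essentially the same route as the paper, which proves this theorem by referring back to the argument of Theorem~\ref{thr::SimplyPrescKhelSurfiType}: reduce to $f(s)=s$ via the arc-length condition (valid uniformly in both causal cases, since $K$ does not involve $\varepsilon$), recognize $z'z''/s$ as a perfect derivative, and integrate twice, with the sign of the $c^2/\phi^2$ term flipped exactly as you note. No gaps.
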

\begin{thm}\label{thr::PseudoPrescHhelSurfiType}
Let $H(s)$ be a continuous function. Then, there exists a 2-parameter family of pseudo-isotropic helicoidal surfaces of i-type with $H$ as the mean curvature and whose generating curves $\alpha_{h_0,h_1}(s)=(s,0,z(s))$, or $\alpha_{h_0,h_1}(s)=(0,s,z(s))$, where $s>0$ is an arc-length parameter, satisfy
\begin{equation}
 z(s) = h_0 +h_1\ln\, s+\varepsilon\int_{s_0}^{s}\left(\frac{2}{v}\int_{s_0}^vwH(w)\,\rmd w\right)\rmd v,
\end{equation}
where $h_i$ ($i=1,2$) is a constant depending on the values of $z,z'$ at $s=s_0$ and $\varepsilon=\pm1$ determines the causal character of the generating curves. 
\end{thm}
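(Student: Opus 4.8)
The plan is to mirror exactly the argument used for the simply isotropic case (Theorem \ref{thr::SimplyPrescHhelSurfiType}), since the mean curvature formula for pseudo-isotropic helicoidal surfaces of i-type differs only by the sign factor $\varepsilon$. First I would take the generating curve in arc-length parametrization: $\alpha(s)=(s,0,z(s))$ in the spacelike case (with $\varepsilon=+1$) or $\alpha(s)=(0,s,z(s))$ in the timelike case (with $\varepsilon=-1$). In either case the relevant profile function is $f(s)=s$, so $f'\equiv1$ and $f''\equiv0$. The key observation is that an arc-length parametrization forces $\alpha$ to be a graph over the $x$-axis (resp. $y$-axis), which is precisely why $f=s$ is legitimate.

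Next I would substitute $f=s$, $f'=1$, $f''=0$ into the mean curvature formula from the preceding Proposition,
\begin{equation}
H=\varepsilon\,\frac{f'\,^2z'-f(f''z'-f'z'')}{2ff'\,^3},\nonumber
\end{equation}
which collapses to
\begin{equation}
H=\varepsilon\,\frac{z'+s\,z''}{2s}=\frac{\varepsilon}{2s}\frac{\rmd}{\rmd s}\bigl(s\,z'\bigr).\nonumber
\end{equation}
The whole proof then rests on recognizing the numerator as the total derivative $\frac{\rmd}{\rmd s}(sz')=z'+sz''$. This is the only genuinely substantive step, and it is essentially a bookkeeping identity rather than an obstacle.

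From here the argument is two successive quadratures. Using $\varepsilon^2=1$ (so $1/\varepsilon=\varepsilon$) and integrating once gives
\begin{equation}
s\,z'=h_1+2\varepsilon\int_{s_0}^{s}w\,H(w)\,\rmd w,\nonumber
\end{equation}
where $h_1:=s_0\,z'(s_0)$ absorbs the initial data. Dividing by $s$ and integrating a second time produces
\begin{equation}
z(s)=h_0+h_1\ln s+\varepsilon\int_{s_0}^{s}\left(\frac{2}{v}\int_{s_0}^{v}w\,H(w)\,\rmd w\right)\rmd v,\nonumber
\end{equation}
which is exactly the claimed formula; the logarithmic term arises from integrating $h_1/s$, and the restriction $s>0$ is what makes $\ln s$ and the division by $v$ meaningful. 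The second constant $h_0:=z(s_0)$ accounts for the remaining integration constant, giving the advertised two-parameter family.

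The expected difficulties are minor: one must confirm that $H$ being merely continuous suffices for the inner integral $\int_{s_0}^{v}wH\,\rmd w$ to be $C^1$ and hence for the double integral to be well defined (this follows from the fundamental theorem of calculus), and one must verify conversely that the $z$ so constructed actually yields the prescribed $H$, which is immediate upon differentiating twice and retracing the identity $H=\frac{\varepsilon}{2s}\frac{\rmd}{\rmd s}(sz')$. No step presents a real obstacle; the content lies entirely in the total-derivative recognition, after which the result is a routine double integration.
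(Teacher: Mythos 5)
Your proposal is correct and follows essentially the paper's own route: the paper gives no separate proof for this theorem but simply refers the reader to the proof of Theorem \ref{thr::SimplyPrescHhelSurfiType}, which is exactly the argument you reproduce --- write $H=\frac{\varepsilon}{2s}\frac{\rmd}{\rmd s}(sz')$ via the total-derivative identity and integrate twice, with $\varepsilon^2=1$ handling the sign. The only nitpick is that $h_0$ should be $z(s_0)-s_0z'(s_0)\ln s_0$ rather than $z(s_0)$, which is immaterial since the theorem only asks that $h_0$ depend on the initial data at $s_0$.
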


\begin{example}[Flat helicoidal surfaces]
Setting $K(s)\equiv0$ in Theorem \ref{thr::PseudoPrescKhelSurfiType}, gives
$$z(s)=z_0+s\sqrt{z_1+\frac{c^2}{\phi^2s^2}}-\frac{c}{\phi}\ln\left(\frac{c}{\phi s}+\sqrt{z_1+\frac{c^2}{\phi^2s^2}}\right),$$
where $z_0$ and $z_1$ are arbitrary constants.
\end{example}
\begin{example}[Revolution surfaces with constant Gaussian curvature]
Setting $K(s)\equiv K_0$ and $c=0$ in Theorem \ref{thr::PseudoPrescKhelSurfiType} gives the same equation as in $\mathbb{I}^3$. Thus, $z(s)$ is given in example \ref{exe::EuclRevSurfKcte}.
(For $z_0,K_0>0$ we recover Theorem 5.1 in \cite{AydinTJM2018}.)
\end{example}
\begin{example}[CMC helicoidal surfaces]
Setting $H(s)\equiv H_0$ in Theorem \ref{thr::PseudoPrescHhelSurfiType}, gives
$$
z(s) = z_0 + z_1 \ln s + \frac{\varepsilon}{2} H_0\,s^2,
$$
where $z_0$ and $z_1$ are constants. All minimal helicoidal surfaces are ``logarithmoids''. If $\varepsilon=+1$, i.e., $\alpha(s)=(s,0,z(s))$, we recover Theorem 5.2 in \cite{AydinTJM2018}. 
\end{example}

\subsection{Parabolic revolution and warped translation pseudo-isotropic surfaces}
\label{subsec::DGPseudoInvParaSurf}

The first fundamental form of a parabolic revolution surface of ni-type is given by Eq. (\ref{eq::PseudoIso1stFFParaRevSurfniType}):
$$\mathrm{I}=(x'\,^2-y'\,^2)\mathrm{d}u^2+2(ax'-by')\mathrm{d}u\rmd t+(a^2-b^2)\rmd t^2.$$

The relative normal is given by
$$
N_h =(\frac{[c+c_1x+c_2y]y'+c_1t[ay'-bx']}{bx'-ay'},\frac{[c+c_1x+c_2y]x'-c_2t[ay'-bx']}{bx'-ay'},1).
$$
The second fundamental form is
\begin{equation}
\mathrm{II} = \frac{c+c_1x+c_2y}{bx'-ay'}(x''y'-x'y'')\mathrm{d}u^2+2(c_1x'+c_2y')\mathrm{d}u\mathrm{d}t+(ac_1+bc_2)\mathrm{d}t^2.
\end{equation}

\begin{proposition}
The Gaussian and mean curvatures of an admissible parabolic revolution surface of ni-type are respectively
\begin{equation}
K = \left(\frac{c_1x'+c_2y'}{bx'-ay'}\right)^2-\frac{(ac_1+bc_2)(c+c_1x+c_2y)}{(bx'-ay')^3}(x''y'-x'y'')
\end{equation}
and
\begin{equation}
H = -\frac{(a^2-b^2)(c+c_1x+c_2y)(x''y'-x'y'')}{2(bx'-ay')^3}+
\end{equation}
$$+\frac{(ax'-by')(c_1x'+c_2y')}{(bx'-ay')^2}-\frac{(x'\,^2-y'\,^2)(ac_1+bc_2)}{2(bx'-ay')^2}.$$
\end{proposition}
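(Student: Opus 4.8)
The plan is to obtain $K$ and $H$ by a direct substitution into the coefficient formulas
$$K=\frac{h_{11}h_{22}-h_{12}^2}{g_{11}g_{22}-g_{12}^2},\qquad H=\frac{1}{2}\frac{g_{11}h_{22}-2g_{12}h_{12}+g_{22}h_{11}}{g_{11}g_{22}-g_{12}^2},$$
using the two fundamental forms already displayed above the statement. First I would simply read off the metric coefficients from the first fundamental form in Eq. (\ref{eq::PseudoIso1stFFParaRevSurfniType}), namely $g_{11}=x'^2-y'^2$, $g_{12}=ax'-by'$, $g_{22}=a^2-b^2$, together with the coefficients of the second fundamental form, $h_{11}=(c+c_1x+c_2y)(x''y'-x'y'')/(bx'-ay')$, $h_{12}=c_1x'+c_2y'$, and $h_{22}=ac_1+bc_2$. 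Since both forms have been computed in the lines preceding the statement, no further geometric input (no re-derivation of the relative normal $N_h$) is needed.

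The one non-routine ingredient is the determinant of the induced metric. Rather than expand $g_{11}g_{22}-g_{12}^2$ afresh, I would invoke the computation already carried out in the admissibility discussion for Proposition \ref{prop::PseudoIsoAdmissSurfForGrenzebewegung}, where it was shown that $g_{11}g_{22}-g_{12}^2=-(bx'-ay')^2$. The crucial point is the \emph{negative} sign, which reflects the index-$1$ (timelike) character of admissible surfaces in $\mathbb{I}_{\mathrm{p}}^3$; carrying this sign correctly is what distinguishes the pseudo-isotropic formulas from their simply isotropic analogues, and it is the step where an error is most likely to creep in.

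With these ingredients the Gaussian curvature follows at once:
$$K=\frac{\dfrac{(ac_1+bc_2)(c+c_1x+c_2y)(x''y'-x'y'')}{bx'-ay'}-(c_1x'+c_2y')^2}{-(bx'-ay')^2},$$
and distributing the denominator over the two numerator terms produces the claimed expression, the first term picking up the cubic denominator $(bx'-ay')^3$ and the second retaining the square. For the mean curvature I would assemble the numerator $g_{11}h_{22}-2g_{12}h_{12}+g_{22}h_{11}$ term by term, divide by $-2(bx'-ay')^2$, and collect the three resulting fractions: the factor $a^2-b^2$ multiplying $h_{11}$ (which itself carries a $(bx'-ay')$ in its denominator) yields the $(bx'-ay')^3$ denominator of the first summand, while the contributions from $g_{11}h_{22}$ and $2g_{12}h_{12}$ keep the quadratic denominator.

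The whole argument is essentially bookkeeping, so I do not expect any conceptual obstacle; the only care required is to track the single negative sign coming from $\det g_{ij}$ consistently through both formulas and to keep the mixed term $2g_{12}h_{12}$ with its correct sign when expanding $H$. As a sanity check one may specialize to $c=0$ and $ac_1+bc_2=0$ to isolate the warped-translation case, and one may compare the structure with the simply isotropic parabolic revolution surface of ni-type, where the identical substitution but with $\det g_{ij}=+(bx'-ay')^2$ reproduces the analogous result with the appropriately flipped signs.
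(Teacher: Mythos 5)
Your proposal is correct and takes essentially the same route as the paper: the paper likewise obtains this proposition by substituting the coefficients of the displayed first and second fundamental forms into the general formulas $K=(h_{11}h_{22}-h_{12}^2)/(g_{11}g_{22}-g_{12}^2)$ and $H=\frac{1}{2}(g_{11}h_{22}-2g_{12}h_{12}+g_{22}h_{11})/(g_{11}g_{22}-g_{12}^2)$, with $\det g_{ij}=-(bx'-ay')^2$ coming from the admissibility computation in Proposition \ref{prop::PseudoIsoAdmissSurfForGrenzebewegung}. Your bookkeeping, including the single negative sign from the timelike induced metric, reproduces both stated expressions exactly.
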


On the other hand, the first fundamental form of a parabolic revolution surface of i-type is given by Eqs. (\ref{eq::ZhPseudoIso1stFFParaRevSurfiType}) and (\ref{eq::WhPseudoIso1stFFParaRevSurfiType}):
$$
\left\{\begin{array}{lc}
\mathrm{I}=x'\,^2\mathrm{d}u^2+2ax'\mathrm{d}u\rmd t+(a^2-b^2)\rmd t^2&,\mbox{ if } \alpha=(x,0,z)\\
\mathrm{I}=-y'\,^2\mathrm{d}u^2-2by'\rmd u\rmd t+(a^2-b^2)\rmd t^2&,\mbox{ if }\alpha=(0,y,z)
\end{array}
\right..
$$
The relative normal is given by
\begin{equation}
N_h = \left(\frac{-bc_1tx'-bz'}{bx'},\frac{cx'-az'+bc_2tx'+c_1xx'}{bx'},1\right).
\end{equation}
The second fundamental form is
\begin{equation}
    \left\{\begin{array}{lc}
\mathrm{II}=-\frac{x''z'-x'z''}{x'}\rmd u^2+2c_1x'\rmd u\rmd t+(ac_1+bc_2)\rmd t^2&,\mbox{ if } \alpha=(x,0,z)\\[5pt]
\mathrm{II}=-\frac{y''z'-y'z''}{y'}\rmd u^2+2c_2y'\rmd u\rmd t+(ac_1+bc_2)\rmd t^2&,\mbox{ if }\alpha=(0,y,z)
\end{array}
\right..
\end{equation}

\begin{proposition}
The Gaussian and mean curvatures of an admissible and pseudo-isotropic parabolic revolution surface of i-type are respectively 
\begin{equation}
K=\frac{(ac_1+bc_2)(f''z'-f'z'')}{B^2f'\,^3}+\frac{C^2}{B^2},
H = \frac{(a^2-b^2)}{2B^2}\frac{(f''z'-f'z'')}{f'\,^3}-\frac{bc_2-ac_1}{2B^2},
\end{equation}
where $C=c_1$ and $B=b$, if the generating curve is $\alpha(u)=(f(u),0,z(u))$ ($\alpha$ spacelike), and $C=c_2$ and $B=a$, if  $\alpha(u)=(0,f(u),z(u))$ ($\alpha$ timelike).
\end{proposition}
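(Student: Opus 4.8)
The plan is to obtain both curvatures by direct substitution into the master formulas
$$
K = \frac{h_{11}h_{22}-h_{12}^2}{g_{11}g_{22}-g_{12}^2},
\qquad
H = \frac{1}{2}\,\frac{g_{11}h_{22}-2g_{12}h_{12}+g_{22}h_{11}}{g_{11}g_{22}-g_{12}^2},
$$
derived in Subsect.~\ref{subsec::IsotropicSurf}, since the coefficients of $\mathrm{I}$ and $\mathrm{II}$ for the i-type parabolic revolution surfaces have already been recorded immediately above the statement.

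First I would dispose of the spacelike branch $\alpha=(f,0,z)$. From $\mathrm{I}$ one reads $g_{11}={f'}^2$, $g_{12}=af'$, $g_{22}=a^2-b^2$, so the common denominator collapses to $g_{11}g_{22}-g_{12}^2={f'}^2(a^2-b^2)-a^2{f'}^2=-b^2{f'}^2$; from $\mathrm{II}$ one reads $h_{11}=-(f''z'-f'z'')/f'$, $h_{12}=c_1f'$, $h_{22}=ac_1+bc_2$. Inserting these into the formula for $K$ and splitting the resulting fraction gives precisely $(ac_1+bc_2)(f''z'-f'z'')/(b^2{f'}^3)+c_1^2/b^2$. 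For $H$, the numerator $g_{11}h_{22}-2g_{12}h_{12}+g_{22}h_{11}$ simplifies because the $ac_1$ contributions combine as $ac_1+bc_2-2ac_1=bc_2-ac_1$, leaving ${f'}^2(bc_2-ac_1)-(a^2-b^2)(f''z'-f'z'')/f'$; dividing by $2(-b^2{f'}^2)$ yields the stated $H$. Writing $B=b$ and $C=c_1$ packages both results in the claimed form.

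Then I would repeat the computation for the timelike branch $\alpha=(0,f,z)$. The only structural change is that now $g_{11}=-{f'}^2$ and $g_{12}=-bf'$, so the denominator becomes $-{f'}^2(a^2-b^2)-b^2{f'}^2=-a^2{f'}^2$, with $a^2$ replacing $b^2$, while $h_{12}=c_2f'$ and $h_{11},h_{22}$ keep the same shape. One checks that the numerator of $H$ again reduces to ${f'}^2(bc_2-ac_1)-(a^2-b^2)(f''z'-f'z'')/f'$ --- identical to the spacelike case --- so that the two branches differ only through the denominator. Setting $B=a$ and $C=c_2$ then reproduces exactly the same expressions, which is the unified statement. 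The role of the auxiliary constant $B$ is thus merely to record that $\det(g_{ij})=-b^2{f'}^2$ for the spacelike generating curve and $-a^2{f'}^2$ for the timelike one.

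The proof therefore involves no conceptual difficulty; the one point demanding care is sign bookkeeping. In the timelike branch the relative normal $N_h$ must be recomputed with the Lorentzian product $\langle\cdot,\cdot\rangle_1$ and $\times_1$ rather than naively reusing the spacelike expression, and one must propagate the sign flips in $g_{11}$ and $g_{12}$ so that the cross term $-2g_{12}h_{12}$ in the mean-curvature numerator still cancels against $g_{11}h_{22}$ to leave $bc_2-ac_1$. Once these signs are tracked, both formulas follow by the same elementary algebra used in the $\mathbb{I}^3$ case.
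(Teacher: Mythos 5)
Your proposal is correct and follows essentially the same route as the paper: the paper records $\mathrm{I}$, the relative normal $N_h$, and $\mathrm{II}$ for both the spacelike and timelike branches immediately before the proposition, and the curvatures then follow by exactly the substitution into the master formulas that you carry out (your denominators $-b^2f'^2$ and $-a^2f'^2$ agree with the paper's $\det g_{ij}$ computations in Proposition \ref{prop::PseudoIsoAdmissSurfForGrenzebewegung}). Your explicit algebra, including the cancellation $ac_1+bc_2-2ac_1=bc_2-ac_1$ (resp. $-(ac_1+bc_2)+2bc_2=bc_2-ac_1$) in the mean-curvature numerator, simply fills in the steps the paper leaves implicit.
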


\begin{corollary}
The Gaussian and mean curvatures of an admissible pseudo-isotropic warped translation surface of ni-type are 
\begin{equation}
K = \left(\frac{c_1x'+c_2y'}{bx'-ay'}\right)^2,
\end{equation}
and
\begin{equation}
H = -\frac{(a^2-b^2)(c_1x+c_2y)(x''y'-x'y'')}{2(bx'-ay')^3}+\frac{(ax'-by')(c_1x'+c_2y')}{(bx'-ay')^2},
\end{equation}
respectively; while for the i-type they are respectively
\begin{equation}
K=\left(\frac{C}{B}\right)^2
\mbox{ and }
H = -\frac{bc_2-ac_1}{2B^2}+\frac{(a^2-b^2)}{2B^2}\frac{(f''z'-f'z'')}{f'\,^3},
\end{equation}
where $C=c_1$ and $B=b$, if the generating curve is $\alpha(u)=(f(u),0,z(u))$ ($\alpha$ spacelike), and $C=c_2$ and $B=a$, if  $\alpha(u)=(0,f(u),z(u))$ ($\alpha$ timelike).
\end{corollary}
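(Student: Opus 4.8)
The plan is to derive the warped translation formulas as the specialization of the parabolic revolution results established in the two Propositions immediately preceding this Corollary, rather than recomputing any fundamental forms. In the classification of Subsect.~\ref{subsec::PseudoInvSurf}, the type (IV) surfaces are exactly the parabolic revolution surfaces (type (III)) subject to the constraints $c=0$ and $ac_1+bc_2=0$, while keeping $(a,b)\neq(0,0)$ and $(c_1,c_2)\neq(0,0)$. Admissibility carries over verbatim, since Proposition~\ref{prop::PseudoIsoAdmissSurfForGrenzebewegung} already guarantees, under the stated hypotheses on $\alpha$, that the denominators $bx'-ay'$, $f'$, and $B\in\{a,b\}$ do not vanish.

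For the ni-type I would substitute the two constraints into the parabolic revolution formulas. In the expression for $K$, the summand carrying the factor $(ac_1+bc_2)$ vanishes identically, leaving $K=\bigl((c_1x'+c_2y')/(bx'-ay')\bigr)^2$. In the expression for $H$, setting $c=0$ collapses $c+c_1x+c_2y$ to $c_1x+c_2y$ in the leading term, and the trailing term---again proportional to $(ac_1+bc_2)$---drops out, producing the asserted formula. The i-type $K$ is handled identically: the summand proportional to $(ac_1+bc_2)$ vanishes and one is left with $K=(C/B)^2$, with the assignments $(C,B)=(c_1,b)$ or $(c_2,a)$ inherited unchanged from the parent Proposition according to the spacelike or timelike character of $\alpha$.

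The only point requiring genuine attention---and the single place where a routine substitution could go wrong---is the i-type mean curvature. Here the parabolic revolution formula already carries no $c$-dependence, and its sole parameter combination is $bc_2-ac_1$, which is \emph{algebraically independent} of the warped-translation constraint $ac_1+bc_2=0$. One must therefore resist the temptation to set it to zero: the term $-(bc_2-ac_1)/(2B^2)$ survives, and $H$ is transcribed verbatim from the parabolic case. Noting this, all four formulas follow by direct algebra, exactly paralleling the simply isotropic Corollary obtained from Proposition~\ref{prop::SimplyIsoAdmissSurfForGrenzebewegung}, which completes the proof.
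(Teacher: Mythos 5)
Your proposal is correct and takes essentially the same route as the paper: the paper obtains this Corollary precisely by setting $c=0$ and $ac_1+bc_2=0$ in the immediately preceding Propositions for parabolic revolution surfaces of ni- and i-type (just as it does explicitly in the simply isotropic case). Your observation that the combination $bc_2-ac_1$ is not forced to vanish by the constraint $ac_1+bc_2=0$ (indeed it equals $-2ac_1$ under that constraint) and therefore survives in the i-type mean curvature is exactly the point at which a careless substitution would go wrong, and you handle it correctly.
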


\subsubsection{Pseudo-isotropic parabolic revolution surfaces with prescribed curvature}

As in $\mathbb{I}^3$, for parabolic revolution surfaces of i-type we can solve both  prescribed Gaussian and mean curvatures problems. The proofs of the two theorems below are completely analogous to those of Theorems \ref{thr::SimplyPrescKParaSurfiType} and \ref{thr::SimplyPrescHParaSurfiType}, respectively.

\begin{thm}\label{thr::PseudoPrescKParaSurfiType}
Let $K(s)$ be continuous. There exists a 2-parameter family of pseudo-isotropic parabolic revolution surfaces of i-type with $K$ as the Gaussian curvature and whose generating curves $\alpha_{k_0,k_1}(s)$,   $s$ is an arc-length parameter, satisfy
\begin{equation}
 z(s) = k_0 +k_1s+\frac{c_1^2\,s^2}{2(ac_1+bc_2)}-\frac{B^2}{ac_1+bc_2}\int_{s_0}^{s}\int_{s_0}^vK(w)\,\rmd w\,\rmd v,
\end{equation}
where $k_i$ ($i=1,2$) is a constant depending on the values of $z,z'$ at $s=s_0$, $B=b$ if $\alpha_{h_0,h_1}(s)=(s,0,z(s))$, and $B=a$ if $\alpha_{h_0,h_1}(s)=(0,s,z(s))$.
\end{thm}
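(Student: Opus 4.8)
The plan is to mimic the proof of Theorem \ref{thr::SimplyPrescKParaSurfiType}: reduce the prescribed-curvature condition to a second-order ODE for the height function $z(s)$ and integrate it twice. First I would fix an arc-length parameterization of the generating curve. In the spacelike case $\alpha(s)=(f(s),0,z(s))$ the induced seminorm gives $\langle\alpha',\alpha'\rangle_{pz}=(f')^2$, so arc-length forces $f'=1$; in the timelike case $\alpha(s)=(0,f(s),z(s))$ we get $\langle\alpha',\alpha'\rangle_{pz}=-(f')^2$ and again $f'=1$. In either situation we may take $f(s)=s$, whence $f'=1$ and $f''=0$.

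Next I would substitute this specialization into the Gaussian-curvature formula for i-type parabolic revolution surfaces already derived above, namely
\[
K = \frac{(ac_1+bc_2)(f''z'-f'z'')}{B^2 f'^{\,3}}+\frac{C^2}{B^2},
\]
with $(C,B)=(c_1,b)$ in the spacelike case and $(C,B)=(c_2,a)$ in the timelike one. Using $f''z'-f'z''=-z''$ and $f'^{\,3}=1$, this collapses to $K=\bigl(C^2-(ac_1+bc_2)z''\bigr)/B^2$, i.e.
\[
z''=\frac{C^2}{ac_1+bc_2}-\frac{B^2}{ac_1+bc_2}\,K(s).
\]
Since $K$ is continuous, the right-hand side is continuous and integrates directly. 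A first integration from $s_0$ produces $z'(s)$ up to an additive constant $k_1$, and a second integration produces $z(s)$ up to a further constant $k_0$: the iterated integral of $K$ supplies the term $-\tfrac{B^2}{ac_1+bc_2}\int_{s_0}^{s}\int_{s_0}^{v}K(w)\,\rmd w\,\rmd v$, the constant $C^2$ term yields the quadratic $C^2 s^2/[2(ac_1+bc_2)]$ (which in the spacelike case $C=c_1$ is exactly the displayed coefficient $c_1^2$), and $k_0,k_1$ encode the initial data $z(s_0),z'(s_0)$. This exhibits the claimed 2-parameter family.

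I do not anticipate a genuine obstacle here: the entire content is the curvature formula, which is already in hand, so the only care needed is the arc-length reduction $f''z'-f'z''=-z''$ and the correct pairing of $(C,B)$ with the two causal characters. The admissibility hypothesis, which forces $ac_1+bc_2\neq0$ (so that the division above is legitimate), is precisely what guarantees the ODE is nondegenerate; the proof of Theorem \ref{thr::PseudoPrescHParaSurfiType} for the mean curvature proceeds identically, starting instead from the $H$-formula of the preceding Proposition.
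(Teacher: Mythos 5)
Your proposal is correct and is essentially the paper's own proof: the paper proves this theorem by declaring it ``completely analogous'' to Theorem \ref{thr::SimplyPrescKParaSurfiType}, whose proof is exactly your argument --- take $f(s)=s$ by arc-length, substitute into the curvature formula to get a linear ODE $z''=\frac{C^2}{ac_1+bc_2}-\frac{B^2}{ac_1+bc_2}K(s)$, and integrate twice. Your observation that the quadratic coefficient is really $C^2$ (so $c_2^2$, not $c_1^2$, in the timelike case) correctly identifies a small slip in the theorem's statement; note only that $ac_1+bc_2\neq0$ comes from the definition of parabolic (type III) motions rather than from admissibility, which instead gives $B\neq0$.
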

\begin{thm}\label{thr::PseudoPrescHParaSurfiType}
Let $H(s)$ be continuous. There exists a 2-parameter family of pseudo-isotropic parabolic revolution surfaces of i-type with $H$ as the mean curvature and whose generating curves $\alpha_{h_0,h_1}(s)$, $s>0$ is an arc-length parameter,  satisfy
\begin{equation}
 z(s) = h_0 +h_1s+\frac{ac_1-bc_2}{2(a^2-b^2)}\,s^2-\frac{2B^2}{a^2-b^2}\int_{s_0}^{s}\int_{s_0}^vH(w)\,\rmd w\,\rmd v,
\end{equation}
where $h_i$ ($i=1,2$) is a constant depending on the values of $z,z'$ at $s=s_0$, $B=b$ if $\alpha_{h_0,h_1}(s)=(s,0,z(s))$, and $B=a$ if $\alpha_{h_0,h_1}(s)=(0,s,z(s))$. 
\end{thm}

\begin{example}[Constant curvature parabolic revolution surfaces]
Setting $K(s)=K_0$ in Theorem \ref{thr::PseudoPrescKParaSurfiType} or $H(s)=H_0$ in Theorem \ref{thr::PseudoPrescHParaSurfiType} gives, for some $z_0$ and $z_1$, surfaces with constant Gaussian or mean curvatures with generating curve respectively given by
$$z(s)=z_0+z_1s+\frac{c_1^2-B^2K_0}{2(ac_1+bc_2)}\,s^2\mbox{ or }z(s)=z_0+z_1s+\frac{ac_1-bc_2-2B^2H_0}{2(a^2-b^2)}\,s^2.$$
\end{example}

\section*{Acknowledgements} 
The Author would like to thank Muhittin E. Aydin and Rafael L\'opez for useful discussions  and also the Staff from the Departamento de Matem\'atica at the Universidade Federal de Pernambuco (Recife, Brazil) where part of this research was done while the Author worked as a temporary lecturer.

\end{document}